\newlength\cellsize \setlength\cellsize{18\unitlength}
\newcommand\cellify[1]{\def\thearg{#1}\def\nothing{}%
\ifx\thearg\nothing
\vrule width0pt height\cellsize depth0pt\else
\hbox to 0pt{\usebox2\hss}\fi%
\vbox to 18\unitlength{
\vss
\hbox to 18\unitlength{\hss$#1$\hss}
\vss}}
\newcommand\tableau[1]{\vtop{\let\\=\cr
\setlength\baselineskip{-16000pt}
\setlength\lineskiplimit{16000pt}
\setlength\lineskip{0pt}
\halign{&\cellify{##}\cr#1\crcr}}}
\newcommand\expath[1]{%
\hbox to 0pt{\usebox3\hss}%
\vbox to 15\unitlength{
\vss
\hbox to 15\unitlength{\hss$#1$\hss}
\vss}}
\numberwithin{equation}{section}
\theoremstyle{definition}
  \newtheorem{theorem}{Theorem}[section]
  \newtheorem{proposition}[theorem]{Proposition}
  \newtheorem{claim}[theorem]{Claim}
  \newtheorem {corollary}[theorem]{Corollary}
  \newtheorem{example}[theorem]{Example}
\theoremstyle{remark}
 \newtheorem{remark}[theorem]{Remark}
\newcommand{\n}{{n}}  
\newcommand{\m}{{m}}  
\newcommand{\W}{{W}}
\newcommand{\Cn}{{C_\n}}
\newcommand{\C}{{\mathcal C}} 
\newcommand{\Afund}{{\mathcal A}_0}
\newcommand{\A}{{\mathcal A}}
\newcommand{\Hak}[2]{{H}_{#1, #2}}
\newcommand{\Hp}[2]{{{\Hak{#1}{#2}}^+}}
\newcommand{\Hpm}[2]{{{\Hak{#1}{#2}}^\pm}}
\newcommand{\Hn}[2]{{{\Hak{#1}{#2}}^-}}
\newcommand{\Hyp}[1]{{\mathcal H}_{#1}}
\newcommand{\roots}{{\Delta}}
\newcommand{\pos}{{\roots^+}}
\newcommand{\negv}{{\roots^-}}
\newcommand{\Inv}{{\mathrm {Inv}}}
\newcommand{\e}{{\varepsilon}}  
\newcommand{\affroots}{{\widetilde \roots}}
\newcommand{\affpos}{{\widetilde \pos}}
\newcommand{\affneg}{{\widetilde \negv}}
\newcommand{\affPi}{{\widetilde \Pi}}
\newcommand{\omitt}[1]{}
\newcommand{\wt}{{\mathrm wt}}
\newcommand{\Region}{{\mathfrak A}_\m}
\newcommand{\R}{{\mathbb R}}
\newcommand{\Z}{{\mathbb Z}}
\def\aff{\widehat}
\def\S{{{\mathcal S}}}
\def\Sn{{\S_\n}}
\def\affS{\aff \Sn}
\def\m{{m}}
\newcommand{\brac}[2]{{\langle #1\mid#2\rangle}}
\def\mod{{\rm mod\,}}
\def\*{\mathop{\otimes}}
\begin{document}
\author[S.~Fishel]{Susanna Fishel}
\author[M.~Vazirani]{Monica Vazirani}
\address[M.~Vazirani]{UC Davis\\
Department of Mathematics\\
One Shields Ave\\
Davis, CA 95616-8633 }
\address[S.~Fishel]{
Department of Mathematics and Statistics\\
Arizona State University\\
P.O. Box 871804\\
Tempe, AZ 85287-1804
}
\thanks{Both authors wish to
thank AIM and the SQuaREs program where this work was started.}
\title[Bijection from Shi regions to
cores]{A bijection between dominant Shi regions and core partitions}

\date{\today}
\begin{abstract}
It is well-known that Catalan numbers $C_\n = \frac{1}{\n+1} \binom{2\n}{\n}$
count the number of dominant regions in the Shi arrangement of type $A$,
and that
they also count partitions which are both $\n$-cores as well as $(\n+1)$-cores.
These concepts have natural extensions, which we call
here  the $\m$-Catalan numbers and $\m$-Shi arrangement.
In this paper, we construct a bijection between dominant
regions of the $\m$-Shi arrangement and partitions which are
both  $\n$-cores as well as $(\m\n+1)$-cores.
The bijection is natural in the sense that it commutes with the
action of the affine symmetric group.
\end{abstract}
\maketitle


\section{Introduction }
\label{sec-intro}

In this paper, we build on the work of Anderson \cite{Anderson} to
give a direct bijection between dominant regions in the ``extended''
$\m$-Shi arrangement
of type $A_{\n-1}$ and partitions that are simultaneously $\n$-cores
and $(\m\n+1)$-cores.

Anderson's result, that
$\frac{1}{s+t} \binom {s+t}{t}$
counts
the number of partitions which are
both $t$-cores and $s$-cores 
generalizes the
well-known interpretation of Catalan numbers $\Cn$ as counting
$\n$-cores that are also $(\n+1)$-cores.
Catalan numbers are also known to count dominant regions in the
Shi arrangement, termed here the  $1$-Shi arrangement. 
(See \cite{Stanley-ec, Stanley-catalan}.)
Our result provides a direct bijective proof between 
two more general sets of  combinatorial objects
counted by the higher Catalan numbers, called here the
$\m$-Catalan numbers.

Our bijection is $\W$-equivariant in the following sense.
In each connected component of the $\m$-Shi hyperplane arrangement 
of type $A_{\n-1}$, there is exactly one ``representative,''  or $\m$-minimal,
alcove closest to the fundamental
alcove $\Afund$. 
Since
 the affine Weyl group
$\W$ acts freely and transitively on the set of alcoves,
there is a natural way to associate an element $w \in \W = \affS$
to any alcove $w^{-1} \Afund$, and to
 this one in particular. 
There is also a natural action of $\affS$ on partitions, whereby the
orbit of the empty partition $\emptyset$ is precisely the $\n$-cores. 
We will show that $w \emptyset$ is also an $(\m\n+1)$-core
and that all such $(\m\n + 1)$-cores that are also $\n$-cores
can be obtained this way. 

Roughly speaking, to each $\n$-core  $\lambda$ we can associate
an integer  vector $\vec n (\lambda)$ whose entries sum to zero.
When $\lambda$ is also an $(\m\n + 1)$-core, these entries satisfy certain
inequalities. 
On the other hand, these are precisely the inequalities that
describe when a dominant alcove is $\m$-minimal.

 Similar techniques can
be used to show the $\n$-cores which are also $(\m\n-1)$-cores are in bijection
with the {\it bounded\/} dominant regions in the $\m$-Shi arrangement.  This observation,
which we develop in \cite{FV},  was pointed out by Nathan Reading  whom we wish to thank.

 The article is organized as follows. In Section \ref{sec-Shi} we review
facts about Coxeter groups and root systems of type $A$.
Sections \ref{sec-inversion}
and \ref{sec-minimal} 
explain how  the position of $w^{-1} \Afund$ relative to our system of
affine hyperplanes is captured by the action of $w$ on affine roots
and that $\m$-minimality can be  expressed by certain inequalities on
the entries of $w(0,0,\ldots,0)$. 
In Section \ref{sec-abacus}  we review facts about
core partitions and  in particular remind the reader how to
associate an element of the root lattice to each core.
Our main theorem,
the bijection between dominant regions of the $\m$-Shi arrangement and special
cores, is in Section \ref{sec-bij}.
Section \ref{sec-inverse}  describes a related bijection on alcoves.
In Section \ref{sec-Narayana},
we derive further results refining our bijection between alcoves and
cores that involve Narayana  numbers.   Here we also observe that  an $\n$-core
is also an $(\m\n+1)$-core when it has $\le m$ removable boxes of any fixed residue.


\section{The type $A$ root system, Shi arrangement, and Weyl group}
\label{sec-Shi}

\subsection{Notation}
Let $\{\e_1, \ldots, \e_\n \}$ be the standard basis of $\R^n$ and $\brac{\,}{\,}$
be the  bilinear form for which this is an orthonormal basis.
Let $\alpha_i = \e_i - \e_{i+1}$. Then $\Pi = \{\alpha_1, \ldots, \alpha_{\n-1} \}$
is a basis of $V =  \{ (a_1, \ldots, a_\n ) \in \R^n \mid \sum_{i=1}^n a_i =0 \}$.
We let $Q =  Q^{(\n-1)} = \bigoplus_{i=1}^{n-1} \Z \alpha_i$ be identified 
with the root lattice of type $A_{\n-1}$.
The elements of $\roots = \{\e_i - \e_j \mid i \neq j\}$ are called roots
and we say a root $\alpha$ is positive, written $\alpha > 0$, if $\alpha \in
\pos = \{\e_i - \e_j \mid i < j\}$. We let $\negv = - \pos$ and say $\alpha < 0$
if $\alpha \in \negv$. Then $\Pi$ is the set of simple roots. 

We define a system of affine hyperplanes
$$\Hak{\alpha}{k}  = \{ v \in V \mid \brac{v }{\alpha} = k \}$$
given $\alpha \in \roots, k \in \Z$. Note $\Hak{-\alpha}{-k}  =\Hak{\alpha}{k}  $
so we usually take $k \in \Z_{\ge 0}$. 

The  extended Shi arrangement $\mathcal{S}^\m_\n$,
here called the  {\it $\m$-Shi arrangement},
is 
the  collection of hyperplanes $\Hyp{\m} = 
\{ \Hak{\alpha}{k} \mid \alpha \in \pos, -m < k \le m \}$.
This
   arrangement can be defined for all types; here we are concerned
   with type $A$.

We denote the (closed) half spaces
$\Hp{\alpha}{k} = \{ v \in V \mid \brac{v}{\alpha} \ge k \}$ and 
$\Hn{\alpha}{k} = \{ v \in V \mid \brac{v}{\alpha} \le k \}$.
Then the {\it dominant} chamber of $V$ is 
$\bigcap_{i=1}^{n-1} \Hp{\alpha_i}{0}$ (also referred to as the
fundamental chamber in the literature).
In this paper, we are primarily concerned with the connected components of
the hyperplane arrangement complement
$V \setminus \bigcup_{H \in \Hyp{\m}} H$
and in particular the connected
components in the dominant chamber.
For ease of notation we will refer to these as (dominant) regions of
the $\m$-Shi arrangement.

The extended Shi arrangement was defined by Stanley in
\cite{S1998}. The arrangement $\mathcal{S}^1_\n=\mathcal{S}_\n$ is known
as the Shi arrangement and was first considered by Shi \cite{Shi1986,
  Shi1987} and later by Headley \cite{H1994,H1994b}.  The extended Shi
arrangement was further studied in, for example,
\cite{AT2006,AtL1999,A2004,A2005a,PS2000}. In \cite{A2005a},
Athanasiades gave  formulas for the number of dominant
regions of the $\m$-Shi arrangement  $\mathcal{S}^\m_\n$ in
any type.

Each connected component of
$V \setminus \bigcup_{\stackrel{\alpha \in \pos}{k \in \Z}} \Hak{\alpha}{k}$
is called an alcove and the {\it fundamental alcove} is
$\Afund =$ the interior of $ \Hn{\theta}{1}  \cap \bigcap_{i=1}^{n-1} \Hp{\alpha_i}{0}$, 
where $\theta = \alpha_1 + \cdots + \alpha_{\n-1} = \e_1 - \e_\n$.

\subsection{The affine symmetric group}
The affine symmetric group $\affS$ acts on $V$ (preserving $Q$)
via affine linear transformations,
 and acts freely and transitively on the set of alcoves. 
We thus identify each alcove $\A$ with the unique $w \in \affS$ such that 
$\A = w \Afund$. 
\begin{align*}
\affS = \langle s_1, \ldots, s_{\n-1}, s_0 \mid
s_i^2 = 1, \quad  & s_i s_j = s_j s_i \text{ if } i \not\equiv j \pm 1 \mod n, \\
& 
 s_i s_j  s_i = s_j s_i s_j \text{ if } i \equiv j \pm 1 \mod n \rangle
\end{align*}
for $n > 2$, but ${\aff \S_2} = \langle s_1, s_0  \mid s_i^2 = 1 \rangle$. 
Each simple generator $s_i$ acts by reflection
with respect to the simple root $\alpha_i$.
(The simple root $\alpha_0$ is discussed below; note $s_0$ acts as reflection
over an affine hyperplane of $V$.)
More specifically, 
the action is given by 
\begin{gather*}
s_i   (a_1, \ldots, a_i, a_{i+1}, \ldots, a_\n) 
= (a_1, \ldots,  a_{i+1}, a_i,\ldots, a_\n) \quad
\text{ for $i \neq 0$, and} \\
s_0   (a_1, \ldots,  a_\n) 
= (a_\n +1, a_2 , \ldots,  a_{\n-1},  a_1 - 1).
\end{gather*}
Note $\Sn$ preserves $\brac{\;}{\;}$, but $\affS$ does not.

We note that $\affS$ contains a normal subgroup consisting of translations
by elements of $Q$, $\{t_\gamma \mid \gamma \in Q\}$, 
and that $\affS = \Sn \ltimes Q$. 
In terms of the coordinates above, if
$\gamma = (\gamma_1, \ldots, \gamma_\n) \in Q \subseteq V$
then $t_\gamma(a_1, \ldots, a_\n) = (a_1 + \gamma_1, \ldots, a_\n + \gamma_\n)$.
Consequently, we may express any $w \in \affS$ as $w = u t_\gamma$
for unique $u \in \Sn$, $\gamma \in Q$, or equivalently
$w = t_{\gamma'} u$ where $\gamma' = u(\gamma)$. Note that $\gamma' =
w(0,\ldots, 0)$. 

The finite root system above extends to an affine root system
$\affroots = \{ k\delta + \alpha \mid k \in \Z, \alpha \in \roots \}
= \affpos \cup \affneg$ where
$\affpos =  \{ k\delta + \alpha \mid k \in \Z_{\ge 0}, \alpha \in \pos \}
  \cup \{ k\delta + \alpha \mid k \in \Z_{> 0}, \alpha \in \negv \}$,
$\affneg = - \affpos$, and the simple roots are now $\affPi = \Pi \cup \{ \alpha_0\}$.
Here $\delta$ is the imaginary root,
and $\alpha_0 = \delta - \theta$.
Again we write $\alpha > 0$ if $\alpha \in \affpos$, 
$\alpha < 0$ if $\alpha \in \affneg$. 

We can picture $V$ sitting in the span of $\affPi$ as an affine subspace;
whereas $\affS$ acts on the larger space linearly, it acts on $V$ by affine
linear transformations. 
 In most of this paper, we found it more useful
to work with $V$, its coordinate system, and affine hyperplanes.
However, we could have chosen to express everything in terms of
the span of $\affPi$, as we found useful to do in Section \ref{sec-inversion}. 
In terms of affine roots,
the action of the simple reflection  $s_i \in \affS$, $0 \le i < n$, is given by
\begin{gather}
\label{eqn;rootsi}
s_i (\gamma) = \gamma - \brac{\gamma}{ \alpha_i}\alpha_i,
\end{gather}
where we have extended $\brac{\,}{\,}$ as appropriate (given by the
Cartan matrix of type $A_{n-1}^{(1)}$).
We can also re-express the 
 action of translations as
\begin{gather}
\label{eqn;roottranslation}
t_\gamma (\alpha) = \alpha - \brac{\gamma}{ \alpha}\delta 
\end{gather}
for $ \alpha \in \affroots$, $\gamma$ in the integer span of $\affPi$ or indeed
$\gamma$ in the affine weight lattice.
Note, in the case $\gamma \in Q$, $\alpha \in \roots$, $u \in \Sn$,
expressions like $\brac{\gamma}{ u(\alpha)}$ are unambiguous as they agree in
either setting.

Notice that the length $\ell(w) = | \{\alpha > 0 \mid w(\alpha) < 0 \} |$
for $w \in \affS$ is just the minimal number of affine
hyperplanes separating $w \Afund$
from $\Afund$. 


\section{Inversion sets}
\label{sec-inversion}

Define
$\Inv(w) =
\{\alpha > 0 \mid w(\alpha) < 0 \}$, for $w \in \affS$.
While we could have defined $\Inv(w)$ in terms of the $\affS$ action on
$V$, it is more conventional to define it in term of the action on $\affroots$.
It is well known that $\ell(w) = | \Inv(w) |$ and that this
number also counts the number of affine hyperplanes 
$\Hak{\alpha}{k}$ separating $\Afund$ from $w^{-1} \Afund$.
This section reminds the reader of the exact correspondence.

In this section
it is more convenient to express the $\affS$ action in terms of affine roots
than in terms of the coordinates of $V$,
because of how $\Inv(w)$ is defined. 

\begin{proposition}
\label{prop;inv}
Let $w \in \affS$ and  $\alpha + k \delta \in \affpos$, i.e. $k > 0, \alpha \in \roots$
or $k=0, \alpha \in \pos$.
Then
$\alpha + k \delta \in \Inv(w)$
iff
$w^{-1} \Afund \subseteq \Hp{-\alpha}{k}$
\end{proposition}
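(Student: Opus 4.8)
The plan is to attach to each affine root a single affine-linear functional on $V$ and to prove that this assignment is $\affS$-equivariant; the proposition then collapses to a one-line sign comparison. Concretely, to an affine root $\beta = k\delta + \alpha$ with $\alpha \in \roots$, $k \in \Z$, I attach the functional $f_\beta(v) = \brac{v}{\alpha} + k$ on $V$. Two features are immediate: the map $\beta \mapsto f_\beta$ is $\Z$-linear (with $f_\delta \equiv 1$), and the zero set of $f_\beta$ is the hyperplane $\Hak{\alpha}{-k} = \Hak{-\alpha}{k}$, with $\{v \mid f_\beta(v) \le 0\} = \Hp{-\alpha}{k}$. For the simple roots this recovers the walls of $\Afund$: $f_{\alpha_i}(v) = \brac{v}{\alpha_i}$ for $i \neq 0$ and $f_{\alpha_0}(v) = 1 - \brac{v}{\theta}$, so that $\Afund = \{v \mid f_{\alpha_i}(v) > 0 \text{ for all } 0 \le i < \n\}$. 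In this language the claim reads: $w^{-1}\Afund \subseteq \{v \mid f_\beta(v) \le 0\}$ iff $w\beta < 0$, and since $\beta > 0$ is given, $\beta \in \Inv(w)$ is the same as $w\beta < 0$.

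The key step is the equivariance $f_{w\beta} = f_\beta \circ w^{-1}$, i.e. $f_{w\beta}(v) = f_\beta(w^{-1}v)$ for all $w \in \affS$, $v \in V$. I would verify this on a generating set and extend it by the cocycle computation: if it holds for $w_1$ and $w_2$ then $f_{w_1 w_2 \beta}(v) = f_{w_2\beta}(w_1^{-1}v) = f_\beta(w_2^{-1}w_1^{-1}v) = f_\beta((w_1w_2)^{-1}v)$. Since $\affS = \Sn \ltimes Q$, it suffices to check the finite reflections $s_1, \dots, s_{\n-1}$ and the translations $t_\gamma$. For $s_i$ with $i \neq 0$, $s_i$ fixes $\delta$ and preserves $\brac{\,}{\,}$, so $f_{s_i\beta}(v) = \brac{v}{s_i\alpha} + k = \brac{s_i v}{\alpha} + k = f_\beta(s_i^{-1}v)$; for a translation, formula \eqref{eqn;roottranslation} gives $t_\gamma\beta = (k - \brac{\gamma}{\alpha})\delta + \alpha$, which matches $f_\beta(v - \gamma) = \brac{v}{\alpha} - \brac{\gamma}{\alpha} + k$. (If desired, the affine generator $s_0$ can be checked directly from $s_0 v = v - (\brac{v}{\theta} - 1)\theta$ and the action on $\alpha_0 = \delta - \theta$, but routing through $\Sn$ and $Q$ sidesteps the affine bookkeeping.)

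Finally I would record the sign of $f_\gamma$ on $\Afund$. Writing a positive affine root $\gamma$ as a non-negative integer combination of the simple affine roots and using $\Z$-linearity of $\beta \mapsto f_\beta$ together with $f_{\alpha_i} > 0$ on $\Afund$, I get $f_\gamma(u) > 0$ for every $u \in \Afund$ when $\gamma > 0$, and hence $f_\gamma(u) < 0$ for all $u \in \Afund$ when $\gamma < 0$; in either case the sign is constant and nonzero on $\Afund$. Now take $u \in \Afund$; by equivariance $f_\beta(w^{-1}u) = f_{w\beta}(u)$, and the right-hand side is negative for all $u$ exactly when $w\beta < 0$. Therefore $f_\beta$ is negative on all of $w^{-1}\Afund$ iff $w\beta < 0$, i.e. $w^{-1}\Afund \subseteq \Hp{-\alpha}{k}$ iff $w\beta < 0$ iff $\beta \in \Inv(w)$, which is the assertion. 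The step I expect to be most delicate is not any single calculation but keeping the sign and side conventions coherent — in particular the minus sign in $\Hp{-\alpha}{k}$ and the fact that $\Afund$ sits on the strictly positive side of each positive affine root — since one dropped sign in the dictionary $\beta \mapsto f_\beta$ would invert the final equivalence; the $s_0$ verification is the only place where the non-linearity of the action could introduce such an error, which is why I would handle it through $\Sn$ and $Q$.
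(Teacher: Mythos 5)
Your proof is correct, and it takes a genuinely different route from the paper's. The paper argues by brute force: it writes $w = t_\gamma u$ with $u \in \Sn$, $\gamma \in Q$, computes $w(\alpha + k\delta) = u(\alpha) + (k - \brac{\gamma}{u(\alpha)})\delta$ explicitly, and then splits into the cases $u(\alpha) \in \pos$ and $u(\alpha) \in \negv$, in each case manipulating the defining inequalities $0 \le \brac{w(v)}{\eta} \le 1$ of $\Afund$ by hand; the converse is dispatched with ``similar methods.'' You instead set up the dictionary $\beta = k\delta + \alpha \mapsto f_\beta = \brac{\,\cdot\,}{\alpha} + k$ between affine roots and affine-linear functionals, prove the equivariance $f_{w\beta} = f_\beta \circ w^{-1}$ once on the generators of $\Sn \ltimes Q$ (where it is a one-line check, and where routing around $s_0$ is a sensible way to avoid the one genuinely affine computation), and observe that $f_\gamma$ has constant sign on $\Afund$ determined by the sign of $\gamma$, using that positive affine roots are non-negative integer combinations of $\affPi$ and that the $f_{\alpha_i}$ are exactly the wall functionals of $\Afund$. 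What your approach buys: both directions of the ``iff'' fall out simultaneously from the constant-sign statement, there is no case split on the sign of $u(\alpha)$, and the equivariance lemma is reusable (it is essentially what Proposition \ref{prop;hyper} needs as well). What the paper's approach buys is that it stays entirely inside the coordinate conventions already introduced and never needs the expansion of positive affine roots in terms of simple ones. The one place to be careful in your write-up is exactly the place you flag: the identification $\{f_\beta \le 0\} = \Hp{-\alpha}{k}$ and the fact that $\Afund$ lies on the strictly positive side of every $f_{\alpha_i}$ --- both of which you state correctly --- so the argument is sound as written.
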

\begin{proof}
We first write $w = t_\gamma u$ for $\gamma \in Q$, $u \in \Sn$.
Note, in terms of affine roots, 
 $w(\alpha + k \delta) =  u(\alpha) + (k-\brac{\gamma}{u(\alpha)}) \delta$.
Assume $\alpha + k \delta \in \Inv(w)$ which 
 implies either $k-\brac{\gamma}{u(\alpha)} = 0,
u(\alpha) \in \negv$ or  $k-\brac{\gamma}{u(\alpha)} < 0$.

On the other hand, consider $v \in w^{-1} \Afund$, so that $w(v) \in \Afund$.
Then $$0\le \brac{w(v)}{\eta} \le 1 \quad \text{for all $\eta \in \pos$}$$
and so translating by $-\gamma$ we get
 $-\brac{\gamma}{\eta} \le \brac{u(v)}{\eta} \le 1-\brac{\gamma}{\eta}$.
First suppose $u(\alpha) \in \negv$ and let $\eta = - u(\alpha)$ for $k, \alpha$ as
above. 
Then in particular 
\begin{gather*}
k \le \brac{\gamma}{u(\alpha)}= -\brac{\gamma}{u(-\alpha)}
\le \brac{u(v)}{u(-\alpha)} = \brac{v}{-\alpha} 
\end{gather*}
so $v \in \Hp{-\alpha}{k}$.

If instead $u(\alpha) \in \pos$, we must have $k-\brac{\gamma}{u(\alpha)} \le -1$,
so taking $\eta = u(\alpha)$
\begin{gather*}
k \le -1+ \brac{\gamma}{u(\alpha)}
\le -\brac{u(v)}{u(\alpha)} = \brac{v}{-\alpha} 
\end{gather*}
so  again $v \in \Hp{-\alpha}{k}$.

The converse is straightforward; similar methods show that if
$w(\alpha + k \delta)  \in \affpos$ then 
$w^{-1} \Afund \subseteq \Hn{-\alpha}{k}$.

\end{proof}

\begin{figure}[!ht] 

\begin{tikzpicture}
\tikzstyle{every node}=[font=\footnotesize]
[4,1,2]

\path [draw = black, very thick, draw opacity = 1, shift = {(0,0)}]
(0,0) -- +(60:6) node[black,above] {$H_{\alpha_1,0}$};

\path [draw = black,very thick, draw opacity = 1] (0,0) -- +(6,0) node[black,right] {$H_{\alpha_2,0}$};

\path [draw = red,very thick,draw opacity = 1] (0,0) -- (0,0);

\path [draw = blue, very thick, draw opacity = 1, shift = {(1,0)}]
(0,0) -- +(60:6) node[black,above] {$H_{\alpha_1,1}$};

\path [draw = green,very thick, draw opacity = 1] (0.5,0.866025) -- +(6,0);
\draw [shift = {(0.5,0.866025)}] (6.5,0) node {$H_{\alpha_2,1}$};

\path [draw = red,very thick,draw opacity = 1] (0.5,0.866025) --
(1,0) node[black,below] {$H_{\theta,1}$};

\path [draw = blue, very thick, draw opacity = 1, shift = {(2,0)}]
(0,0) -- +(60:6) node[black,above] {$H_{\alpha_1,2}$};

\path [draw = gray, draw opacity = 1] (1,1.73205) -- +(6,0);
\draw [shift = {(1,1.73205)}] (6.5,0) node {$H_{\alpha_2,2}$};

\path [draw = red,very thick,draw opacity = 1] (1,1.73205) -- (2,0) node[black,below] {$H_{\theta,2}$};

\path [draw = gray, draw opacity = 1, shift = {(3,0)}]
(0,0) -- +(60:6) node[black,above] {$H_{\alpha_1,3}$};

\path [draw = gray, draw opacity = 1] (1.5,2.59808) -- +(6,0);

\path [draw = red,very thick,draw opacity = 1] (1.5,2.59808) -- (3,0) node[black,below] {$H_{\theta,3}$};

\path [draw = gray, , draw opacity = 1, shift = {(4,0)}]
(0,0) -- +(60:6);

\path [draw = gray,draw opacity = 1]  (2,3.4641) --
+(6,0);

\path [draw = red,very thick,draw opacity = 1] (2,3.4641) -- (4,0)
node[black,below, text = black, draw opacity = 1] {$H_{\theta,4}$};

\fill[fill=yellow, fill opacity=1.](0,0) circle (0.09cm);
\draw[black, fill opacity=1.](0,0) circle (0.11cm);

\fill [draw = white, draw opacity = 0, thick,fill = yellow,
shift={(3.5,0.866025)}](0,0) -- (60:1cm) -- (120:1cm) --(0,0)
node[green,fill opacity=1] at (0cm,0.8cm) {} node[black,fill
opacity=1, draw opacity = 1, inner sep = 2pt] at (0cm,0.6cm)
{${w^{-1}A_0}$} node[green,fill opacity=1] at (0cm,0.4cm) {};

\end{tikzpicture}
  \caption{\small 
When $w^{-1} \Afund \subseteq \Hp{\alpha}{k}$ for $\alpha \in \pos$
}
  \label{fig;hyper_inv}
\end{figure}
%
\begin{example}
\label{ex;hyp_inv_one}
\omitt{ the $3$-core $\lambda = (5,3,2,2,1,1) = w \emptyset$ for}
Consider
$w = s_1 s_2 s_0 s_1 s_2 s_1 s_0 $,
and let $\A = w^{-1} \Afund$, which is pictured in Figure \ref{fig;hyper_inv}.
Observe $w =s_2 t_{(-2,0,2)}$ and so $w^{-1} = t_{(2,0,-2)} s_2$.
Then
$$\A \subseteq {\color{Blue} \Hn{\alpha_1}{3}}
 \cap {\color{Red} \Hp{\theta}{4} }
\cap {\color{LimeGreen} \Hn{\alpha_2}{2}}.$$
Note $w^{-1}(\alpha_0) = {\color{Blue} -\alpha_1 + 3 \delta}$,
$w^{-1}(\alpha_1) = {\color{Red}  \theta - 4 \delta}$,
$w^{-1}(\alpha_2) = {\color{LimeGreen} -\alpha_2+ 2 \delta}$.

Furthermore $\A \subseteq {\color{Blue} \Hp{\alpha_1}{2}}$ and hence also
$\A \subseteq {\color{Blue} \Hp{\alpha_1}{1}}$.
Likewise $\A \subseteq {\color{Red} \Hp{\theta}{k}}$ for
$1\le k \le 4$, and so on.

We observe this is captured by the following:
\begin{align*}
\Inv(w) = \{
&{\color{Blue}  -\alpha_1 + \delta, -\alpha_1 + 2\delta},
\\
&{\color{Red}  
 -\theta + \delta, -\theta + 2\delta, -\theta + 3\delta, -\theta + 4\delta},
\\
& {\color{LimeGreen} -\alpha_2 + \delta }  \}.
\end{align*}
\end{example}

\begin{corollary}
Suppose $w $ 
is a minimal length left coset representative
for
$ \affS/\Sn$.
Then
$\Inv(w)$ consists only of roots of the form
$-\alpha + k \delta$, $k \in \Z_{>0}, \alpha \in \pos$.
Further, if $-\alpha + k \delta \in \Inv(w)$ and $k>1$ then $-\alpha + (k-1) \delta
\in \Inv(w)$.
\end{corollary}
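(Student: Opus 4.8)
The plan is to deduce everything from Proposition~\ref{prop;inv} together with the standard description of minimal coset representatives. First I would record the parabolic fact that $w$ is a minimal length left coset representative for $\affS/\Sn$ precisely when $\ell(w s_i) > \ell(w)$ for every generator $s_i$ of $\Sn$, i.e.\ when $w(\alpha_i) > 0$ for all $i = 1, \dots, \n-1$. Applying Proposition~\ref{prop;inv} to each finite simple root $\alpha_i$ (the case $k=0$) then translates this into the geometric statement that $w^{-1}\Afund$ lies in the dominant chamber $\bigcap_{i=1}^{\n-1}\Hp{\alpha_i}{0}$. Since $w^{-1}\Afund$ is an open alcove, it meets none of the hyperplanes $\Hak{\beta}{0}$, so on it $\brac{v}{\alpha_i} > 0$ strictly, and hence $\brac{v}{\beta} > 0$ for every $\beta \in \pos$ (as each such $\beta$ is a nonnegative combination of the $\alpha_i$).

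For the first assertion I would take an arbitrary inversion and use Proposition~\ref{prop;inv} to constrain its shape. An affine positive root with positive finite part has the form $\alpha + k\delta$ with $\alpha \in \pos$ and $k \geq 0$; by the Proposition it lies in $\Inv(w)$ iff $w^{-1}\Afund \subseteq \Hp{-\alpha}{k}$, i.e.\ iff $\brac{v}{\alpha} \leq -k \leq 0$ throughout the alcove. This contradicts $\brac{v}{\alpha} > 0$, so no such root is an inversion. Every element of $\Inv(w)$ therefore has negative finite part, and to be a positive affine root it must have strictly positive $\delta$-coefficient; that is, it is of the form $-\alpha + k\delta$ with $\alpha \in \pos$, $k \in \Z_{>0}$, as claimed.

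For the descent property I would again invoke the Proposition: $-\alpha + k\delta \in \Inv(w)$ is equivalent to $w^{-1}\Afund \subseteq \Hp{\alpha}{k}$. Since $\Hp{\alpha}{k} \subseteq \Hp{\alpha}{k-1}$, we get $w^{-1}\Afund \subseteq \Hp{\alpha}{k-1}$, and when $k > 1$ the root $-\alpha + (k-1)\delta$ still has positive $\delta$-coefficient, hence is a positive affine root; Proposition~\ref{prop;inv} then gives $-\alpha + (k-1)\delta \in \Inv(w)$.

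The computations here are all routine once the Proposition is in hand, so I do not expect a serious obstacle. The one point requiring care is the strictness $\brac{v}{\alpha} > 0$ on the open alcove (rather than merely $\geq 0$), which is exactly what rules out inversions with finite part in $\pos$; this rests on the fact that an alcove avoids every hyperplane $\Hak{\beta}{0}$. Getting the left/right convention correct in the identification of minimal coset representatives is the only other place to be attentive. (Alternatively, one can avoid the geometry entirely: writing $w(\beta) = \eta + \ell\delta$ for $\beta \in \pos$, minimality forces $w(\beta) > 0$, whence $\ell \geq 0$, and a short sign analysis of $w(\pm\beta + k\delta)$ yields both claims directly.)
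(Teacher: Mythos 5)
Your proposal is correct and follows essentially the same route as the paper: both arguments reduce to the observation that $w^{-1}\Afund$ lies in the (open) dominant chamber, so by Proposition \ref{prop;inv} it can never lie in $\Hp{-\alpha}{k}$ for $\alpha\in\pos$, $k\ge 0$, which rules out inversions with positive finite part; and both use the nesting $\Hp{\alpha}{k}\subseteq\Hp{\alpha}{k-1}$ for the second claim. The only cosmetic difference is that you derive the dominant-chamber fact from Proposition \ref{prop;inv} applied to the finite simple roots, while the paper simply cites it as a standard property of minimal coset representatives.
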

\begin{proof}
Since $w^{-1}$ is a minimal length right coset representative,
 $w^{-1} \Afund$ is in the dominant chamber, and so in $\Hp{\alpha}{0}$ for 
any $\alpha \in \pos$, and in particular never in $\Hp{-\alpha}{k}$ for $k \ge 0$. 

For the second statement, note $\Hp{\alpha}{k} \subseteq \Hp{\alpha}{k-1}$ if $k>0$.
\end{proof}
 
We also remind the reader that when $w^{-1}$ is a minimal length right
coset representative for $ \affS/\Sn$, then we may write
 $w^{-1} = t_{\gamma'}u$ where
$u \in \Sn$ and 
$\gamma'$ is in the dominant chamber.

\section{$\m$-minimal alcoves}
\label{sec-minimal}
We can identify each connected component of the complement of the $\m$-Shi arrangement 
with the unique alcove $w \Afund$ contained in it such that $\ell(w)$ is smallest.
In this situation we will say the alcove $w \Afund $ is  $\m$-{\em minimal.}
Such alcoves are termed ``representative alcoves'' by Athanasiades.

The following proposition is useful.
For a given alcove, it characterizes the
affine hyperplanes containing its walls and which simple reflections
flip it over those walls (by the right action). 
It can be found in \cite{Shi} in slightly different notation.

\begin{proposition}
\label{prop;hyper}
Suppose $w \Afund \subseteq \Hp{\alpha}{k}$
but 
$w s_i \Afund \subseteq \Hn{\alpha}{k}$
\begin{enumerate}
\item
Then $w(\alpha_i) = \alpha - k \delta$.
\item 
Let $\beta =  w^{-1}  (0, \ldots , 0) \in V$. Then $\brac{\beta}{\alpha_i} = -k$.
\end{enumerate}
\end{proposition}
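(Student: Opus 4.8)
The plan is to establish (1) from the standard dictionary between affine roots and affine hyperplanes, and then to read off (2) from the semidirect-product decomposition $w = u\, t_\gamma$. Recall that to a positive affine root $\eta + c\delta$ one associates the hyperplane $\Hak{\eta}{-c}$, and that the reflection in this root acts on $V$ as the orthogonal reflection fixing $\Hak{\eta}{-c}$; in particular $\alpha - k\delta$ corresponds to $\Hak{\alpha}{k}$. The key structural input is the reflection identity $w s_i w^{-1} = s_{w(\alpha_i)}$, where $s_{w(\alpha_i)}$ denotes the reflection in the affine root $w(\alpha_i)$. Since $w s_i \Afund = (w s_i w^{-1})\, w\Afund = s_{w(\alpha_i)}\, w\Afund$, the alcoves $w\Afund$ and $w s_i \Afund$ are images of one another under $s_{w(\alpha_i)}$; being adjacent, they must share precisely the fixed hyperplane of this reflection as their common wall. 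By hypothesis that wall is $\Hak{\alpha}{k}$, so the dictionary forces $w(\alpha_i) = \pm(\alpha - k\delta)$, and only the sign remains to be determined.

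To fix the sign I would appeal to Proposition \ref{prop;inv} applied to $w^{-1}$: for a positive affine root $\eta + c\delta$ one has $\eta + c\delta \in \Inv(w^{-1})$ iff $w\Afund \subseteq \Hp{-\eta}{c}$. Whichever of $\pm(\alpha - k\delta)$ is the positive affine root, combining this characterization with the hypothesis $w\Afund \subseteq \Hp{\alpha}{k}$ shows in either case that $w^{-1}(\alpha - k\delta) > 0$; since $w(\alpha_i) = \pm(\alpha-k\delta)$ gives $w^{-1}(\alpha - k\delta) = \pm\alpha_i$ and $\alpha_i$ is positive, the sign must be $+$, i.e. $w(\alpha_i) = \alpha - k\delta$. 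I would sanity-check this against Example \ref{ex;hyp_inv_one}, where for instance $w^{-1}(\alpha_1) = \theta - 4\delta$ matches the alcove there lying in $\Hp{\theta}{4}$. This sign bookkeeping --- keeping straight the conventions tying $\affpos$, the half-spaces $\Hp{\alpha}{k}$ and $\Hn{\alpha}{k}$, and the root-to-hyperplane dictionary --- is the step I expect to be the main obstacle, since a careless choice can flip a sign or shift $k$ by one.

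For (2), take $i \neq 0$, so that $\alpha_i \in \roots \subseteq V$ and $\brac{\beta}{\alpha_i}$ is computed with the ambient form on $V$. Write $w = u\, t_\gamma$ with $u \in \Sn$ and $\gamma \in Q$. Using \eqref{eqn;roottranslation} together with the facts that $u$ fixes $\delta$ and permutes the finite roots, I would compute $w(\alpha_i) = u\, t_\gamma(\alpha_i) = u(\alpha_i) - \brac{\gamma}{\alpha_i}\delta$. Comparing the coefficient of $\delta$ with part (1) gives $\brac{\gamma}{\alpha_i} = k$ (and incidentally $u(\alpha_i) = \alpha$). Finally $w^{-1} = t_{-\gamma}\, u^{-1}$ and $u^{-1}$ fixes the origin, so $\beta = w^{-1}(0,\ldots,0) = -\gamma$, whence $\brac{\beta}{\alpha_i} = -\brac{\gamma}{\alpha_i} = -k$, as claimed. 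I would note that the pairing in (2) implicitly treats $\alpha_i$ as an element of $V$, so the clean finite-form assertion is the case $i \neq 0$; for $i = 0$ the symbol $\brac{\beta}{\alpha_0}$ must be read through the affine form, and this case should be flagged and checked separately.
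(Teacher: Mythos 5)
Your argument is correct, and its skeleton is the same as the paper's: both proofs identify $\Hak{\alpha}{k}$ as the image under the isometry $w$ of the wall $\Hak{\alpha_i}{0}$ separating $\Afund$ from $s_i\Afund$, and both deduce (2) by comparing the $\delta$-coefficient of $w(\alpha_i)$ against a semidirect-product decomposition of $w$. The differences are local. For the sign in (1), the paper simply observes that $w$ carries the closed half-space $\Hp{\alpha_i}{0}$ containing $\Afund$ to the half-space $\Hp{\alpha}{k}$ containing $w\Afund$, so $w(\Hpm{\alpha_i}{0}) = \Hpm{\alpha}{k}$ forces $w(\alpha_i) = \alpha - k\delta$ rather than its negative; you instead route the sign through Proposition \ref{prop;inv} applied to $w^{-1}$. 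Your route is valid --- in the case $\alpha - k\delta \in \affpos$ it needs the (true) observation that an open alcove lying in $\Hp{\alpha}{k}$ cannot also lie in $\Hn{\alpha}{k}$, so that $\alpha - k\delta \notin \Inv(w^{-1})$ --- but it is a heavier tool for the same bookkeeping. In (2) you use $w = u\,t_\gamma$ where the paper uses $w = t_\gamma u$; this is immaterial, and your version ($\beta = -\gamma$ and $\brac{\gamma}{\alpha_i}=k$ directly) is if anything cleaner. Your caveat about $i=0$ is well taken: the paper's computation is written uniformly in $i$ but silently uses the affine convention under which $\brac{\beta}{\alpha_0} = 1 - \brac{\beta}{\theta}$, which is exactly what makes the ``in particular'' clause of Corollary \ref{cor;minimal} read $\brac{\beta}{\theta} \le \m+1$ rather than $\le \m$. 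With that convention the $i=0$ case does close (one finds $\brac{\beta}{\theta} = k+1$, hence $\brac{\beta}{\alpha_0} = -k$, consistent with Proposition \ref{prop;narayana}), so flagging it is the right instinct, but to be complete you should carry that half-line of computation out rather than leave it as a check to be done.
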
 

\begin{proof}
We first note $\Hak{\alpha_i}{0}$ is the unique hyperplane such that
$\Afund \subseteq \Hp{\alpha_i}{0},$
$ s_i \Afund \subseteq \Hn{\alpha_i}{0}.$
In the case $i=0$ we rewrite this condition as 
$\Afund \subseteq \Hp{-\theta}{1}, 
s_0 \Afund \subseteq \Hn{-\theta}{1}.$

Because $w$ is an isometry, $w(\Hak{\alpha_i}{0})$ must be the unique
hyperplane separating $w \Afund$ from $w s_i \Afund$, and by hypothesis,
this hyperplane is $\Hak{\alpha}{k}$.
Then $w (\Hpm{\alpha_i}{0}) = \Hpm{\alpha}{k}$ implies $w (\alpha_i) = \alpha - k \delta$.

For the second statement, note that we can uniquely write $w = t_\gamma u$ with
$u \in \Sn, \gamma \in Q$ as $\affS = Q \ltimes \Sn$.
Then $\beta = w^{-1}   (0, \ldots, 0)  = u^{-1}(-\gamma )$ and
$\brac{u^{-1}(-\gamma)}{\alpha_i} = \brac{-\gamma}{u (\alpha_i)} =
-\brac{\gamma}{\alpha} = -k$ since
$w(\alpha_i) = t_\gamma(u(\alpha_i)) = u(\alpha_i) - \brac{\gamma}{u (\alpha_i)}\delta
= \alpha - k \delta$. 
\end{proof}

Using the coordinates of $V$, we note $k = \gamma_{u(i)} - \gamma_{u(i+1)}$.

\begin{remark}
\label{remark;min}
Note, if $w \Afund$ is  
$\m$-minimal, 
then whenever 
$k \in \Z_{\ge 0}$ and  $w \Afund \subseteq \Hp{\alpha}{k}$
but
$w s_i \Afund \subseteq \Hn{\alpha}{k}$
then we must have $k \le \m$ in the case $\alpha > 0$ and  $k \le m-1$ in the case
$\alpha < 0$.
\end{remark}
It is easy to see that the condition in Remark \ref{remark;min}
 is not only necessary
but sufficient to describe when $w \Afund$ is $\m$-minimal.
Together with Proposition \ref{prop;inv},  Proposition \ref{prop;hyper}
says
that when $\alpha_i \in \Inv(w)$, $w(\alpha_i) = \alpha - k \delta$
then $k \le \m$,
and for $\beta =  w^{-1}   (0, \ldots , 0)$ that $\brac{\beta}{\alpha_i} \ge
-\m$.

Applying
Remark \ref{remark;min}
to just positive $\alpha$ and alcoves in the dominant
chamber, we get the following corollary. 

\begin{corollary}
\label{cor;minimal}
Suppose $w \Afund$ is in the dominant chamber and $\m$-minimal. 
\begin{enumerate}
\item
If $w \Afund \subseteq \Hp{\alpha}{k}$
but
$w s_i \Afund \subseteq \Hn{\alpha}{k}$
for some $\alpha \in \pos, k \in \Z_{\ge 0}$, then $k \le \m$.
\item
Let $\beta =  w^{-1}   (0, \ldots , 0)$. Then $\brac{\beta}{\alpha_i} \ge -\m$,
for all $i$,
and in particular $\brac{\beta}{\theta} \le \m+1$.
\end{enumerate}
\end{corollary}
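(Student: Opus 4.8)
The plan is to read both statements off Proposition \ref{prop;hyper} and Remark \ref{remark;min}, applied to the walls of $w\Afund$ crossed by the right action of the simple reflections $s_i$. Throughout, write $\beta = w^{-1}(0,\ldots,0)$, and for each $i$ record the unique hyperplane separating $w\Afund$ from $ws_i\Afund$ as $\Hak{\alpha}{k}$, with the sign normalized so that $w\Afund \subseteq \Hp{\alpha}{k}$ and $ws_i\Afund \subseteq \Hn{\alpha}{k}$; this determines $(\alpha,k) \in \roots\times\Z$ uniquely and, by Proposition \ref{prop;hyper}, gives $w(\alpha_i) = \alpha - k\delta$. Part (1) then requires no work: its hypotheses ($w\Afund$ is $\m$-minimal, $\alpha \in \pos$, $k \in \Z_{\ge 0}$) are exactly those of the $\alpha > 0$ case of Remark \ref{remark;min}, which delivers $k \le \m$.

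For the inequalities $\brac{\beta}{\alpha_i} \ge -\m$, fix a finite index $i \in \{1,\ldots,\n-1\}$ and take $\Hak{\alpha}{k}$ as above. Proposition \ref{prop;hyper}(2) gives $\brac{\beta}{\alpha_i} = -k$, so it is enough to show $k \le \m$. If $k < 0$ this is clear since $\m \ge 1$. If $k \ge 0$, then Remark \ref{remark;min} applies and yields $k \le \m$ when $\alpha > 0$ and $k \le \m - 1 \le \m$ when $\alpha < 0$; in either case $k \le \m$, so $\brac{\beta}{\alpha_i} = -k \ge -\m$. The subtlety to flag is that, even though $w\Afund$ is dominant, the root $\alpha$ labelling a wall need not be positive, so both cases of Remark \ref{remark;min} are genuinely used, and the sign of $k$ must be split off because Remark \ref{remark;min} speaks only to $k \ge 0$.

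For the last assertion I would run the same argument at the affine wall $i = 0$, and here lies the one genuinely new step. Writing $w = t_\gamma u$ with $u \in \Sn$, $\gamma \in Q$, using that $\affS$ fixes $\delta$, and applying \eqref{eqn;roottranslation}, one computes $w(\alpha_0) = w(\delta - \theta) = (1 + \brac{\gamma}{u(\theta)})\delta - u(\theta)$; comparison with $w(\alpha_0) = \alpha - k\delta$ identifies $\alpha = -u(\theta)$ and $k = -1 - \brac{\gamma}{u(\theta)}$. Since $\beta = u^{-1}(-\gamma)$, this gives $\brac{\beta}{\theta} = -\brac{\gamma}{u(\theta)} = k + 1$, and bounding $k \le \m$ exactly as before (trivially if $k < 0$, by Remark \ref{remark;min} if $k \ge 0$) yields $\brac{\beta}{\theta} = k + 1 \le \m + 1$. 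I expect the main obstacle to be stating this shift cleanly: because $\alpha_0 = \delta - \theta$ has an imaginary part, the literal reading $\brac{\beta}{\alpha_0} = -k$ coming from Proposition \ref{prop;hyper}(2) (whose proof tacitly assumes $u(\alpha_i)$ has no $\delta$-component) must be corrected, and that correction is precisely what produces the extra $+1$ separating the $\theta$-bound from the $\alpha_i$-bounds.
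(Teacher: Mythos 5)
Your proof is correct and follows the same route as the paper: part (1) is read directly off Proposition \ref{prop;hyper} together with Remark \ref{remark;min}, and part (2) combines $\brac{\beta}{\alpha_i}=-k$ with the case split on the sign of $k$ (trivial for $k<0$, Remark \ref{remark;min} for $k\ge 0$, covering both signs of $\alpha$). The only place you go beyond the paper's terse proof is the explicit computation $w(\alpha_0)=(1+\brac{\gamma}{u(\theta)})\delta-u(\theta)$ giving $\brac{\beta}{\theta}=k+1$, which correctly fills in the $i=0$ shift that the paper leaves implicit (and records later in Proposition \ref{prop;narayana}).
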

\begin{proof}
The first statement follows directly from Proposition \ref{prop;hyper}
 and Remark \ref{remark;min}.
To conclude that the second statement holds for all $i$, note that if $k \le 0$
then automatically $k \le m$.
\end{proof}

\section{Core partitions and their  abacus diagrams}
\label{sec-abacus}

Here we review some well-known facts about $\n$-cores and review
the useful tool of the abacus construction.  Details can be
found in \cite{JamesKerber}.


There is a well-known bijection $\C : \{ \n \text{-cores} \} \to Q$  
that commutes with the action of $\affS$. 
One can use the $\affS$-action to define the bijection, or
describe it directly from the combinatorics of partitions
via the work of Garvan-Kim-Stanton's $\vec n$-vectors \cite{GKS;cranks}
or as described in terms of balanced abaci as in \cite{BJV}. 

Here, we will recall the description from \cite{BJV} as well as
remind the reader of the $\affS$-action on $\n$-cores.

We identify a partition $\lambda = (\lambda_1, \ldots, \lambda_r)$ with
its Young diagram, the array of boxes with coordinates $\{(i,j) \mid
1 \le j \le \lambda_i\}$.  We say the box $(i,j) \in \lambda$ has
{\it residue\/} $j-i \, \mod n$, and in that case, we often refer to it as
a $(j-i \,  \mod n)$-box.
Its {\it hook length\/} $h_{(i,j)}^\lambda$  is   $1 +$
the number of boxes to the right of and below $(i,j)$.

An {\it $n$-core\/} is a partition $\lambda$ such that $n \nmid h_{(i,j)}^\lambda$
for all $(i,j) \in \lambda$. 

We say a box is {\it removable\/} from $\lambda$ if its removal results in 
a partition.  Equivalently its hook length is $1$. 
A box not in $\lambda$ is {\it addable\/} if its union with $\lambda$ results
in  a partition. 

\begin{claim}
\label{claim;add-remove}
Let $\lambda$ be an $n$-core. Suppose $\lambda$ has a removable $i$-box. Then it has no
addable $i$-boxes.  Likewise, if $\lambda$ has an addable $i$-box it has no removable
$i$-boxes. 
\end{claim}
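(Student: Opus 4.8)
The plan is to pass to the abacus (equivalently, beta-number) encoding of $\lambda$ and reduce the statement to a one-line observation about beads and gaps. Writing $\lambda_k = 0$ for $k$ larger than the number of parts, set $B = \{\lambda_k - k \mid k \ge 1\} \subseteq \Z$; this is the set of ``bead'' positions, with all sufficiently negative integers lying in $B$ and all sufficiently positive ones in its complement (the ``gaps''). The two standard facts I will invoke, recorded in \cite{JamesKerber}, are: (i) the boxes of $\lambda$ are in bijection with pairs $(b,g)$ where $b \in B$, $g \notin B$, and $g < b$, the corresponding box having hook length $b-g$; and (ii) under the identification of position with content, a position $p$ corresponds to residue $p \bmod \n$. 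In particular $\lambda$ is an $\n$-core precisely when no pair $(b,g)$ as in (i) has $b - g \equiv 0 \pmod \n$.

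Next I would translate the two local conditions. A removable box is a corner, i.e.\ a box of hook length $1$; by (i) these correspond exactly to positions $c$ with a bead at $c$ and a gap at $c-1$, and by (ii) the residue of the box is $c \bmod \n$. Symmetrically, an addable box corresponds to a position $c'$ with a gap at $c'$ and a bead at $c'-1$, of residue $c' \bmod \n$. The second dictionary entry is just the first applied to the partition obtained by adding the box, so I would verify it once and reuse it. Thus a removable $i$-box produces a bead at some $c \equiv i$ with a gap at $c-1$, while an addable $i$-box produces a gap at some $c' \equiv i$ with a bead at $c'-1$.

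Assume for contradiction that both occur. Then $c \equiv c' \equiv i \pmod \n$, so $c - c'$ is a nonzero multiple of $\n$, and I would split on its sign. If $c > c'$, the pair (bead at $c$, gap at $c'$) is of the type in (i) and yields a box whose hook length $c - c'$ is a positive multiple of $\n$. If $c < c'$, the pair (bead at $c'-1$, gap at $c-1$) likewise yields a box of hook length $c' - c$, again a positive multiple of $\n$. Either way $\lambda$ has a hook length divisible by $\n$, contradicting that it is an $\n$-core. Since this rules out a simultaneous removable and addable $i$-box, its contrapositive delivers both assertions of the Claim at once.

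The only genuine work is setting up the dictionary correctly, and the main thing to get right is the bookkeeping of conventions: that abacus position equals content (so that residues match), and the precise local pictures for removable versus addable boxes and for the core condition. Once these are fixed the contradiction is immediate. The same argument can be phrased purely combinatorially by exhibiting the offending hook directly, without naming beads, but I find the abacus bookkeeping cleaner and less case-heavy.
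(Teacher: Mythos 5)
Your argument is correct, but it takes a different route from the paper. The paper's proof never leaves the Young diagram: given a removable $i$-box $(x,y)$ and an addable $i$-box $(X,Y)$, it observes that exactly one of $(x,Y)$ or $(X,y)$ lies in $\lambda$ and computes its hook length to be $|X-x+y-Y|$, which is divisible by $\n$ since $y-x\equiv Y-X \pmod \n$ --- a two-line contradiction with no auxiliary machinery. You instead pass to the beta-set $B=\{\lambda_k-k\}$ and use the standard dictionary (boxes $\leftrightarrow$ bead--gap pairs with hook length equal to the difference, removable/addable $i$-boxes $\leftrightarrow$ bead-above-gap or gap-above-bead at positions $\equiv i$), then exhibit the offending bead--gap pair in each of the two orderings of $c$ and $c'$. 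Your dictionary entries are all accurate (including the residue-equals-position convention and the edge case of the addable box in a new bottom row), and the case split is airtight since $c\ne c'$ forces $c-c'$ to be a \emph{nonzero} multiple of $\n$. What the paper's approach buys is brevity and self-containment; what yours buys is that the same beads-and-gaps setup is exactly the language the paper itself adopts in Section \ref{sec-abacus} for the $\vec n(\lambda)$ construction and Anderson's criterion, so your proof integrates naturally with the rest of the machinery, at the cost of having to import (or verify) the hook-length/beta-set bijection from \cite{JamesKerber}.
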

\begin{proof}
If $\lambda$ had both a removable $i$-box $(x,y)$ and an addable $i$-box $(X, Y)$,
then $\lambda$ also contains exactly one of $(x, Y)$ or $(X,y)$ and this box
has hook length $|X-x + y-Y|$ which is 
divisible by $n$, as $y-x \equiv Y-X \equiv i \, \mod n$. 
\end{proof}

$\affS$ acts transitively
on the set of $n$-cores as follows.  Let $\lambda$ be an $n$-core.
Then 
$$ s_i \lambda = \begin{cases}
\lambda \cup \text{all addable $i$-boxes} & \text{$\exists$ any addable $i$-box} \\
\lambda \setminus \text{all removable $i$-boxes} & \text{$\exists$ any removable $i$-box} \\
\lambda & \text{else.}
\end{cases}$$
It is easy to check $s_i \lambda$ is an $n$-core.

In fact $\affS$ acts on the set of all partitions, but this action is
slightly more complicated to describe
(see \cite{MisraMiwa} and Section 11 of \cite{Kashiwara}),
 and involves the combinatorics
of Kleshchev's ``good'' boxes. 
For those readers familiar with the realization of the basic crystal
$B(\Lambda_0)$ of ${\widehat {\mathfrak sl}_n}$ as having nodes 
parameterized by $n$-regular partitions,
$$ s_i \lambda = \begin{cases}
\tilde f_i^{\langle h_i, {\mathrm wt}(\lambda) \rangle}(\lambda)
& \langle h_i, {\mathrm wt}(\lambda) \rangle \ge 0 \\
\tilde e_i^{-\langle h_i, {\mathrm wt}(\lambda) \rangle}(\lambda)
& \langle h_i, {\mathrm wt}(\lambda) \rangle \le 0,
\end{cases}$$
where 
\begin{gather}
\label{eq-wtlambda}
{\mathrm wt}(\lambda) =
\Lambda_0 -  \sum_{(x,y) \in \lambda} \alpha_{y-x \, \mod n},
\end{gather}
and $h_i$ is the co-root corresponding to $\alpha_i$.  (Since we need not make
a distinction between roots and co-roots in type $A$, we could have simply 
substituted $\alpha_i$ for $h_i$ in the expressions above.)
Then the $n$-cores are exactly the $\affS$-orbit on the highest weight node,
which is the empty partition $\emptyset$. 

\subsection{Abacus diagrams}
We can associate to each partition $\lambda$ its abacus diagram.  
When $\lambda$ is an $n$-core, its abacus has a particularly nice form,
and then can be used to construct an element of $Q$. 
This gives us a bijection $\{ n\text{-cores} \} \to Q$  which commutes
with the action of $\affS$. 
We follow \cite{BJV} in describing this bijection,
which rests on the work of \cite{JamesKerber}, 
and we note
this bijection agrees with the $\vec n$-vector construction
of Garvan-Kim-Stanton \cite{GKS;cranks},

Each partition $\lambda = (\lambda_1, \dots, \lambda_r)$ is determined by its
hook lengths in the first column,  the $\beta_k = h_{(k,1)}^{\lambda}$.

An \textit{abacus diagram} is a diagram, with entries from $\Z$ arranged
in 
$\n$ columns labeled $0, 1, \dots, \n-1$, called \textit{runners}. 
The horizontal cross-sections or rows will be called \em levels \em and runner
$k$ contains  the entry labeled by $r \n + k$ on level $r$ where $-\infty < r < \infty$.
 We draw the abacus so that each runner is vertical, oriented with
$-\infty$ at the top and $\infty$ at the bottom, and we always 
put runner $0$ in the leftmost position,
increasing to runner $\n-1$ in the rightmost position.
 Entries in the abacus diagram may be circled; such circled elements are called
\textit{beads}. Entries which are not circled will be called \textit{gaps}. 
We shall say two abaci are equivalent if they differ by adding a constant
to all entries.  (Note, in this case we must cyclically permute the runners
so that runner $0$ is leftmost.)
See Example \ref{ex-abacus} below.

Given a partition $\lambda$  its abacus is any abacus diagram equivalent to
the one obtained by placing beads at 
entries $\beta_k = h_{(k,1)}^{\lambda}$ and all $j \in \Z_{< 0}$.

\begin{remark}
It is well-known that $\lambda$ is an $\n$-core if and only if its abacus is
\textit{flush}, that is to say whenever
there is a bead at entry $j$ there is also a bead at $j - \n$.
\end{remark}

We define the \textit{balance number} of an abacus to be the sum over all
runners of the largest level in that runner which contains a bead.
We say that an abacus is \textit{balanced} if its balance number is zero.
Note that there is a unique abacus which represents a given $\n$-core
$\lambda$ for each balance number.
In particular, there is a unique abacus of $\lambda$ with balance number $0$. 
The balance number for a set of $\beta$-numbers of $\lambda$ will increase by exactly
$1$ when we increase each $\beta$-number by $1$.
On the abacus picture, this corresponds to shifting all of the beads forward
one entry.  (Equivalently, we could add $1$ to each entry, leaving the beads
in place, after which we cyclically permute the runners so that runner $0$ is
leftmost. )

\begin{example}
\label{ex-abacus}
Both abaci below  represent the 4-core $\lambda =(5,2,1,1,1)$. 
The first one is balanced, but the second has balance number $1$.
The boxes of $\lambda$
have been filled with their hooklengths.

\begin{figure}[ht]

\begin{tikzpicture}[fill opacity=1,every to/.style={draw,dotted}]
\tikzstyle{every node}=[font=\footnotesize]
\path (0,0) to (0,4);
\path[shift = {(.75,0)}] (0,0) to (0,4);
\path[shift = {(1.5,0)}] (0,0) to (0,4);
\path[shift = {(2.25,0)}] (0,0) to (0,4);
\shade[shading=ball, ball color = Gainsboro] (0,1) circle (.25) node {8};
\shade[shading=ball, ball color = Gainsboro] (0,1.5) circle (.25) node
{4};
\shade[shading=ball, ball color = Gainsboro] (0,2) circle (.25) node {0};
\shade[shading=ball, ball color = Gainsboro] (0,2.5) circle (.25) node {-4};
\shade[shading=ball, ball color = Gainsboro] (0,3) circle (.25) node {-8};
\fill[fill=white] (.75,1) circle (.25) node {9};
\fill[fill=white] (.75,1.5) circle (.25) node
{5};
\shade[shading=ball, ball color = Gainsboro] (.75,2) circle (.25) node {1};
\shade[shading=ball, ball color = Gainsboro] (.75,2.5) circle (.25) node {-3};
\shade[shading=ball, ball color = Gainsboro] (.75,3) circle (.25) node {-7};

\fill[fill=white] (1.5,1) circle (.25) node {10};
\fill[fill=white] (1.5,1.5) circle (.25) node {6};
\shade[shading=ball, ball color = Gainsboro] (1.5,2) circle (.25) node {2};
\shade[shading=ball, ball color = Gainsboro] (1.5,2.5) circle (.25) node {-2};
\shade[shading=ball, ball color = Gainsboro] (1.5,3) circle (.25) node {-6};

\fill[fill=white] (2.25,1) circle (.25) node {11};
\fill[fill=white] (2.25,1.5) circle (.25) node
{7};
\fill[fill=white] (2.25,2) circle (.25) node {3};
\fill[fill=white] (2.25,2.5) circle (.25) node {-1};
\shade[shading=ball, ball color = Gainsboro] (2.25,3) circle (.25) node {-5};

\begin{scope}[shift={(4,0)}]
\path (0,0) to (0,4);
\path[shift = {(.75,0)}] (0,0) to (0,4);
\path[shift = {(1.5,0)}] (0,0) to (0,4);
\path[shift = {(2.25,0)}] (0,0) to (0,4);
\shade[shading=ball, ball color = Gainsboro] (.75,1) circle (.25) node {9};
\shade[shading=ball, ball color = Gainsboro] (.75,1.5) circle (.25) node{5};
\shade[shading=ball, ball color = Gainsboro] (.75,2) circle (.25) node {1};
\shade[shading=ball, ball color = Gainsboro] (.75,2.5) circle (.25) node {-3};
\shade[shading=ball, ball color = Gainsboro] (.75,3) circle (.25) node {-7};
\fill[fill=white] (1.5,1) circle (.25) node {10};
\fill[fill=white] (1.5,1.5) circle (.25) node{6};
\shade[shading=ball, ball color = Gainsboro] (1.5,2) circle (.25) node {2};
\shade[shading=ball, ball color = Gainsboro] (1.5,2.5) circle (.25) node {-2};
\shade[shading=ball, ball color = Gainsboro] (1.5,3) circle (.25) node {-6};

\fill[fill=white] (2.25,1) circle (.25) node {11};
\fill[fill=white] (2.25,1.5) circle (.25) node {7};
\shade[shading=ball, ball color = Gainsboro] (2.25,2) circle (.25) node {3};
\shade[shading=ball, ball color = Gainsboro] (2.25,2.5) circle (.25) node {-1};
\shade[shading=ball, ball color = Gainsboro] (2.25,3) circle (.25) node {-5};

\fill[fill=white] (0,1) circle (.25) node {8};
\fill[fill=white](0,1.5) circle (.25) node {4};
\fill[fill=white] (0,2) circle (.25) node {0};
 \shade[shading=ball, ball color = Gainsboro] (0,2.5) circle (.25) node {-4};
\shade[shading=ball, ball color = Gainsboro] (0,3) circle (.25) node {-8};
\end{scope}
\begin{scope}[shift={(3,-.5)}]
\node[shift = {(-1,-1)}] {$\lambda =$};
\def\boxpath{-- +(-0.5,0) -- +(-0.5,-0.5) -- +(0,-0.5) -- cycle};
\draw[step =0.5,shift={(0,0)}, thick](0,-0.5) \boxpath node[anchor= south east ]{9};
\draw[step =0.5,shift={(0,0)}, thick](0.5,-0.5) \boxpath node[anchor= south east ]{5};
\draw[step =0.5,shift={(0,0)}, thick](1,-0.5) \boxpath node[anchor= south east ]{3};
\draw[step =0.5,shift={(0,0)}, thick](1.5,-0.5) \boxpath node[anchor= south east ]{2};
\draw[step =0.5,shift={(0,0)}, thick](2,-0.5) \boxpath node[anchor= south east ]{1};
\draw[step =0.5,shift={(0,0)}, thick](0,-1) \boxpath node[anchor= south east ]{5};
\draw[step =0.5,shift={(0,0)}, thick](0.5,-1) \boxpath node[anchor= south east ]{1};
\draw[step =0.5,shift={(0,0)}, thick](0,-1.5) \boxpath node[anchor= south east ]{3};
\draw[step =0.5,shift={(0,0)}, thick](0,-2) \boxpath node[anchor= south east ]{2};
\draw[step =0.5,shift={(0,0)}, thick](0,-2.5) \boxpath node[anchor= south east ]{1};

\end{scope}
\end{tikzpicture}
\caption{\small     Both abaci represent the $4$-core $\lambda$.  }
  \label{fig;abacus}
\end{figure}

\end{example}
Given a flush abacus, that is, the abacus of an $\n$-core $\lambda$, we
can associate to it the vector whose $i^{\mathrm th}$ entry is
the largest level in runner $i-1$ which contains a bead.
Note that the sum of the entries in this vector is the balance number of the
abacus.
When the abacus is balanced,
we will call this vector $\vec n (\lambda)$, in keeping
with the notation of \cite{GKS;cranks}.  We note that $\vec n (\lambda) \in Q$.
\begin{example}
In the example above $\vec n (\lambda) =(2,0,0,-2)$,
and the vector for the unbalanced abacus is $(-1,2,0,0)$.
\end{example}
We recall the following claim, which can be found in \cite{BJV}.
\begin{claim}
The map $\lambda \mapsto \vec n (\lambda) $ is an $\affS$-equivariant bijection
$\{ \n \text{-cores} \} \to Q$.
\end{claim}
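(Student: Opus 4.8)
The plan is to treat the two assertions—bijectivity and $\affS$-equivariance—separately, deriving both from a single combinatorial dictionary relating the sliding of beads on a balanced flush abacus to the addition or removal of boxes of a fixed residue.

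For bijectivity, I would first observe that a balanced flush abacus is completely recorded by $\vec n(\lambda)$. Since $\lambda$ is an $n$-core its abacus is flush, so on runner $k$ the beads occupy exactly the levels $\le c_k$, where $c_k$ is the largest level carrying a bead; the whole abacus is therefore recovered from $(c_0,\dots,c_{n-1})$. By the stated uniqueness of the balance-$0$ abacus this vector is well defined, and its coordinate sum equals the balance number $0$, so $\vec n(\lambda)\in Q$. Conversely, any $(c_0,\dots,c_{n-1})\in Q$ determines the flush abacus with beads at exactly the levels $\le c_k$ on runner $k$; this abacus has balance number $\sum_k c_k=0$ and hence represents a unique $n$-core. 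These two constructions are visibly inverse to one another, giving the bijection; the only points needing care are that the resulting bead configuration is a genuine balanced flush abacus of a partition, which is immediate from the definitions in Section \ref{sec-abacus}.

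The heart of the proof is equivariance, $\vec n(s_i\lambda)=s_i\,\vec n(\lambda)$ for every generator $s_i$ with $0\le i<n$. First I would establish the bead--box dictionary: if position $p$ carries a bead and $p+1$ is a gap, then sliding that bead from $p$ to $p+1$ produces the abacus of $\lambda$ together with one new addable box, and that box has residue $(p+1)\bmod n$; dually, sliding a bead down from $p$ into a gap at $p-1$ removes a box of residue $p\bmod n$. This is the step connecting the one-dimensional abacus with the two-dimensional residue pattern $y-x\bmod n$, and it is where the real bookkeeping lives. Granting it, for $i\neq 0$ a residue-$i$ addable box corresponds exactly to a bead on runner $i-1$ at some level $\ell$ lying above a gap on runner $i$ at the same level, i.e. to a level $\ell$ with $c_i<\ell\le c_{i-1}$; sliding all of these simultaneously is precisely interchanging runners $i-1$ and $i$, which sends $(c_{i-1},c_i)$ to $(c_i,c_{i-1})$. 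Comparing with the coordinate action $s_i(a_1,\dots,a_n)=(\dots,a_{i+1},a_i,\dots)$, this is exactly $s_i\,\vec n(\lambda)$, and since the simultaneous slide adds all addable $i$-boxes (or, when $c_i>c_{i-1}$, removes all removable $i$-boxes) it realizes the action $s_i\lambda$ on cores.

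The remaining case $i=0$ is the same argument carried across the affine seam between runner $n-1$ and runner $0$: a residue-$0$ addable box corresponds to a bead on runner $n-1$ at level $\ell$ with a gap on runner $0$ at level $\ell+1$, so the relevant levels are $c_0\le\ell\le c_{n-1}$, and sliding them sends $(c_0,c_{n-1})$ to $(c_{n-1}+1,\,c_0-1)$. This matches the coordinate formula $s_0(a_1,\dots,a_n)=(a_n+1,a_2,\dots,a_{n-1},a_1-1)$, the $+1$ and $-1$ being exactly the change of level a bead incurs when it crosses from a position $\equiv n-1$ to a position $\equiv 0$. Thus equivariance holds on all generators, and since the $s_i$ generate $\affS$ the map is $\affS$-equivariant. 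I expect the main obstacle to be the careful verification of the bead--box dictionary, and in particular checking that interchanging two runners adds (respectively removes) \emph{all} addable (removable) $i$-boxes and no others; once residues are correctly matched to runners, the generator computations and the bijection itself are routine.
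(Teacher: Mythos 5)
The paper itself does not prove this claim: it is recalled from the reference [BJV] (with the underlying combinatorics going back to James--Kerber and Garvan--Kim--Stanton), so there is no in-paper argument to compare yours against. Your outline is the standard proof and its architecture is sound: a balanced flush abacus is equivalent to a vector in $Q$ recording the top bead level on each runner, and the generator $s_i$ acts by sliding a block of beads between two adjacent runners (cyclically adjacent, across the seam, for $i=0$), which on the level vector is exactly the coordinate action of $s_i$ given in Section 2, including the $\pm 1$ for $s_0$.

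The one place where your sketch leaves real work undone --- and where the argument could silently prove a weaker statement --- is the phrase ``once residues are correctly matched to runners.'' The dictionary ``adding a box of residue $\rho$ moves a bead into a position congruent to $\rho$ mod $n$'' is an exact statement only for the normalization in which the bead set is $\{\lambda_k - k \mid k \ge 1\}$, and it survives only shifts by multiples of $n$; for an arbitrary shift the residues get cyclically relabeled, and your computation would then show equivariance for a relabeled set of generators $s_{i+c}$ rather than for $s_i$ itself (in particular it would misplace the special affine generator $s_0$). So you must verify that the \emph{balanced} abacus is the shift of $\{\lambda_k - k\}$ by a multiple of $n$, i.e.\ that the abacus with bead set $\{\lambda_k - k \mid k\ge 1\}$ has balance number divisible by $n$. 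This is true (the balance number is exactly $-n$): for a flush abacus the balance number plus $n$ equals the number of beads at nonnegative entries minus the number of gaps at negative entries, and for the bead set $\{\lambda_k - k\}$ both of those counts equal the side of the Durfee square of $\lambda$. With that lemma in place, your identification of addable $i$-boxes with bead--gap pairs between runners $i-1$ and $i$, and the resulting generator computations, go through as you describe.
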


We recall here results of Anderson \cite{Anderson}, 
which describe the abacus of an $n$-core
that is also a $t$-core, for $t$ relatively prime to $n$.
When $t=\m\n+1$, this takes a particularly nice form.

\begin{proposition}[Anderson] \label{prop;anderson}
Let $\lambda$ be an $n$-core.
Suppose $t$ is relatively prime to $\n$. Let $M = \n t -\n -t$. 
Consider the grid of points $(x,y)
\in \Z \times \Z$
with $0 \le x \le \n-1$, $0 \le y $
labelled  by $M - x t - y \n $. 
Circle a point in this grid if and only if its label  is
obtained from the first column hooklengths 
of $\lambda$. 
Then $\lambda$ is a $t$-core if and only if 
\begin{enumerate}
\item
All beads in  the abacus of $\lambda$ are at entries $\le M$,
in other words at $(x,y)$ with $0 \le x \le \n-1$, $0 \le y $;
\item
The  circled points in the grid are upwards flush,
in other words if $(x,y)$  is circled, so is $(x, y-1)$;
\item
The  circled points in the grid are flush to the right,
in other words if $(x,y)$  is circled and $x \le \n-2$, so is $(x+1, y)$.
\end{enumerate}
\end{proposition}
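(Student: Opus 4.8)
The plan is to translate everything into the language of the bead set and then read off the three conditions as flushness statements. Write $B = \{\beta_k\} \cup \Z_{<0}$ for the bead set of the abacus of $\lambda$, where $\beta_k = h^\lambda_{(k,1)}$ are the first-column hook lengths. By the flushness criterion recalled just above, $\lambda$ is an $\n$-core exactly because $B$ is closed under $j \mapsto j - \n$, and $\lambda$ is a $t$-core if and only if $B$ is closed under $j \mapsto j - t$. So, since $\lambda$ is assumed to be an $\n$-core, the entire content of the proposition is to show that closure of $B$ under subtracting $t$ is equivalent to conditions (1)--(3).

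First I would set up the coordinate dictionary. Let $S = \langle \n, t\rangle$ be the numerical semigroup generated by $\n$ and $t$; since $\gcd(\n,t)=1$, the number $M = \n t - \n - t$ is its Frobenius number and $S$ is symmetric, i.e. for every integer $z$ exactly one of $z$ and $M-z$ lies in $S$. Using that each element of $S$ has a unique expression $xt + y\n$ with $0 \le x \le \n-1$ and $y \ge 0$, the assignment $(x,y) \mapsto M - xt - y\n$ is a bijection from the grid $\{0,\dots,\n-1\}\times \Z_{\ge 0}$ onto $\Z \setminus S$, with largest label $M$ attained at $(0,0)$. The crucial feature is the dictionary it provides: stepping $x \mapsto x+1$ lowers the label by $t$, and stepping $y \mapsto y+1$ lowers it by $\n$. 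Thus ``the circled cells are closed under passing to the neighbour whose label is $\n$ smaller'' is condition (2), and likewise for $t$ and condition (3); condition (2) is automatic here because $B$ is already closed under subtracting $\n$ (within a column the visible labels form an unbroken progression of step $\n$, with no element of $S$ intervening, so no bead can be skipped).

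With this in hand the equivalence is short. For the forward direction, condition (1) asserts that every bead occupies a grid cell, equivalently that no positive bead lies in $S$; I would prove this by taking a minimal positive bead $b \in S$ and writing $b = c\n + dt$, then using closure of $B$ under $-\n$ (if $c\ge 1$) or under $-t$ (if $c=0$, using that $\lambda$ is a $t$-core) to produce a smaller nonnegative bead in $S$, forcing $0 \in B$, a contradiction. Granting (1), a bead $b$ sits at a unique cell $(x,y)$; if $x \le \n-2$ then $b-t$ is the label of $(x+1,y)$ and $t$-closure makes that cell circled, which is exactly (3). For the reverse direction I would assume (1)--(3) and verify $t$-closure directly: a bead $b$ lies at a cell $(x,y)$ by (1); if $x \le \n-2$ then $b-t = M - (x+1)t - y\n \in B$ by (3), while if $x = \n-1$ then $b - t = M - \n t - y\n < 0$ is automatically a bead. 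This last computation is also why (3) need only be imposed for $x \le \n - 2$.

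The step I expect to be the main obstacle is condition (1) and, with it, making sure the displayed grid faithfully records the whole bead set. The danger is a ``hidden'' bead sitting at a position of $S$, which the grid does not display and whose failure of $t$-closure the local conditions cannot detect; ruling this out is precisely what the symmetry of $\langle \n, t\rangle$ (equivalently, the bound that the largest first-column hook of a genuine $(\n,t)$-core is at most $M$) buys us. The remaining care is bookkeeping at the boundary column $x=\n-1$, where subtracting $t$ leaves the grid and lands among the negative entries that are beads by fiat; handling that wrap correctly is what keeps conditions (2) and (3) exactly equivalent to flushness rather than off by a boundary term.
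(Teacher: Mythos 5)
The paper does not actually prove this proposition: it is quoted from Anderson's paper, followed only by the remark that the grid columns are the runners of the $\n$-runner abacus (reordered and shifted) while the rows are congruent $\bmod\ t$. Your argument is a correct, complete proof, and it runs along exactly the lines of that remark and of Anderson's original argument: pass to the bead set $B=\{\beta_k\}\cup\Z_{<0}$, identify the grid labels with $\Z\setminus\langle \n,t\rangle$ via the symmetry of the numerical semigroup, observe that a unit step in $x$ (resp.\ $y$) lowers the label by $t$ (resp.\ $\n$), and read off conditions (1)--(3) as flushness statements; your treatment of condition (1) (no bead may lie in the semigroup, proved by descending to $0$) is precisely the point that makes the converse work. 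One thing worth making explicit rather than silent: you have read condition (2) as closure under the step that \emph{lowers} the label by $\n$, i.e.\ ``if $(x,y)$ is circled so is $(x,y+1)$,'' whereas the proposition literally says $(x,y-1)$; your reading is the correct one (the literal version fails already for $\lambda=\emptyset$ whenever $M>0$), and likewise you correctly take the operative content of condition (1) to be ``every bead sits at a grid position'' rather than merely ``every bead is $\le M$.'' Since these are corrections of loose phrasing in the statement rather than gaps in your argument, the proof stands.
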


Note that the columns of this grid are exactly the runners of $\lambda$'s 
abacus, written out of order,
with each runner shifted up or down  relative to its new left neighbor.
This shifting is performed exactly so labels in the same row are congruent
$\mod t$.  This explains why the circles must be flush to the right
as well as upwards flush.

In the special case $t = \m\n+1$, the columns of the grid are
the runners of $\lambda$'s abacus, written  in reverse order.  
Furthermore,
 each runner has been  shifted $\m$ units down relative to its new left neighbor.
So the condition of being flush to the right on Anderson's grid is given by requiring
on the abacus
that if the largest circled entry on runner $i+1$ is at level $r$
then runner $i$ must have a circled entry at level $r-\m$.
In other words, if $(a_1, \ldots, a_n) = \vec n (\lambda)$, then we require
$a_{i} +m -  a_{i+1} \ge 0$, i.e. 
$\brac{\vec n (\lambda)}{\alpha_i} \ge -\m$ for $0 < i<\n$. 
Recall the $0^{\mathrm th}$ and  $(n-1)^{\mathrm st}$ and  runners
must also have this relationship (adding a constant to all entries in
the abacus cyclically permutes the runners).
This condition becomes $a_{n}+1 +m - a_1 \ge 0$, i.e.
$\brac{\vec n (\lambda)}{\theta} \le \m+1$.

\begin{corollary}
\label{cor;anderson}
Let $\lambda$ be an $\n$-core. 
Then $\lambda$ is an $(\m\n+1)$-core if and only if 
$\brac{\vec n (\lambda)}{\alpha_i} \ge -\m$ for $0 < i<\n$ and
$\brac{\vec n (\lambda)}{\theta} \le \m+1$.
\end{corollary}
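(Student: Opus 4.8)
The plan is to specialize Anderson's Proposition \ref{prop;anderson} to $t=\m\n+1$ and to read off its three conditions as inequalities on the vector $\vec n(\lambda)$. First I would record that $\gcd(\m\n+1,\n)=1$, since $\m\n+1\equiv 1\pmod \n$, so the proposition applies, and here $M=\n t-\n-t=\m\n(\n-1)-1$. Because $t\equiv 1\pmod \n$, the label $M-xt-y\n$ of the grid point $(x,y)$ is congruent to $-1-x\pmod \n$, so the map $x\mapsto \n-1-x$ identifies the columns of Anderson's grid with the runners of the abacus of $\lambda$ in reverse order, while successive columns are offset by $t=\m\n+1$, i.e.\ by $\m$ levels plus one residue step. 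This is exactly the geometric dictionary described in the paragraph preceding the statement, and I would use it throughout.

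Next I would dispatch the three conditions in turn. Condition (2), that the circled points are upward flush, is automatic: by the Remark characterizing $\n$-cores, the abacus of the $\n$-core $\lambda$ is flush, and this is precisely upward flushness in the grid. Condition (3), flushness to the right, is the substantive one. Writing $\vec n(\lambda)=(a_1,\dots,a_n)$, where $a_i$ is the top bead level on runner $i-1$, the rightward-flush requirement says that whenever runner $i+1$ has its top bead at level $r$, runner $i$ must carry a bead at level $r-\m$; since the abacus is flush this is the inequality $a_i+\m-a_{i+1}\ge 0$, i.e.\ $\brac{\vec n(\lambda)}{\alpha_i}\ge -\m$, for each interior pair $0<i<\n$.

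It remains to produce the affine inequality $\brac{\vec n(\lambda)}{\theta}\le \m+1$ and to absorb Anderson's condition (1). My approach is to exploit that $\vec n(\lambda)$ is read off the balanced abacus, whereas adding a constant to all entries cyclically permutes the runners and sends a bead at entry $r\n+(\n-1)$ on the last runner to entry $(r+1)\n$ on runner $0$, raising its level by one (this is the $\pm 1$ already visible in the action of $s_0$). Applying the rightward-flush analysis across this wrap therefore relates the top levels $a_1$ of runner $0$ and $a_n$ of runner $\n-1$ with an extra $+1$, giving $a_n+1+\m-a_1\ge 0$, that is $\brac{\vec n(\lambda)}{\theta}\le \m+1$. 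The same shift reconciles the two normalizations: on Anderson's (unbalanced) abacus condition (1) bounds the top beads by $M$, and once the interior inequalities of condition (3) are imposed this bound collapses to a single extremal requirement that becomes, after re-balancing, precisely the wrap inequality above. Conversely, the $\n$ inequalities $\brac{\vec n(\lambda)}{\alpha_i}\ge -\m$ and $\brac{\vec n(\lambda)}{\theta}\le \m+1$ reconstitute conditions (1) and (3), with condition (2) holding automatically, so by Anderson's proposition they characterize exactly the $\n$-cores that are also $(\m\n+1)$-cores.

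I expect the main obstacle to be this last piece of bookkeeping: matching the balanced abacus used to define $\vec n(\lambda)$ against the fixed cutoff $M$ in Anderson's statement, and in particular verifying that condition (1) contributes nothing beyond the single wrap inequality rather than a full family of constraints. The asymmetry between $\ge -\m$ and $\le \m+1$ is entirely an artifact of the one level gained when a bead crosses from the last runner to the first, so the delicate point is to track that $+1$ consistently and to confirm that it is the extremal case of condition (1) which survives.
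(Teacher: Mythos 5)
Your proposal is correct and follows essentially the same route as the paper: specialize Anderson's Proposition~\ref{prop;anderson} to $t=\m\n+1$, read the right-flushness of the reversed, $\m$-shifted runners as $\brac{\vec n (\lambda)}{\alpha_i}\ge -\m$, and obtain $\brac{\vec n (\lambda)}{\theta}\le \m+1$ from the extra level a bead gains when the last runner wraps to the first under re-normalization of the abacus. You are in fact more explicit than the paper about absorbing condition (1) (which is actually implied by condition (3) together with the fact that entry $0$ is never a bead of the hook-length abacus), so your additional bookkeeping only supplements the published argument.
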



\section{The bijection between cores and alcoves}
\label{sec-bij}

In this section, we will describe a bijection between the set of partitions
that are both $\n$-cores and $(\m\n+1)$-cores and the connected components
of the $\m$-Shi hyperplane arrangement complement that lie in the dominant chamber,
or more specifically, the dominant $\m$-minimal alcoves.
Furthermore, this bijection commutes with the action of $\affS$.
(We note the minor technicality that the action on cores is a left action, but
we take the right action on alcoves when discussing the Shi arrangement.)

In particular, this map is just the restriction of the $\affS$-equivariant map
\begin{align*}
\{ \n \text{-cores} \} & \to \{ \text{alcoves in the dominant chamber} \}
\\
 w \emptyset &\mapsto  w^{-1} \Afund.
\end{align*}

\begin{theorem}
\label{thm;main}
The map $\Phi: w  \emptyset  \mapsto w^{-1} \Afund$  for 
$w $  a minimal length left coset representative of $ \affS/\Sn$
induces a bijection between
the set of $\n$-cores that are also $(\m \n +1)$-cores and the set of
$\m$-minimal alcoves  in the dominant chamber.
\end{theorem}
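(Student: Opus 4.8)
The plan is to realize $\Phi$ as the restriction of the global $\affS$-equivariant bijection $w\emptyset \mapsto w^{-1}\Afund$ between all $\n$-cores and all dominant alcoves, and then to check that this restriction carries $(\m\n+1)$-cores exactly onto $\m$-minimal alcoves by matching two sets of linear inequalities evaluated on a single shared vector.

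First I would set up the global bijection. The stabilizer of $\emptyset$ in $\affS$ is the finite group $\Sn = \langle s_1, \dots, s_{\n-1}\rangle$: indeed $s_i \emptyset = \emptyset$ for $i \neq 0$, since $\emptyset$ has neither an addable nor a removable $i$-box, whereas $s_0 \emptyset \neq \emptyset$. Hence $w \mapsto w\emptyset$ descends to a bijection from the minimal length left coset representatives of $\affS/\Sn$ onto the $\n$-cores. On the other side, $w^{-1}\Afund$ lies in the dominant chamber precisely when $w^{-1}$ is a minimal length right coset representative, equivalently when $w$ is a minimal length left coset representative; and since $\affS$ acts freely and transitively on alcoves, $w \mapsto w^{-1}\Afund$ is a bijection from these representatives onto the dominant alcoves. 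Composing, $\Phi$ is a well-defined bijection from $\n$-cores onto dominant alcoves, so it suffices to identify the two subsets in question.

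The linchpin is a single identity. Because $\lambda \mapsto \vec n(\lambda)$ is an $\affS$-equivariant bijection and the only $\Sn$-fixed point of $Q$ is the origin, we get $\vec n(\emptyset) = 0$; hence for $\lambda = w\emptyset$ equivariance gives $\vec n(\lambda) = w\cdot\vec n(\emptyset) = w(0,\dots,0)$. On the alcove side, the vector attached to $w^{-1}\Afund$ in Corollary \ref{cor;minimal} is $\beta = (w^{-1})^{-1}(0,\dots,0) = w(0,\dots,0)$. Thus $\vec n(\lambda) = \beta$: the $\vec n$-vector of the core and the defining vector of its image alcove are literally the same point of $Q$. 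It then remains only to compare the two descriptions in terms of this vector. By Corollary \ref{cor;anderson}, $\lambda$ is an $(\m\n+1)$-core iff $\brac{\beta}{\alpha_i} \ge -\m$ for $0 < i < \n$ and $\brac{\beta}{\theta} \le \m+1$. By Corollary \ref{cor;minimal}, together with the sufficiency of the wall condition recorded after Remark \ref{remark;min}, the dominant alcove $w^{-1}\Afund$ is $\m$-minimal iff $\beta$ satisfies exactly these same inequalities. Since the vectors agree, the two subsets correspond under $\Phi$, completing the proof.

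The step I expect to be the main obstacle is the sufficiency direction for $\m$-minimality, since Corollary \ref{cor;minimal} as stated supplies only necessity. To close this gap I would argue wall-by-wall: the $\n$ walls of the dominant alcove $w^{-1}\Afund$ are flipped by the right action of $s_0, \dots, s_{\n-1}$, and Proposition \ref{prop;hyper} converts each flip into the value $k = -\brac{\beta}{\alpha_i}$ together with the sign of the accompanying finite root $\alpha$, which decides whether Remark \ref{remark;min} demands $k \le \m$ (for $\alpha > 0$) or the stronger $k \le \m-1$ (for $\alpha < 0$). The care lies in verifying that the finitely many inequalities on $\beta$ already subsume every wall condition: that walls with $k \le 0$ impose nothing, and that the walls with $\alpha < 0$, where the bound is sharper, reduce to the single constraint $\brac{\beta}{\theta} \le \m+1$ coming from the $s_0$-type wall. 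Once this bookkeeping is done, necessity and sufficiency coincide and the two inequality sets match on the nose, so $\Phi$ restricts to the asserted bijection and, being a restriction of an $\affS$-equivariant map, automatically commutes with the action of $\affS$.
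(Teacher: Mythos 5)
Your proposal is correct and takes essentially the same route as the paper: both identify $\vec n(\lambda)$ with $\beta = w(0,\ldots,0)$ and then match Anderson's inequalities (Corollary \ref{cor;anderson}) against the characterization of $\m$-minimality of the dominant alcove $w^{-1}\Afund$ (Corollary \ref{cor;minimal} together with the sufficiency of the condition in Remark \ref{remark;min}, which the paper likewise leaves to the reader). One small correction to the bookkeeping you flag as the main obstacle: for a dominant alcove a wall normalized as $w^{-1}\Afund \subseteq \Hp{\alpha}{k}$, $w^{-1}s_i\Afund \subseteq \Hn{\alpha}{k}$ with $\alpha < 0$ and $k \ge 0$ cannot occur at all (dominance gives $0 \le \brac{v}{-\alpha} \le -k \le 0$ on the open alcove, a contradiction), so the sharper bound $k \le \m-1$ never applies and does not ``reduce to'' the $\theta$-constraint; the inequality $\brac{\beta}{\theta} \le \m+1$ instead comes from the $s_0$-wall in its $\alpha > 0$ normalization, where the bound is the usual $k \le \m$.
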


\begin{proof}
Let $\lambda$ be an $\n$-core  and write $\lambda = w \emptyset$ for $w \in \affS$
a minimal length left coset representative for $\affS / \Sn$.
Recall that $\vec n (\lambda) = w  (0, 0, \ldots, 0) \in Q$. 
Note, that since $w$ is a minimal length left coset representative,
$w^{-1}$ is a minimal length right coset representative and in particular,
$w^{-1} \Afund$ is in the dominant chamber.
Recall by Corollary \ref{cor;minimal} that in this case
$w^{-1} \Afund$ is $\m$-minimal
if and only if
$\brac{\beta}{\alpha_i} \ge -\m$ for  $0<i<n$ 
and $\brac{\beta}{\theta} \le m+1$, where
$\beta =  w (0, \ldots , 0) = \vec n (\lambda)$.

In Corollary \ref{cor;anderson} above, 
we have $\lambda$ is an $(\m\n+1)$-core iff 
the conditions above hold for $\beta = \vec n (\lambda)$.
\end{proof}

The bijection is pictured below in Figure \ref{fig;core}.

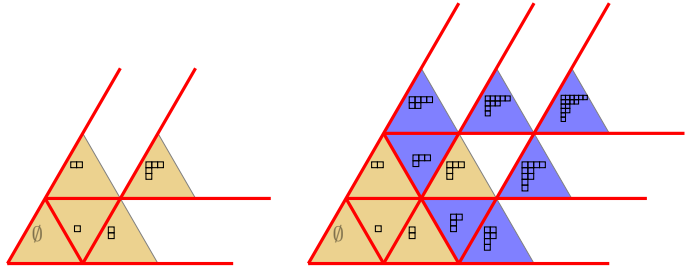
\begin{figure}[ht]

\begin{tikzpicture}[fill opacity=.5]
\tikzstyle{every node}=[font=\footnotesize]
[0,0,0]
\filldraw [draw=black,thick,fill opacity = .5,help lines, fill = Goldenrod,shift={(0.000000,0)}](0,0) -- (60:1cm) -- (0:1cm) --(0,0) node[blue] at (0.4cm,0.2cm) {} node[black] at (0.4cm,0.4cm) {\tiny{$\emptyset$}} node[blue] at (0.4cm,0.6cm) {};
\def\boxpath{-- +(-0.0773503,0) -- +(-0.0773503,-0.0773503) -- +(0,-0.0773503) -- cycle};
[1,0,0]
\filldraw [draw=black, thick,help lines,fill = Goldenrod, shift={(1,0)}](0,0) -- (60:1cm) -- (120:1cm) --(0,0) node[green,fill opacity=1] at (0cm,0.8cm) {} node[black,fill opacity=1, draw opacity = 1] at (0cm,0.6cm) {} node[green,fill opacity=1] at (0cm,0.4cm) {};
\def\boxpath{-- +(-0.0773503,0) -- +(-0.0773503,-0.0773503) -- +(0,-0.0773503) -- cycle};
\draw[step =0.0773503,shift={(0.967949,0.579274)}](0,-0.0773503) \boxpath;
[1,0,1]
\filldraw [draw=black,thick,fill opacity = .5,help lines, fill = Goldenrod,shift={(1.000000,0)}](0,0) -- (60:1cm) -- (0:1cm) --(0,0) node[blue] at (0.4cm,0.2cm) {} node[black] at (0.4cm,0.4cm) {} node[blue] at (0.4cm,0.6cm) {};
\def\boxpath{-- +(-0.0773503,0) -- +(-0.0773503,-0.0773503) -- +(0,-0.0773503) -- cycle};
\draw[step =0.0773503,shift={(1.41795,0.564102)}](0,-0.0773503) \boxpath;
\draw[step =0.0773503,shift={(1.41795,0.564102)}](0,-0.154701) \boxpath;
[1,1,0]
\filldraw [draw=black,thick,fill opacity = .5,help lines, fill = Goldenrod,shift={(0.500000,0.866025)}](0,0) -- (60:1cm) -- (0:1cm) --(0,0) node[blue] at (0.4cm,0.2cm) {} node[black] at (0.4cm,0.4cm) {} node[blue] at (0.4cm,0.6cm) {};
\def\boxpath{-- +(-0.0773503,0) -- +(-0.0773503,-0.0773503) -- +(0,-0.0773503) -- cycle};
\draw[step =0.0773503,shift={(0.917949,1.43013)}](0,-0.0773503) \boxpath;
\draw[step =0.0773503,shift={(0.917949,1.43013)}](0.0773503,-0.0773503) \boxpath;
[2,1,1]
\filldraw [draw=black,thick,fill opacity = .5,help lines, fill = Goldenrod,shift={(1.500000,0.866025)}](0,0) -- (60:1cm) -- (0:1cm) --(0,0) node[blue] at (0.4cm,0.2cm) {} node[black] at (0.4cm,0.4cm) {} node[blue] at (0.4cm,0.6cm) {};
\def\boxpath{-- +(-0.0773503,0) -- +(-0.0773503,-0.0773503) -- +(0,-0.0773503) -- cycle};
\draw[step =0.0773503,shift={(1.91795,1.43013)}](0,-0.0773503) \boxpath;
\draw[step =0.0773503,shift={(1.91795,1.43013)}](0.0773503,-0.0773503) \boxpath;
\draw[step =0.0773503,shift={(1.91795,1.43013)}](0.154701,-0.0773503) \boxpath;
\draw[step =0.0773503,shift={(1.91795,1.43013)}](0,-0.154701) \boxpath;
\draw[step =0.0773503,shift={(1.91795,1.43013)}](0,-0.232051) \boxpath;
\path [draw = red, very thick, draw opacity = 1, shift = {(0,0)}] (0,0) -- +(60:3);
\path [draw = red,very thick, draw opacity = 1] (0,0) -- +(3,0);
\path [draw = red,very thick,draw opacity = 1] (0,0) -- (0,0);
\path [draw = red, very thick, draw opacity = 1, shift = {(1,0)}] (0,0) -- +(60:3);
\path [draw = red,very thick, draw opacity = 1] (0.5,0.866025) -- +(3,0);
\path [draw = red,very thick,draw opacity = 1] (0.5,0.866025) -- (1,0);
[0,0,0]
\filldraw [draw=black,thick,fill opacity = .5,help lines, fill = Goldenrod,shift={(4.000000,0)}](0,0) -- (60:1cm) -- (0:1cm) --(0,0) node[blue] at (0.4cm,0.2cm) {} node[black] at (0.4cm,0.4cm) {\tiny{$\emptyset$}} node[blue] at (0.4cm,0.6cm) {};
\def\boxpath{-- +(-0.0773503,0) -- +(-0.0773503,-0.0773503) -- +(0,-0.0773503) -- cycle};
[1,0,0]
\filldraw [draw=black, thick,help lines,fill = Goldenrod, shift={(5,0)}](0,0) -- (60:1cm) -- (120:1cm) --(0,0) node[green,fill opacity=1] at (0cm,0.8cm) {} node[black,fill opacity=1, draw opacity = 1] at (0cm,0.6cm) {} node[green,fill opacity=1] at (0cm,0.4cm) {};
\def\boxpath{-- +(-0.0773503,0) -- +(-0.0773503,-0.0773503) -- +(0,-0.0773503) -- cycle};
\draw[step =0.0773503,shift={(4.96795,0.579274)}](0,-0.0773503) \boxpath;
[1,0,1]
\filldraw [draw=black,thick,fill opacity = .5,help lines, fill = Goldenrod,shift={(5.000000,0)}](0,0) -- (60:1cm) -- (0:1cm) --(0,0) node[blue] at (0.4cm,0.2cm) {} node[black] at (0.4cm,0.4cm) {} node[blue] at (0.4cm,0.6cm) {};
\def\boxpath{-- +(-0.0773503,0) -- +(-0.0773503,-0.0773503) -- +(0,-0.0773503) -- cycle};
\draw[step =0.0773503,shift={(5.41795,0.564102)}](0,-0.0773503) \boxpath;
\draw[step =0.0773503,shift={(5.41795,0.564102)}](0,-0.154701) \boxpath;
[2,0,1]
\filldraw [draw=black, thick,help lines,fill = blue, shift={(6,0)}](0,0) -- (60:1cm) -- (120:1cm) --(0,0) node[green,fill opacity=1] at (0cm,0.8cm) {} node[black,fill opacity=1, draw opacity = 1] at (0cm,0.6cm) {} node[green,fill opacity=1] at (0cm,0.4cm) {};
\def\boxpath{-- +(-0.0773503,0) -- +(-0.0773503,-0.0773503) -- +(0,-0.0773503) -- cycle};
\draw[step =0.0773503,shift={(5.96795,0.733975)}](0,-0.0773503) \boxpath;
\draw[step =0.0773503,shift={(5.96795,0.733975)}](0.0773503,-0.0773503) \boxpath;
\draw[step =0.0773503,shift={(5.96795,0.733975)}](0,-0.154701) \boxpath;
\draw[step =0.0773503,shift={(5.96795,0.733975)}](0,-0.232051) \boxpath;
[2,0,2]
\filldraw [draw=black,thick,fill opacity = .5,help lines, fill = blue,shift={(6.000000,0)}](0,0) -- (60:1cm) -- (0:1cm) --(0,0) node[blue] at (0.4cm,0.2cm) {} node[black] at (0.4cm,0.4cm) {} node[blue] at (0.4cm,0.6cm) {};
\def\boxpath{-- +(-0.0773503,0) -- +(-0.0773503,-0.0773503) -- +(0,-0.0773503) -- cycle};
\draw[step =0.0773503,shift={(6.41795,0.564102)}](0,-0.0773503) \boxpath;
\draw[step =0.0773503,shift={(6.41795,0.564102)}](0.0773503,-0.0773503) \boxpath;
\draw[step =0.0773503,shift={(6.41795,0.564102)}](0,-0.154701) \boxpath;
\draw[step =0.0773503,shift={(6.41795,0.564102)}](0.0773503,-0.154701) \boxpath;
\draw[step =0.0773503,shift={(6.41795,0.564102)}](0,-0.232051) \boxpath;
\draw[step =0.0773503,shift={(6.41795,0.564102)}](0,-0.309401) \boxpath;
[1,1,0]
\filldraw [draw=black,thick,fill opacity = .5,help lines, fill = Goldenrod,shift={(4.500000,0.866025)}](0,0) -- (60:1cm) -- (0:1cm) --(0,0) node[blue] at (0.4cm,0.2cm) {} node[black] at (0.4cm,0.4cm) {} node[blue] at (0.4cm,0.6cm) {};
\def\boxpath{-- +(-0.0773503,0) -- +(-0.0773503,-0.0773503) -- +(0,-0.0773503) -- cycle};
\draw[step =0.0773503,shift={(4.91795,1.43013)}](0,-0.0773503) \boxpath;
\draw[step =0.0773503,shift={(4.91795,1.43013)}](0.0773503,-0.0773503) \boxpath;
[2,1,0]
\filldraw [draw=black, thick,help lines,fill = blue, shift={(5.5,0.866025)}](0,0) -- (60:1cm) -- (120:1cm) --(0,0) node[green,fill opacity=1] at (0cm,0.8cm) {} node[black,fill opacity=1, draw opacity = 1] at (0cm,0.6cm) {} node[green,fill opacity=1] at (0cm,0.4cm) {};
\def\boxpath{-- +(-0.0773503,0) -- +(-0.0773503,-0.0773503) -- +(0,-0.0773503) -- cycle};
\draw[step =0.0773503,shift={(5.46795,1.52265)}](0,-0.0773503) \boxpath;
\draw[step =0.0773503,shift={(5.46795,1.52265)}](0.0773503,-0.0773503) \boxpath;
\draw[step =0.0773503,shift={(5.46795,1.52265)}](0.154701,-0.0773503) \boxpath;
\draw[step =0.0773503,shift={(5.46795,1.52265)}](0,-0.154701) \boxpath;
[2,1,1]
\filldraw [draw=black,thick,fill opacity = .5,help lines, fill = Goldenrod,shift={(5.500000,0.866025)}](0,0) -- (60:1cm) -- (0:1cm) --(0,0) node[blue] at (0.4cm,0.2cm) {} node[black] at (0.4cm,0.4cm) {} node[blue] at (0.4cm,0.6cm) {};
\def\boxpath{-- +(-0.0773503,0) -- +(-0.0773503,-0.0773503) -- +(0,-0.0773503) -- cycle};
\draw[step =0.0773503,shift={(5.91795,1.43013)}](0,-0.0773503) \boxpath;
\draw[step =0.0773503,shift={(5.91795,1.43013)}](0.0773503,-0.0773503) \boxpath;
\draw[step =0.0773503,shift={(5.91795,1.43013)}](0.154701,-0.0773503) \boxpath;
\draw[step =0.0773503,shift={(5.91795,1.43013)}](0,-0.154701) \boxpath;
\draw[step =0.0773503,shift={(5.91795,1.43013)}](0,-0.232051) \boxpath;
[3,1,2]
\filldraw [draw=black,thick,fill opacity = .5,help lines, fill = blue,shift={(6.500000,0.866025)}](0,0) -- (60:1cm) -- (0:1cm) --(0,0) node[blue] at (0.4cm,0.2cm) {} node[black] at (0.4cm,0.4cm) {} node[blue] at (0.4cm,0.6cm) {};
\def\boxpath{-- +(-0.0773503,0) -- +(-0.0773503,-0.0773503) -- +(0,-0.0773503) -- cycle};
\draw[step =0.0773503,shift={(6.91795,1.43013)}](0,-0.0773503) \boxpath;
\draw[step =0.0773503,shift={(6.91795,1.43013)}](0.0773503,-0.0773503) \boxpath;
\draw[step =0.0773503,shift={(6.91795,1.43013)}](0.154701,-0.0773503) \boxpath;
\draw[step =0.0773503,shift={(6.91795,1.43013)}](0.232051,-0.0773503) \boxpath;
\draw[step =0.0773503,shift={(6.91795,1.43013)}](0,-0.154701) \boxpath;
\draw[step =0.0773503,shift={(6.91795,1.43013)}](0.0773503,-0.154701) \boxpath;
\draw[step =0.0773503,shift={(6.91795,1.43013)}](0,-0.232051) \boxpath;
\draw[step =0.0773503,shift={(6.91795,1.43013)}](0.0773503,-0.232051) \boxpath;
\draw[step =0.0773503,shift={(6.91795,1.43013)}](0,-0.309401) \boxpath;
\draw[step =0.0773503,shift={(6.91795,1.43013)}](0,-0.386751) \boxpath;
[2,2,0]
\filldraw [draw=black,thick,fill opacity = .5,help lines, fill = blue,shift={(5.000000,1.73205)}](0,0) -- (60:1cm) -- (0:1cm) --(0,0) node[blue] at (0.4cm,0.2cm) {} node[black] at (0.4cm,0.4cm) {} node[blue] at (0.4cm,0.6cm) {};
\def\boxpath{-- +(-0.0773503,0) -- +(-0.0773503,-0.0773503) -- +(0,-0.0773503) -- cycle};
\draw[step =0.0773503,shift={(5.41795,2.29615)}](0,-0.0773503) \boxpath;
\draw[step =0.0773503,shift={(5.41795,2.29615)}](0.0773503,-0.0773503) \boxpath;
\draw[step =0.0773503,shift={(5.41795,2.29615)}](0.154701,-0.0773503) \boxpath;
\draw[step =0.0773503,shift={(5.41795,2.29615)}](0.232051,-0.0773503) \boxpath;
\draw[step =0.0773503,shift={(5.41795,2.29615)}](0,-0.154701) \boxpath;
\draw[step =0.0773503,shift={(5.41795,2.29615)}](0.0773503,-0.154701) \boxpath;
[3,2,1]
\filldraw [draw=black,thick,fill opacity = .5,help lines, fill = blue,shift={(6.000000,1.73205)}](0,0) -- (60:1cm) -- (0:1cm) --(0,0) node[blue] at (0.4cm,0.2cm) {} node[black] at (0.4cm,0.4cm) {} node[blue] at (0.4cm,0.6cm) {};
\def\boxpath{-- +(-0.0663002,0) -- +(-0.0663002,-0.0663002) -- +(0,-0.0663002) -- cycle};
\draw[step =0.0663002,shift={(6.41795,2.29615)}](0,-0.0663002) \boxpath;
\draw[step =0.0663002,shift={(6.41795,2.29615)}](0.0663002,-0.0663002) \boxpath;
\draw[step =0.0663002,shift={(6.41795,2.29615)}](0.1326,-0.0663002) \boxpath;
\draw[step =0.0663002,shift={(6.41795,2.29615)}](0.198901,-0.0663002) \boxpath;
\draw[step =0.0663002,shift={(6.41795,2.29615)}](0.265201,-0.0663002) \boxpath;
\draw[step =0.0663002,shift={(6.41795,2.29615)}](0,-0.1326) \boxpath;
\draw[step =0.0663002,shift={(6.41795,2.29615)}](0.0663002,-0.1326) \boxpath;
\draw[step =0.0663002,shift={(6.41795,2.29615)}](0.1326,-0.1326) \boxpath;
\draw[step =0.0663002,shift={(6.41795,2.29615)}](0,-0.198901) \boxpath;
\draw[step =0.0663002,shift={(6.41795,2.29615)}](0,-0.265201) \boxpath;
[4,2,2]
\filldraw [draw=black,thick,fill opacity = .5,help lines, fill = blue,shift={(7.000000,1.73205)}](0,0) -- (60:1cm) -- (0:1cm) --(0,0) node[blue] at (0.4cm,0.2cm) {} node[black] at (0.4cm,0.4cm) {} node[blue] at (0.4cm,0.6cm) {};
\def\boxpath{-- +(-0.0580127,0) -- +(-0.0580127,-0.0580127) -- +(0,-0.0580127) -- cycle};
\draw[step =0.0580127,shift={(7.41795,2.29615)}](0,-0.0580127) \boxpath;
\draw[step =0.0580127,shift={(7.41795,2.29615)}](0.0580127,-0.0580127) \boxpath;
\draw[step =0.0580127,shift={(7.41795,2.29615)}](0.116025,-0.0580127) \boxpath;
\draw[step =0.0580127,shift={(7.41795,2.29615)}](0.174038,-0.0580127) \boxpath;
\draw[step =0.0580127,shift={(7.41795,2.29615)}](0.232051,-0.0580127) \boxpath;
\draw[step =0.0580127,shift={(7.41795,2.29615)}](0.290064,-0.0580127) \boxpath;
\draw[step =0.0580127,shift={(7.41795,2.29615)}](0,-0.116025) \boxpath;
\draw[step =0.0580127,shift={(7.41795,2.29615)}](0.0580127,-0.116025) \boxpath;
\draw[step =0.0580127,shift={(7.41795,2.29615)}](0.116025,-0.116025) \boxpath;
\draw[step =0.0580127,shift={(7.41795,2.29615)}](0.174038,-0.116025) \boxpath;
\draw[step =0.0580127,shift={(7.41795,2.29615)}](0,-0.174038) \boxpath;
\draw[step =0.0580127,shift={(7.41795,2.29615)}](0.0580127,-0.174038) \boxpath;
\draw[step =0.0580127,shift={(7.41795,2.29615)}](0,-0.232051) \boxpath;
\draw[step =0.0580127,shift={(7.41795,2.29615)}](0.0580127,-0.232051) \boxpath;
\draw[step =0.0580127,shift={(7.41795,2.29615)}](0,-0.290064) \boxpath;
\draw[step =0.0580127,shift={(7.41795,2.29615)}](0,-0.348076) \boxpath;
\path [draw = red, very thick, draw opacity = 1, shift = {(4,0)}] (0,0) -- +(60:4);
\path [draw = red,very thick, draw opacity = 1] (4,0) -- +(4,0);
\path [draw = red,very thick,draw opacity = 1] (4,0) -- (4,0);
\path [draw = red, very thick, draw opacity = 1, shift = {(5,0)}] (0,0) -- +(60:4);
\path [draw = red,very thick, draw opacity = 1] (4.5,0.866025) -- +(4,0);
\path [draw = red,very thick,draw opacity = 1] (4.5,0.866025) -- (5,0);
\path [draw = red, very thick, draw opacity = 1, shift = {(6,0)}] (0,0) -- +(60:4);
\path [draw = red,very thick, draw opacity = 1] (5,1.73205) -- +(4,0);
\path [draw = red,very thick,draw opacity = 1] (5,1.73205) -- (6,0);
\end{tikzpicture}
  \caption{\small 
$\m$-minimal alcoves $w^{-1} \Afund$ in the dominant chamber
of the  $1$-Shi and $2$-Shi  arrangements
of type $A_2$
filled with the $3$-core partition $w \emptyset$.
In the first, they are also $4$-cores, and the second are also $7$-cores.
}
  \label{fig;core}
\end{figure}
%

\section{A bijection on alcoves}
\label{sec-inverse}

Although it is not an ingredient in the main theorem of this paper,
the following theorem builds on the work of Sections \ref{sec-minimal} and
 \ref{sec-inversion}.
We thank Mark Haiman for pointing it out to us.

Note that
the set of alcoves in $\Region$ is in bijection with $Q / (\m\n  +1)Q$.
Furthermore, it is easy to see by translating
(by $m \rho = \frac {m}{2} \sum_{\alpha \in \pos} \alpha$)
that $Q \cap \Region$ is in bijection with $Q \cap \overline{(\m\n + 1)\Afund}$.
It is the latter that is discussed in Lemma 7.4.1 of \cite{Haiman}
and  studied in \cite{A2005a} (technically for the co-root
lattice $Q^\vee$).
Taking the latter bijection into account, 
 the second statement of Theorem \ref{thm;haiman} below appears in Theorem 4.2 of
\cite{A2005a}.

\begin{theorem}
\label{thm;haiman}
\begin{enumerate}
\item
The map $w \Afund \mapsto w^{-1} \Afund$ restricts to a bijection between
alcoves in the region
$\Region=\{ v \in V \mid \brac{v}{\alpha_i} \ge -m, \brac{v}{\theta} \le m+1 \}$
and $\m$-minimal alcoves.
\item
The map $w  (0, \ldots, 0)  \mapsto w^{-1} \Afund$ restricts to a bijection between
$Q \cap \Region$ and
$\m$-minimal alcoves  in the dominant chamber.
\end{enumerate}
\end{theorem}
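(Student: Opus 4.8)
The plan is to use that inversion $w \mapsto w^{-1}$ is a bijection of $\affS$, hence induces a bijection $w\Afund \mapsto w^{-1}\Afund$ on all alcoves; part (1) then amounts to checking that this bijection carries the set of alcoves contained in $\Region$ exactly onto the set of $\m$-minimal alcoves. Concretely, I would prove the single equivalence $w\Afund \subseteq \Region \iff w^{-1}\Afund \text{ is }\m\text{-minimal}$ by showing that \emph{both} sides are equivalent to the \emph{same} condition on $w^{-1}$, namely
$$\alpha_i + m\delta \notin \Inv(w^{-1}) \quad\text{for all } i \text{ with } 0 \le i < \n.$$
I assume $m \ge 1$ throughout. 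Note each $\alpha_i + m\delta$ lies in $\affpos$, so this is a genuine statement about the inversion set.

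For the left-hand side, I would observe that the bounding hyperplanes of $\Region$ are $\Hak{\alpha_i}{-m}$ ($0<i<\n$) and $\Hak{\theta}{m+1}$, all of which belong to the full affine arrangement; hence no open alcove meets them, and $w\Afund \subseteq \Region$ is equivalent to $w\Afund$ lying on the correct side of each. Rewriting the defining half-spaces as $\Hp{\alpha_i}{-m} = \Hn{-\alpha_i}{m}$ and $\Hn{\theta}{m+1} = \Hn{-(-\theta)}{m+1}$, and applying the converse direction established in the proof of Proposition \ref{prop;inv} to $w^{-1}$, in the form $\alpha+k\delta \notin \Inv(w^{-1}) \iff w\Afund \subseteq \Hn{-\alpha}{k}$ for $\alpha+k\delta \in \affpos$, the constraints $\brac{v}{\alpha_i}\ge -m$ become $\alpha_i + m\delta \notin \Inv(w^{-1})$. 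The constraint $\brac{v}{\theta}\le m+1$ is handled the same way once one notes the key identity $-\theta + (m+1)\delta = \alpha_0 + m\delta$, coming from $\alpha_0 = \delta - \theta$; it becomes $\alpha_0 + m\delta \notin \Inv(w^{-1})$. This yields the displayed condition.

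For the right-hand side, set $x = w^{-1}$ and, for each $i$, let $\Hak{\alpha}{k}$ be the wall separating $x\Afund$ from $xs_i\Afund$, oriented so that $x\Afund \subseteq \Hp{\alpha}{k}$; Proposition \ref{prop;hyper} gives $x(\alpha_i) = \alpha - k\delta$. Since $\delta$ is $\affS$-fixed, $x(\alpha_i + m\delta) = \alpha + (m-k)\delta$, so $\alpha_i + m\delta \in \Inv(x)$ precisely when $k>m$, or when $k=m$ and $\alpha<0$. Negating, $\alpha_i + m\delta \notin \Inv(x)$ says $k \le m$ when $\alpha>0$ and $k \le m-1$ when $\alpha<0$, and, because $m \ge 1$, this is automatic when $k<0$, so it imposes a constraint only when $k \ge 0$. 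This is exactly the per-wall condition of Remark \ref{remark;min}, which (by the sufficiency noted there) characterizes $\m$-minimality. Thus $x\Afund$ is $\m$-minimal iff the displayed condition holds, matching the left-hand side and proving part (1). I expect the main obstacle to be precisely this sign bookkeeping: correctly reconciling the orientation convention of Proposition \ref{prop;hyper}, the ``$k \ge 0$'' restriction in Remark \ref{remark;min}, and the sign of the $\delta$-coefficient of $x(\alpha_i+m\delta)$ that governs membership in $\Inv(x)$.

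For part (2) I would route through the main theorem rather than repeat the geometry. Recall the $\affS$-equivariant bijection $\lambda \mapsto \vec n(\lambda)$ from $\n$-cores to $Q$, under which $\lambda = w\emptyset$ (with $w$ the minimal-length left coset representative) corresponds to $\vec n(\lambda) = w(0)$. By Corollary \ref{cor;anderson}, such a $\lambda$ is an $(\m\n+1)$-core iff $\brac{w(0)}{\alpha_i}\ge -m$ for $0<i<\n$ and $\brac{w(0)}{\theta}\le m+1$, that is, iff $w(0) \in \Region$. Hence this bijection restricts to one between the $\n$-cores that are also $(\m\n+1)$-cores and $Q \cap \Region$. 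Composing its inverse with the bijection $w\emptyset \mapsto w^{-1}\Afund$ of Theorem \ref{thm;main} onto the dominant $\m$-minimal alcoves yields exactly the map $w(0)\mapsto w^{-1}\Afund$ of part (2) and exhibits it as a bijection onto that set; equivalently, part (2) is the restriction of part (1) to those $w$ that are minimal left coset representatives, for which $w^{-1}\Afund$ is dominant.
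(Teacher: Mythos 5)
Your proposal is correct, and all its ingredients are the ones the paper itself uses --- Proposition \ref{prop;inv}, Proposition \ref{prop;hyper}, and Remark \ref{remark;min} together with the asserted (unproved) sufficiency of its condition, on which the paper's proof leans exactly as much as yours does --- but the organization is genuinely different. For part (1) the paper proves the two implications separately: the forward direction by writing $w^{-1}(\alpha_i)=\alpha-k\delta$ and splitting into the cases $k<0$; $k\ge 0,\ \alpha>0$; $k\ge 0,\ \alpha<0$, and the converse by contradiction. You instead interpose the single symmetric criterion $\alpha_i+m\delta\notin\Inv(w^{-1})$ for all $0\le i<\n$ and show each side of the desired equivalence is equivalent to it; your identity $\alpha_0+m\delta=-\theta+(m+1)\delta$ plays the same role as the paper's remark that ``$\alpha_i=\delta-\theta$'' handles $i=0$, but in your version it makes transparent \emph{why} the walls of $\Region$ are precisely $\Hak{\alpha_i}{-m}$ and $\Hak{\theta}{m+1}$: they are the hyperplanes attached to the $n$ affine roots $\alpha_i+m\delta$. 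What this buys is the elimination of the duplicated case analysis and the contradiction argument, at the cost of the careful sign bookkeeping you flagged (your computation $x(\alpha_i+m\delta)=\alpha+(m-k)\delta$, and the observation that the constraint is vacuous for $k<0$ since $m\ge 1$, is carried out correctly). For part (2) you diverge more substantially: the paper deduces it in one line from Corollary \ref{cor;minimal}, staying entirely geometric, whereas you route through Theorem \ref{thm;main} and the core bijection $\lambda\mapsto\vec n(\lambda)$ via Corollary \ref{cor;anderson}. This is non-circular (Theorem \ref{thm;main} is proved earlier and independently of Theorem \ref{thm;haiman}) and ultimately unwinds to the same application of Corollary \ref{cor;minimal}, so it is a detour through partition combinatorics rather than new content; on the other hand, it has the merit of making explicit the well-definedness of the map $w(0,\ldots,0)\mapsto w^{-1}\Afund$ (the choice of $w$ as the minimal-length left coset representative), which the paper leaves implicit.
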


\begin{proof}
Observe $\Region
= \Hn{\theta}{m+1}  \cap \bigcap_{i=1}^{n-1} \Hp{\alpha_i}{-m}$ 
 can be viewed as an $\m$-dilation of (the closure of) $\Afund \subseteq
 \Hn{\theta}{1}  \cap \bigcap_{i=1}^{n-1} \Hp{\alpha_i}{0}$.

 The second statement follows directly from  Corollary \ref{cor;minimal}.

A proof of the first  statement can be given that 
is very similar to that of Propositions
\ref{prop;hyper} and \ref{prop;inv}.
Instead we shall use those propositions to prove it.

Write 
$w^{-1}(\alpha_i) = \alpha -k \delta$ with $\alpha \in\roots$.

Suppose  $k < 0$.  Then $\alpha_i \not\in \Inv(w^{-1})$ so by Proposition \ref{prop;inv}
$w \Afund \subseteq \Hn{-\alpha_i}{0}  = \Hp{\alpha_i}{0} \subseteq \Hp{\alpha_i}{-m}$.

Suppose
$k \ge 0$. Then the $\m$-minimality of $w^{-1} \Afund$
implies $k \le m$ when $\alpha > 0$ and $k \le m-1$ in the case $\alpha < 0$ by
Propositions
 \ref{prop;inv} and  \ref{prop;hyper} and Remark \ref{remark;min}.
When $\alpha > 0$, $w^{-1}(\alpha_i + k \delta) = \alpha$ so $\alpha_i + k \delta
\not\in \Inv(w^{-1})$. By Proposition \ref{prop;inv},
$w \Afund \subseteq \Hn{-\alpha_i}{k}  = \Hp{\alpha_i}{-k} \subseteq \Hp{\alpha_i}{-m}$
since $-k \ge -m$.
When $\alpha < 0$, $w^{-1}(\alpha_i + (k+1) \delta) = \alpha + \delta > 0$
so
$w \Afund \subseteq \Hn{-\alpha_i}{k+1}  = \Hp{\alpha_i}{-k-1} \subseteq \Hp{\alpha_i}{-m}$
since $-k -1\ge -m$.
Hence $w \Afund \subseteq \bigcap_{i=1}^{n-1} \Hp{\alpha_i}{-m}$.

In the case $i=0$ since $\alpha_i = \delta - \theta$, a similar argument gives
$w \Afund \subseteq \Hn{\theta}{m+1}$.  Hence we get 
$w \Afund \subseteq \Region$.

For the converse,
suppose $w \Afund \subseteq \Region$.
Letting $\beta = w(0,0,\ldots, 0) \in \Region$ we get 
$\brac{\beta}{\theta} \le m+1, \brac{\beta}{\alpha_i} \ge -m$, $1 \le i < n$
because $\Region$ is closed. 
Assume $w^{-1} \Afund$ is not $\m$-minimal.
Then $\exists i, \alpha, k$ with $ 0 \le i < n, \alpha \in \roots, k \in\Z$ such that
$w^{-1} \Afund \subseteq \Hp{\alpha}{k}$ but 
$w^{-1} s_i \Afund \subseteq \Hn{\alpha}{k}$
with $k > m$ if $\alpha > 0$, but $k > m-1$ if $\alpha < 0$.
Note by Proposition \ref{prop;hyper} $w^{-1}(\alpha_i) = \alpha - k\delta$
where $-k=\brac{\beta}{\alpha_i}$.
First consider $\alpha >0$.  Then $\brac{\beta}{\alpha_i}  = -k < -m$ which
is a contradiction.
Next consider $\alpha < 0$.  Then $\alpha_i + k\delta \in \Inv(w^{-1})$
so by Proposition \ref{prop;inv}
$w \Afund \subseteq \Hp{-\alpha_i}{k} = \Hn{\alpha_i}{-k} \subseteq \Hn{\alpha_i}{-m} $
since $k \ge m > m-1$, contradicting $w \Afund \subseteq \Region \subseteq 
\Hp{\alpha_i}{-m} .$
\end{proof}

The bijection is illustrated below, the first part comparing
 Figure \ref{fig;onetwoshi} to Figure \ref{fig;dilate}, and the second
part in Figure \ref{fig;rightleft}.

\begin{figure}[ht]

\begin{tikzpicture}[fill opacity=.5]
\tikzstyle{every node}=[font=\footnotesize]
[-2,-1,-1]
\filldraw [draw=black,thick,fill opacity = .5, fill = Goldenrod,shift={(-1.125000,-0.649519)}](0,0) -- (60:0.75cm) -- (0:0.75cm) --(0,0) node[blue] at (0.3cm,0.15cm) {} node[black] at (0.3cm,0.3cm) {} node[blue] at (0.3cm,0.45cm) {};
\begin{pgfonlayer}{foreground layer}
\fill[fill=Goldenrod, fill opacity=1.](-1.125,-0.649519) circle (0.0675cm);
\draw[black, fill opacity=1.](-1.125,-0.649519) circle (0.0825cm);
\end{pgfonlayer}{foreground layer}
[-1,-1,-1]
\filldraw [draw=black, thick,fill = Gainsboro, shift={(-0.375,-0.649519)}](0,0) -- (60:0.75cm) -- (120:0.75cm) --(0,0) node[green,fill opacity=1] at (0cm,0.6cm) {} node[black,fill opacity=1, draw opacity = 1] at (0cm,0.45cm) {} node[green,fill opacity=1] at (0cm,0.3cm) {};
[-1,-1,0]
\filldraw [draw=black,thick,fill opacity = .5, fill = Gainsboro,shift={(-0.375000,-0.649519)}](0,0) -- (60:0.75cm) -- (0:0.75cm) --(0,0) node[blue] at (0.3cm,0.15cm) {} node[black] at (0.3cm,0.3cm) {} node[blue] at (0.3cm,0.45cm) {};
[0,-1,0]
\filldraw [draw=black, thick,fill = Gainsboro, shift={(0.375,-0.649519)}](0,0) -- (60:0.75cm) -- (120:0.75cm) --(0,0) node[green,fill opacity=1] at (0cm,0.6cm) {} node[black,fill opacity=1, draw opacity = 1] at (0cm,0.45cm) {} node[green,fill opacity=1] at (0cm,0.3cm) {};
[0,-1,1]
\filldraw [draw=black,thick,fill opacity = .5, fill = Goldenrod,shift={(0.375000,-0.649519)}](0,0) -- (60:0.75cm) -- (0:0.75cm) --(0,0) node[blue] at (0.3cm,0.15cm) {} node[black] at (0.3cm,0.3cm) {} node[blue] at (0.3cm,0.45cm) {};
\begin{pgfonlayer}{foreground layer}
\fill[fill=Goldenrod, fill opacity=1.](1.125,-0.649519) circle (0.0675cm);
\draw[black, fill opacity=1.](1.125,-0.649519) circle (0.0825cm);
\end{pgfonlayer}{foreground layer}
[1,-1,1]
\filldraw [draw=black, thick,fill = Gainsboro, shift={(1.125,-0.649519)}](0,0) -- (60:0.75cm) -- (120:0.75cm) --(0,0) node[green,fill opacity=1] at (0cm,0.6cm) {} node[black,fill opacity=1, draw opacity = 1] at (0cm,0.45cm) {} node[green,fill opacity=1] at (0cm,0.3cm) {};
[1,-1,2]
\filldraw [draw=black,thick,fill opacity = .5, fill = Gainsboro,shift={(1.125000,-0.649519)}](0,0) -- (60:0.75cm) -- (0:0.75cm) --(0,0) node[blue] at (0.3cm,0.15cm) {} node[black] at (0.3cm,0.3cm) {} node[blue] at (0.3cm,0.45cm) {};
[-1,0,-1]
\filldraw [draw=black,thick,fill opacity = .5, fill = Gainsboro,shift={(-0.750000,0)}](0,0) -- (60:0.75cm) -- (0:0.75cm) --(0,0) node[blue] at (0.3cm,0.15cm) {} node[black] at (0.3cm,0.3cm) {} node[blue] at (0.3cm,0.45cm) {};
[0,0,-1]
\filldraw [draw=black, thick,fill = Gainsboro, shift={(0,0)}](0,0) -- (60:0.75cm) -- (120:0.75cm) --(0,0) node[green,fill opacity=1] at (0cm,0.6cm) {} node[black,fill opacity=1, draw opacity = 1] at (0cm,0.45cm) {} node[green,fill opacity=1] at (0cm,0.3cm) {};
[0,0,0]
\filldraw [draw=black,thick,fill opacity = .5, fill = Goldenrod,shift={(0.000000,0)}](0,0) -- (60:0.75cm) -- (0:0.75cm) --(0,0) node[blue] at (0.3cm,0.15cm) {} node[black] at (0.3cm,0.3cm) {} node[blue] at (0.3cm,0.45cm) {};
\begin{pgfonlayer}{foreground layer}
\fill[fill=Goldenrod, fill opacity=1.](0,0) circle (0.0675cm);
\draw[black, fill opacity=1.](0,0) circle (0.0825cm);
\end{pgfonlayer}{foreground layer}
[1,0,0]
\filldraw [draw=black, thick,fill = Goldenrod, shift={(0.75,0)}](0,0) -- (60:0.75cm) -- (120:0.75cm) --(0,0) node[green,fill opacity=1] at (0cm,0.6cm) {} node[black,fill opacity=1, draw opacity = 1] at (0cm,0.45cm) {} node[green,fill opacity=1] at (0cm,0.3cm) {};
\begin{pgfonlayer}{foreground layer}
\fill[fill=Goldenrod, fill opacity=1.](1.125,0.649519) circle (0.0675cm);
\draw[black, fill opacity=1.](1.125,0.649519) circle (0.0825cm);
\end{pgfonlayer}{foreground layer}
[1,0,1]
\filldraw [draw=black,thick,fill opacity = .5, fill = Gainsboro,shift={(0.750000,0)}](0,0) -- (60:0.75cm) -- (0:0.75cm) --(0,0) node[blue] at (0.3cm,0.15cm) {} node[black] at (0.3cm,0.3cm) {} node[blue] at (0.3cm,0.45cm) {};
[0,1,-1]
\filldraw [draw=black,thick,fill opacity = .5, fill = Goldenrod,shift={(-0.375000,0.649519)}](0,0) -- (60:0.75cm) -- (0:0.75cm) --(0,0) node[blue] at (0.3cm,0.15cm) {} node[black] at (0.3cm,0.3cm) {} node[blue] at (0.3cm,0.45cm) {};
\begin{pgfonlayer}{foreground layer}
\fill[fill=Goldenrod, fill opacity=1.](0,1.29904) circle (0.0675cm);
\draw[black, fill opacity=1.](0,1.29904) circle (0.0825cm);
\end{pgfonlayer}{foreground layer}
[1,1,-1]
\filldraw [draw=black, thick,fill = Gainsboro, shift={(0.375,0.649519)}](0,0) -- (60:0.75cm) -- (120:0.75cm) --(0,0) node[green,fill opacity=1] at (0cm,0.6cm) {} node[black,fill opacity=1, draw opacity = 1] at (0cm,0.45cm) {} node[green,fill opacity=1] at (0cm,0.3cm) {};
[1,1,0]
\filldraw [draw=black,thick,fill opacity = .5, fill = Gainsboro,shift={(0.375000,0.649519)}](0,0) -- (60:0.75cm) -- (0:0.75cm) --(0,0) node[blue] at (0.3cm,0.15cm) {} node[black] at (0.3cm,0.3cm) {} node[blue] at (0.3cm,0.45cm) {};
[1,2,-1]
\filldraw [draw=black,thick,fill opacity = .5, fill = Gainsboro,shift={(0.000000,1.29904)}](0,0) -- (60:0.75cm) -- (0:0.75cm) --(0,0) node[blue] at (0.3cm,0.15cm) {} node[black] at (0.3cm,0.3cm) {} node[blue] at (0.3cm,0.45cm) {};
\path [draw = red, very thick,  shift = {(0,0)}](-1.125,-1.94856) -- (1.125,1.94856);
\path [draw = red,very thick, shift = {(0,0)}](-2.25,0) -- (2.25,0);
\path [draw = red, very thick,  shift = {(0,0)}](-1.125,1.94856) -- (1.125,-1.94856);
\path [draw = red, very thick,  shift = {(0.75,0)}](-1.125,-1.94856) -- (1.125,1.94856);
\path [draw = red,very thick, shift = {(0,0.649519)}](-2.25,0) -- (2.25,0);
\path [draw = red, very thick,  shift = {(0.75,0)}](-1.125,1.94856) -- (1.125,-1.94856);
\fill[fill=black, fill opacity=1.](0.,0.) circle (0.0675cm);[-4,-2,-2]
\filldraw [draw=black,thick,fill opacity = .5, fill = blue,shift={(3.750000,-1.29904)}](0,0) -- (60:0.75cm) -- (0:0.75cm) --(0,0) node[blue] at (0.3cm,0.15cm) {} node[black] at (0.3cm,0.3cm) {} node[blue] at (0.3cm,0.45cm) {};
\begin{pgfonlayer}{foreground layer}
\fill[fill=blue, fill opacity=1.](3.75,-1.29904) circle (0.0675cm);
\draw[black, fill opacity=1.](3.75,-1.29904) circle (0.0825cm);
\end{pgfonlayer}{foreground layer}
[-3,-2,-2]
\filldraw [draw=black, thick,fill = Gainsboro, shift={(4.5,-1.29904)}](0,0) -- (60:0.75cm) -- (120:0.75cm) --(0,0) node[green,fill opacity=1] at (0cm,0.6cm) {} node[black,fill opacity=1, draw opacity = 1] at (0cm,0.45cm) {} node[green,fill opacity=1] at (0cm,0.3cm) {};
[-3,-2,-1]
\filldraw [draw=black,thick,fill opacity = .5, fill = Gainsboro,shift={(4.500000,-1.29904)}](0,0) -- (60:0.75cm) -- (0:0.75cm) --(0,0) node[blue] at (0.3cm,0.15cm) {} node[black] at (0.3cm,0.3cm) {} node[blue] at (0.3cm,0.45cm) {};
[-2,-2,-1]
\filldraw [draw=black, thick,fill = Gainsboro, shift={(5.25,-1.29904)}](0,0) -- (60:0.75cm) -- (120:0.75cm) --(0,0) node[green,fill opacity=1] at (0cm,0.6cm) {} node[black,fill opacity=1, draw opacity = 1] at (0cm,0.45cm) {} node[green,fill opacity=1] at (0cm,0.3cm) {};
[-2,-2,0]
\filldraw [draw=black,thick,fill opacity = .5, fill = Gainsboro,shift={(5.250000,-1.29904)}](0,0) -- (60:0.75cm) -- (0:0.75cm) --(0,0) node[blue] at (0.3cm,0.15cm) {} node[black] at (0.3cm,0.3cm) {} node[blue] at (0.3cm,0.45cm) {};
[-1,-2,0]
\filldraw [draw=black, thick,fill = blue, shift={(6,-1.29904)}](0,0) -- (60:0.75cm) -- (120:0.75cm) --(0,0) node[green,fill opacity=1] at (0cm,0.6cm) {} node[black,fill opacity=1, draw opacity = 1] at (0cm,0.45cm) {} node[green,fill opacity=1] at (0cm,0.3cm) {};
\begin{pgfonlayer}{foreground layer}
\fill[fill=blue, fill opacity=1.](6,-1.29904) circle (0.0675cm);
\draw[black, fill opacity=1.](6,-1.29904) circle (0.0825cm);
\end{pgfonlayer}{foreground layer}
[-1,-2,1]
\filldraw [draw=black,thick,fill opacity = .5, fill = Gainsboro,shift={(6.000000,-1.29904)}](0,0) -- (60:0.75cm) -- (0:0.75cm) --(0,0) node[blue] at (0.3cm,0.15cm) {} node[black] at (0.3cm,0.3cm) {} node[blue] at (0.3cm,0.45cm) {};
[0,-2,1]
\filldraw [draw=black, thick,fill = Gainsboro, shift={(6.75,-1.29904)}](0,0) -- (60:0.75cm) -- (120:0.75cm) --(0,0) node[green,fill opacity=1] at (0cm,0.6cm) {} node[black,fill opacity=1, draw opacity = 1] at (0cm,0.45cm) {} node[green,fill opacity=1] at (0cm,0.3cm) {};
[0,-2,2]
\filldraw [draw=black,thick,fill opacity = .5, fill = Gainsboro,shift={(6.750000,-1.29904)}](0,0) -- (60:0.75cm) -- (0:0.75cm) --(0,0) node[blue] at (0.3cm,0.15cm) {} node[black] at (0.3cm,0.3cm) {} node[blue] at (0.3cm,0.45cm) {};
[1,-2,2]
\filldraw [draw=black, thick,fill = Gainsboro, shift={(7.5,-1.29904)}](0,0) -- (60:0.75cm) -- (120:0.75cm) --(0,0) node[green,fill opacity=1] at (0cm,0.6cm) {} node[black,fill opacity=1, draw opacity = 1] at (0cm,0.45cm) {} node[green,fill opacity=1] at (0cm,0.3cm) {};
[1,-2,3]
\filldraw [draw=black,thick,fill opacity = .5, fill = blue,shift={(7.500000,-1.29904)}](0,0) -- (60:0.75cm) -- (0:0.75cm) --(0,0) node[blue] at (0.3cm,0.15cm) {} node[black] at (0.3cm,0.3cm) {} node[blue] at (0.3cm,0.45cm) {};
\begin{pgfonlayer}{foreground layer}
\fill[fill=blue, fill opacity=1.](8.25,-1.29904) circle (0.0675cm);
\draw[black, fill opacity=1.](8.25,-1.29904) circle (0.0825cm);
\end{pgfonlayer}{foreground layer}
[2,-2,3]
\filldraw [draw=black, thick,fill = Gainsboro, shift={(8.25,-1.29904)}](0,0) -- (60:0.75cm) -- (120:0.75cm) --(0,0) node[green,fill opacity=1] at (0cm,0.6cm) {} node[black,fill opacity=1, draw opacity = 1] at (0cm,0.45cm) {} node[green,fill opacity=1] at (0cm,0.3cm) {};
[2,-2,4]
\filldraw [draw=black,thick,fill opacity = .5, fill = Gainsboro,shift={(8.250000,-1.29904)}](0,0) -- (60:0.75cm) -- (0:0.75cm) --(0,0) node[blue] at (0.3cm,0.15cm) {} node[black] at (0.3cm,0.3cm) {} node[blue] at (0.3cm,0.45cm) {};
[-3,-1,-2]
\filldraw [draw=black,thick,fill opacity = .5, fill = Gainsboro,shift={(4.125000,-0.649519)}](0,0) -- (60:0.75cm) -- (0:0.75cm) --(0,0) node[blue] at (0.3cm,0.15cm) {} node[black] at (0.3cm,0.3cm) {} node[blue] at (0.3cm,0.45cm) {};
[-2,-1,-2]
\filldraw [draw=black, thick,fill = Gainsboro, shift={(4.875,-0.649519)}](0,0) -- (60:0.75cm) -- (120:0.75cm) --(0,0) node[green,fill opacity=1] at (0cm,0.6cm) {} node[black,fill opacity=1, draw opacity = 1] at (0cm,0.45cm) {} node[green,fill opacity=1] at (0cm,0.3cm) {};
[-2,-1,-1]
\filldraw [draw=black,thick,fill opacity = .5, fill = Goldenrod,shift={(4.875000,-0.649519)}](0,0) -- (60:0.75cm) -- (0:0.75cm) --(0,0) node[blue] at (0.3cm,0.15cm) {} node[black] at (0.3cm,0.3cm) {} node[blue] at (0.3cm,0.45cm) {};
\begin{pgfonlayer}{foreground layer}
\fill[fill=Goldenrod, fill opacity=1.](4.875,-0.649519) circle (0.0675cm);
\draw[black, fill opacity=1.](4.875,-0.649519) circle (0.0825cm);
\end{pgfonlayer}{foreground layer}
[-1,-1,-1]
\filldraw [draw=black, thick,fill = Gainsboro, shift={(5.625,-0.649519)}](0,0) -- (60:0.75cm) -- (120:0.75cm) --(0,0) node[green,fill opacity=1] at (0cm,0.6cm) {} node[black,fill opacity=1, draw opacity = 1] at (0cm,0.45cm) {} node[green,fill opacity=1] at (0cm,0.3cm) {};
[-1,-1,0]
\filldraw [draw=black,thick,fill opacity = .5, fill = Gainsboro,shift={(5.625000,-0.649519)}](0,0) -- (60:0.75cm) -- (0:0.75cm) --(0,0) node[blue] at (0.3cm,0.15cm) {} node[black] at (0.3cm,0.3cm) {} node[blue] at (0.3cm,0.45cm) {};
[0,-1,0]
\filldraw [draw=black, thick,fill = Gainsboro, shift={(6.375,-0.649519)}](0,0) -- (60:0.75cm) -- (120:0.75cm) --(0,0) node[green,fill opacity=1] at (0cm,0.6cm) {} node[black,fill opacity=1, draw opacity = 1] at (0cm,0.45cm) {} node[green,fill opacity=1] at (0cm,0.3cm) {};
[0,-1,1]
\filldraw [draw=black,thick,fill opacity = .5, fill = Goldenrod,shift={(6.375000,-0.649519)}](0,0) -- (60:0.75cm) -- (0:0.75cm) --(0,0) node[blue] at (0.3cm,0.15cm) {} node[black] at (0.3cm,0.3cm) {} node[blue] at (0.3cm,0.45cm) {};
\begin{pgfonlayer}{foreground layer}
\fill[fill=Goldenrod, fill opacity=1.](7.125,-0.649519) circle (0.0675cm);
\draw[black, fill opacity=1.](7.125,-0.649519) circle (0.0825cm);
\end{pgfonlayer}{foreground layer}
[1,-1,1]
\filldraw [draw=black, thick,fill = Gainsboro, shift={(7.125,-0.649519)}](0,0) -- (60:0.75cm) -- (120:0.75cm) --(0,0) node[green,fill opacity=1] at (0cm,0.6cm) {} node[black,fill opacity=1, draw opacity = 1] at (0cm,0.45cm) {} node[green,fill opacity=1] at (0cm,0.3cm) {};
[1,-1,2]
\filldraw [draw=black,thick,fill opacity = .5, fill = Gainsboro,shift={(7.125000,-0.649519)}](0,0) -- (60:0.75cm) -- (0:0.75cm) --(0,0) node[blue] at (0.3cm,0.15cm) {} node[black] at (0.3cm,0.3cm) {} node[blue] at (0.3cm,0.45cm) {};
[2,-1,2]
\filldraw [draw=black, thick,fill = Gainsboro, shift={(7.875,-0.649519)}](0,0) -- (60:0.75cm) -- (120:0.75cm) --(0,0) node[green,fill opacity=1] at (0cm,0.6cm) {} node[black,fill opacity=1, draw opacity = 1] at (0cm,0.45cm) {} node[green,fill opacity=1] at (0cm,0.3cm) {};
[2,-1,3]
\filldraw [draw=black,thick,fill opacity = .5, fill = Gainsboro,shift={(7.875000,-0.649519)}](0,0) -- (60:0.75cm) -- (0:0.75cm) --(0,0) node[blue] at (0.3cm,0.15cm) {} node[black] at (0.3cm,0.3cm) {} node[blue] at (0.3cm,0.45cm) {};
[-2,0,-2]
\filldraw [draw=black,thick,fill opacity = .5, fill = Gainsboro,shift={(4.500000,0)}](0,0) -- (60:0.75cm) -- (0:0.75cm) --(0,0) node[blue] at (0.3cm,0.15cm) {} node[black] at (0.3cm,0.3cm) {} node[blue] at (0.3cm,0.45cm) {};
[-1,0,-2]
\filldraw [draw=black, thick,fill = blue, shift={(5.25,0)}](0,0) -- (60:0.75cm) -- (120:0.75cm) --(0,0) node[green,fill opacity=1] at (0cm,0.6cm) {} node[black,fill opacity=1, draw opacity = 1] at (0cm,0.45cm) {} node[green,fill opacity=1] at (0cm,0.3cm) {};
\begin{pgfonlayer}{foreground layer}
\fill[fill=blue, fill opacity=1.](4.875,0.649519) circle (0.0675cm);
\draw[black, fill opacity=1.](4.875,0.649519) circle (0.0825cm);
\end{pgfonlayer}{foreground layer}
[-1,0,-1]
\filldraw [draw=black,thick,fill opacity = .5, fill = Gainsboro,shift={(5.250000,0)}](0,0) -- (60:0.75cm) -- (0:0.75cm) --(0,0) node[blue] at (0.3cm,0.15cm) {} node[black] at (0.3cm,0.3cm) {} node[blue] at (0.3cm,0.45cm) {};
[0,0,-1]
\filldraw [draw=black, thick,fill = Gainsboro, shift={(6,0)}](0,0) -- (60:0.75cm) -- (120:0.75cm) --(0,0) node[green,fill opacity=1] at (0cm,0.6cm) {} node[black,fill opacity=1, draw opacity = 1] at (0cm,0.45cm) {} node[green,fill opacity=1] at (0cm,0.3cm) {};
[0,0,0]
\filldraw [draw=black,thick,fill opacity = .5, fill = Goldenrod,shift={(6.000000,0)}](0,0) -- (60:0.75cm) -- (0:0.75cm) --(0,0) node[blue] at (0.3cm,0.15cm) {} node[black] at (0.3cm,0.3cm) {} node[blue] at (0.3cm,0.45cm) {};
\begin{pgfonlayer}{foreground layer}
\fill[fill=Goldenrod, fill opacity=1.](6,0) circle (0.0675cm);
\draw[black, fill opacity=1.](6,0) circle (0.0825cm);
\end{pgfonlayer}{foreground layer}
[1,0,0]
\filldraw [draw=black, thick,fill = Goldenrod, shift={(6.75,0)}](0,0) -- (60:0.75cm) -- (120:0.75cm) --(0,0) node[green,fill opacity=1] at (0cm,0.6cm) {} node[black,fill opacity=1, draw opacity = 1] at (0cm,0.45cm) {} node[green,fill opacity=1] at (0cm,0.3cm) {};
\begin{pgfonlayer}{foreground layer}
\fill[fill=Goldenrod, fill opacity=1.](7.125,0.649519) circle (0.0675cm);
\draw[black, fill opacity=1.](7.125,0.649519) circle (0.0825cm);
\end{pgfonlayer}{foreground layer}
[1,0,1]
\filldraw [draw=black,thick,fill opacity = .5, fill = Gainsboro,shift={(6.750000,0)}](0,0) -- (60:0.75cm) -- (0:0.75cm) --(0,0) node[blue] at (0.3cm,0.15cm) {} node[black] at (0.3cm,0.3cm) {} node[blue] at (0.3cm,0.45cm) {};
[2,0,1]
\filldraw [draw=black, thick,fill = Gainsboro, shift={(7.5,0)}](0,0) -- (60:0.75cm) -- (120:0.75cm) --(0,0) node[green,fill opacity=1] at (0cm,0.6cm) {} node[black,fill opacity=1, draw opacity = 1] at (0cm,0.45cm) {} node[green,fill opacity=1] at (0cm,0.3cm) {};
[2,0,2]
\filldraw [draw=black,thick,fill opacity = .5, fill = blue,shift={(7.500000,0)}](0,0) -- (60:0.75cm) -- (0:0.75cm) --(0,0) node[blue] at (0.3cm,0.15cm) {} node[black] at (0.3cm,0.3cm) {} node[blue] at (0.3cm,0.45cm) {};
\begin{pgfonlayer}{foreground layer}
\fill[fill=blue, fill opacity=1.](8.25,0) circle (0.0675cm);
\draw[black, fill opacity=1.](8.25,0) circle (0.0825cm);
\end{pgfonlayer}{foreground layer}
[-1,1,-2]
\filldraw [draw=black,thick,fill opacity = .5, fill = Gainsboro,shift={(4.875000,0.649519)}](0,0) -- (60:0.75cm) -- (0:0.75cm) --(0,0) node[blue] at (0.3cm,0.15cm) {} node[black] at (0.3cm,0.3cm) {} node[blue] at (0.3cm,0.45cm) {};
[0,1,-2]
\filldraw [draw=black, thick,fill = Gainsboro, shift={(5.625,0.649519)}](0,0) -- (60:0.75cm) -- (120:0.75cm) --(0,0) node[green,fill opacity=1] at (0cm,0.6cm) {} node[black,fill opacity=1, draw opacity = 1] at (0cm,0.45cm) {} node[green,fill opacity=1] at (0cm,0.3cm) {};
[0,1,-1]
\filldraw [draw=black,thick,fill opacity = .5, fill = Goldenrod,shift={(5.625000,0.649519)}](0,0) -- (60:0.75cm) -- (0:0.75cm) --(0,0) node[blue] at (0.3cm,0.15cm) {} node[black] at (0.3cm,0.3cm) {} node[blue] at (0.3cm,0.45cm) {};
\begin{pgfonlayer}{foreground layer}
\fill[fill=Goldenrod, fill opacity=1.](6,1.29904) circle (0.0675cm);
\draw[black, fill opacity=1.](6,1.29904) circle (0.0825cm);
\end{pgfonlayer}{foreground layer}
[1,1,-1]
\filldraw [draw=black, thick,fill = Gainsboro, shift={(6.375,0.649519)}](0,0) -- (60:0.75cm) -- (120:0.75cm) --(0,0) node[green,fill opacity=1] at (0cm,0.6cm) {} node[black,fill opacity=1, draw opacity = 1] at (0cm,0.45cm) {} node[green,fill opacity=1] at (0cm,0.3cm) {};
[1,1,0]
\filldraw [draw=black,thick,fill opacity = .5, fill = Gainsboro,shift={(6.375000,0.649519)}](0,0) -- (60:0.75cm) -- (0:0.75cm) --(0,0) node[blue] at (0.3cm,0.15cm) {} node[black] at (0.3cm,0.3cm) {} node[blue] at (0.3cm,0.45cm) {};
[2,1,0]
\filldraw [draw=black, thick,fill = Gainsboro, shift={(7.125,0.649519)}](0,0) -- (60:0.75cm) -- (120:0.75cm) --(0,0) node[green,fill opacity=1] at (0cm,0.6cm) {} node[black,fill opacity=1, draw opacity = 1] at (0cm,0.45cm) {} node[green,fill opacity=1] at (0cm,0.3cm) {};
[2,1,1]
\filldraw [draw=black,thick,fill opacity = .5, fill = Gainsboro,shift={(7.125000,0.649519)}](0,0) -- (60:0.75cm) -- (0:0.75cm) --(0,0) node[blue] at (0.3cm,0.15cm) {} node[black] at (0.3cm,0.3cm) {} node[blue] at (0.3cm,0.45cm) {};
[0,2,-2]
\filldraw [draw=black,thick,fill opacity = .5, fill = Gainsboro,shift={(5.250000,1.29904)}](0,0) -- (60:0.75cm) -- (0:0.75cm) --(0,0) node[blue] at (0.3cm,0.15cm) {} node[black] at (0.3cm,0.3cm) {} node[blue] at (0.3cm,0.45cm) {};
[1,2,-2]
\filldraw [draw=black, thick,fill = Gainsboro, shift={(6,1.29904)}](0,0) -- (60:0.75cm) -- (120:0.75cm) --(0,0) node[green,fill opacity=1] at (0cm,0.6cm) {} node[black,fill opacity=1, draw opacity = 1] at (0cm,0.45cm) {} node[green,fill opacity=1] at (0cm,0.3cm) {};
[1,2,-1]
\filldraw [draw=black,thick,fill opacity = .5, fill = Gainsboro,shift={(6.000000,1.29904)}](0,0) -- (60:0.75cm) -- (0:0.75cm) --(0,0) node[blue] at (0.3cm,0.15cm) {} node[black] at (0.3cm,0.3cm) {} node[blue] at (0.3cm,0.45cm) {};
[2,2,-1]
\filldraw [draw=black, thick,fill = Gainsboro, shift={(6.75,1.29904)}](0,0) -- (60:0.75cm) -- (120:0.75cm) --(0,0) node[green,fill opacity=1] at (0cm,0.6cm) {} node[black,fill opacity=1, draw opacity = 1] at (0cm,0.45cm) {} node[green,fill opacity=1] at (0cm,0.3cm) {};
[2,2,0]
\filldraw [draw=black,thick,fill opacity = .5, fill = blue,shift={(6.750000,1.29904)}](0,0) -- (60:0.75cm) -- (0:0.75cm) --(0,0) node[blue] at (0.3cm,0.15cm) {} node[black] at (0.3cm,0.3cm) {} node[blue] at (0.3cm,0.45cm) {};
\begin{pgfonlayer}{foreground layer}
\fill[fill=blue, fill opacity=1.](7.125,1.94856) circle (0.0675cm);
\draw[black, fill opacity=1.](7.125,1.94856) circle (0.0825cm);
\end{pgfonlayer}{foreground layer}
[1,3,-2]
\filldraw [draw=black,thick,fill opacity = .5, fill = blue,shift={(5.625000,1.94856)}](0,0) -- (60:0.75cm) -- (0:0.75cm) --(0,0) node[blue] at (0.3cm,0.15cm) {} node[black] at (0.3cm,0.3cm) {} node[blue] at (0.3cm,0.45cm) {};
\begin{pgfonlayer}{foreground layer}
\fill[fill=blue, fill opacity=1.](6,2.59808) circle (0.0675cm);
\draw[black, fill opacity=1.](6,2.59808) circle (0.0825cm);
\end{pgfonlayer}{foreground layer}
[2,3,-2]
\filldraw [draw=black, thick,fill = Gainsboro, shift={(6.375,1.94856)}](0,0) -- (60:0.75cm) -- (120:0.75cm) --(0,0) node[green,fill opacity=1] at (0cm,0.6cm) {} node[black,fill opacity=1, draw opacity = 1] at (0cm,0.45cm) {} node[green,fill opacity=1] at (0cm,0.3cm) {};
[2,3,-1]
\filldraw [draw=black,thick,fill opacity = .5, fill = Gainsboro,shift={(6.375000,1.94856)}](0,0) -- (60:0.75cm) -- (0:0.75cm) --(0,0) node[blue] at (0.3cm,0.15cm) {} node[black] at (0.3cm,0.3cm) {} node[blue] at (0.3cm,0.45cm) {};
[2,4,-2]
\filldraw [draw=black,thick,fill opacity = .5, fill = Gainsboro,shift={(6.000000,2.59808)}](0,0) -- (60:0.75cm) -- (0:0.75cm) --(0,0) node[blue] at (0.3cm,0.15cm) {} node[black] at (0.3cm,0.3cm) {} node[blue] at (0.3cm,0.45cm) {};
\path [draw = red, very thick,  shift = {(5.25,0)}](-1.5,-2.59808) -- (1.5,2.59808);
\path [draw = red,very thick, shift = {(6,-0.649519)}](-3,0) -- (3,0);
\path [draw = red, very thick,  shift = {(5.25,0)}](-1.5,2.59808) -- (1.5,-2.59808);
\path [draw = red, very thick,  shift = {(6,0)}](-1.5,-2.59808) -- (1.5,2.59808);
\path [draw = red,very thick, shift = {(6,0)}](-3,0) -- (3,0);
\path [draw = red, very thick,  shift = {(6,0)}](-1.5,2.59808) -- (1.5,-2.59808);
\path [draw = red, very thick,  shift = {(6.75,0)}](-1.5,-2.59808) -- (1.5,2.59808);
\path [draw = red,very thick, shift = {(6,0.649519)}](-3,0) -- (3,0);
\path [draw = red, very thick,  shift = {(6.75,0)}](-1.5,2.59808) -- (1.5,-2.59808);
\path [draw = red, very thick,  shift = {(7.5,0)}](-1.5,-2.59808) -- (1.5,2.59808);
\path [draw = red,very thick, shift = {(6,1.29904)}](-3,0) -- (3,0);
\path [draw = red, very thick,  shift = {(7.5,0)}](-1.5,2.59808) -- (1.5,-2.59808);
\fill[fill=black, fill opacity=1.](6,0.) circle (0.0675cm);\end{tikzpicture}
  \caption{\small 
$w \Afund$ for the $\m$-minimal alcoves $w^{-1} \Afund$ in Figure
\ref{fig;onetwoshi}
below, $m=1,2$.  Note $w \Afund \subseteq \Region$. 
Each $\gamma \in Q$ is in precisely one yellow/blue alcove,
so this  illustrates the second statement of Theorem \ref{thm;haiman}. }
  \label{fig;dilate}
\end{figure}
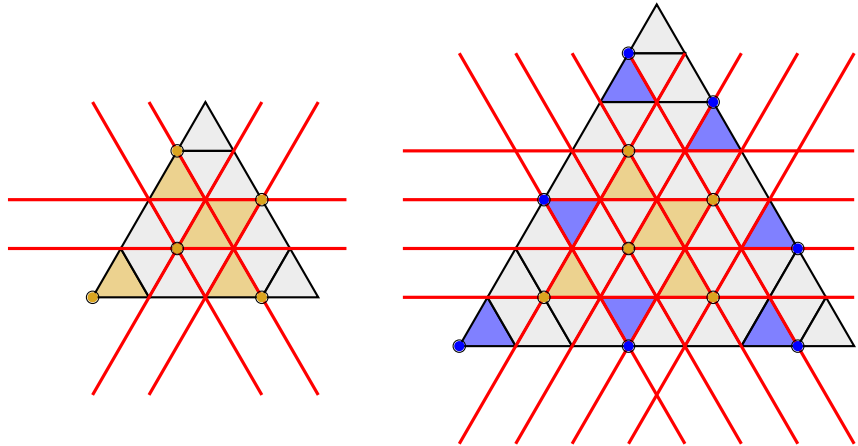

\begin{figure}[ht]

\begin{tikzpicture}[fill opacity=.5]
\tikzstyle{every node}=[font=\footnotesize]
[-1,-2,1]
\filldraw [draw=black,thick,fill opacity = .5, fill = Gainsboro,shift={(0.000000,-1.29904)}](0,0) -- (60:0.75cm) -- (0:0.75cm) --(0,0) node[blue] at (0.3cm,0.15cm) {} node[black] at (0.3cm,0.3cm) {} node[blue] at (0.3cm,0.45cm) {};
[-1,-1,-1]
\filldraw [draw=black, thick,fill = Gainsboro, shift={(-0.375,-0.649519)}](0,0) -- (60:0.75cm) -- (120:0.75cm) --(0,0) node[green,fill opacity=1] at (0cm,0.6cm) {} node[black,fill opacity=1, draw opacity = 1] at (0cm,0.45cm) {} node[green,fill opacity=1] at (0cm,0.3cm) {};
[-1,-1,0]
\filldraw [draw=black,thick,fill opacity = .5, fill = Gainsboro,shift={(-0.375000,-0.649519)}](0,0) -- (60:0.75cm) -- (0:0.75cm) --(0,0) node[blue] at (0.3cm,0.15cm) {} node[black] at (0.3cm,0.3cm) {} node[blue] at (0.3cm,0.45cm) {};
[0,-1,0]
\filldraw [draw=black, thick,fill = Gainsboro, shift={(0.375,-0.649519)}](0,0) -- (60:0.75cm) -- (120:0.75cm) --(0,0) node[green,fill opacity=1] at (0cm,0.6cm) {} node[black,fill opacity=1, draw opacity = 1] at (0cm,0.45cm) {} node[green,fill opacity=1] at (0cm,0.3cm) {};
[0,-1,1]
\filldraw [draw=black,thick,fill opacity = .5, fill = Gainsboro,shift={(0.375000,-0.649519)}](0,0) -- (60:0.75cm) -- (0:0.75cm) --(0,0) node[blue] at (0.3cm,0.15cm) {} node[black] at (0.3cm,0.3cm) {} node[blue] at (0.3cm,0.45cm) {};
[1,-1,1]
\filldraw [draw=black, thick,fill = Gainsboro, shift={(1.125,-0.649519)}](0,0) -- (60:0.75cm) -- (120:0.75cm) --(0,0) node[green,fill opacity=1] at (0cm,0.6cm) {} node[black,fill opacity=1, draw opacity = 1] at (0cm,0.45cm) {} node[green,fill opacity=1] at (0cm,0.3cm) {};
[-1,0,-1]
\filldraw [draw=black,thick,fill opacity = .5, fill = Gainsboro,shift={(-0.750000,0)}](0,0) -- (60:0.75cm) -- (0:0.75cm) --(0,0) node[blue] at (0.3cm,0.15cm) {} node[black] at (0.3cm,0.3cm) {} node[blue] at (0.3cm,0.45cm) {};
[0,0,-1]
\filldraw [draw=black, thick,fill = Gainsboro, shift={(0,0)}](0,0) -- (60:0.75cm) -- (120:0.75cm) --(0,0) node[green,fill opacity=1] at (0cm,0.6cm) {} node[black,fill opacity=1, draw opacity = 1] at (0cm,0.45cm) {} node[green,fill opacity=1] at (0cm,0.3cm) {};
[0,0,0]
\filldraw [draw=black,thick,fill opacity = .5, fill = Goldenrod,shift={(0.000000,0)}](0,0) -- (60:0.75cm) -- (0:0.75cm) --(0,0) node[blue] at (0.3cm,0.15cm) {} node[black] at (0.3cm,0.3cm) {} node[blue] at (0.3cm,0.45cm) {};
[1,0,0]
\filldraw [draw=black, thick,fill = Goldenrod, shift={(0.75,0)}](0,0) -- (60:0.75cm) -- (120:0.75cm) --(0,0) node[green,fill opacity=1] at (0cm,0.6cm) {} node[black,fill opacity=1, draw opacity = 1] at (0cm,0.45cm) {} node[green,fill opacity=1] at (0cm,0.3cm) {};
[1,0,1]
\filldraw [draw=black,thick,fill opacity = .5, fill = Goldenrod,shift={(0.750000,0)}](0,0) -- (60:0.75cm) -- (0:0.75cm) --(0,0) node[blue] at (0.3cm,0.15cm) {} node[black] at (0.3cm,0.3cm) {} node[blue] at (0.3cm,0.45cm) {};
[-1,1,-2]
\filldraw [draw=black,thick,fill opacity = .5, fill = Gainsboro,shift={(-1.125000,0.649519)}](0,0) -- (60:0.75cm) -- (0:0.75cm) --(0,0) node[blue] at (0.3cm,0.15cm) {} node[black] at (0.3cm,0.3cm) {} node[blue] at (0.3cm,0.45cm) {};
[0,1,-1]
\filldraw [draw=black,thick,fill opacity = .5, fill = Gainsboro,shift={(-0.375000,0.649519)}](0,0) -- (60:0.75cm) -- (0:0.75cm) --(0,0) node[blue] at (0.3cm,0.15cm) {} node[black] at (0.3cm,0.3cm) {} node[blue] at (0.3cm,0.45cm) {};
[1,1,-1]
\filldraw [draw=black, thick,fill = Gainsboro, shift={(0.375,0.649519)}](0,0) -- (60:0.75cm) -- (120:0.75cm) --(0,0) node[green,fill opacity=1] at (0cm,0.6cm) {} node[black,fill opacity=1, draw opacity = 1] at (0cm,0.45cm) {} node[green,fill opacity=1] at (0cm,0.3cm) {};
[1,1,0]
\filldraw [draw=black,thick,fill opacity = .5, fill = Goldenrod,shift={(0.375000,0.649519)}](0,0) -- (60:0.75cm) -- (0:0.75cm) --(0,0) node[blue] at (0.3cm,0.15cm) {} node[black] at (0.3cm,0.3cm) {} node[blue] at (0.3cm,0.45cm) {};
[2,1,1]
\filldraw [draw=black,thick,fill opacity = .5, fill = Goldenrod,shift={(1.125000,0.649519)}](0,0) -- (60:0.75cm) -- (0:0.75cm) --(0,0) node[blue] at (0.3cm,0.15cm) {} node[black] at (0.3cm,0.3cm) {} node[blue] at (0.3cm,0.45cm) {};
\path [draw = red, very thick,  shift = {(0,0)}](-1.125,-1.94856) -- (1.125,1.94856);
\path [draw = red,very thick, shift = {(0,0)}](-2.25,0) -- (2.25,0);
\path [draw = red, very thick,  shift = {(0,0)}](-1.125,1.94856) -- (1.125,-1.94856);
\path [draw = red, very thick,  shift = {(0.75,0)}](-1.125,-1.94856) -- (1.125,1.94856);
\path [draw = red,very thick, shift = {(0,0.649519)}](-2.25,0) -- (2.25,0);
\path [draw = red, very thick,  shift = {(0.75,0)}](-1.125,1.94856) -- (1.125,-1.94856);
\fill[fill=black, fill opacity=1.](0.,0.) circle (0.0675cm);[-2,-4,2]
\filldraw [draw=black,thick,fill opacity = .5, fill = Gainsboro,shift={(6.000000,-2.59808)}](0,0) -- (60:0.75cm) -- (0:0.75cm) --(0,0) node[blue] at (0.3cm,0.15cm) {} node[black] at (0.3cm,0.3cm) {} node[blue] at (0.3cm,0.45cm) {};
[-2,-3,1]
\filldraw [draw=black,thick,fill opacity = .5, fill = Gainsboro,shift={(5.625000,-1.94856)}](0,0) -- (60:0.75cm) -- (0:0.75cm) --(0,0) node[blue] at (0.3cm,0.15cm) {} node[black] at (0.3cm,0.3cm) {} node[blue] at (0.3cm,0.45cm) {};
[-1,-3,2]
\filldraw [draw=black,thick,fill opacity = .5, fill = Gainsboro,shift={(6.375000,-1.94856)}](0,0) -- (60:0.75cm) -- (0:0.75cm) --(0,0) node[blue] at (0.3cm,0.15cm) {} node[black] at (0.3cm,0.3cm) {} node[blue] at (0.3cm,0.45cm) {};
[-3,-2,-2]
\filldraw [draw=black, thick,fill = Gainsboro, shift={(4.5,-1.29904)}](0,0) -- (60:0.75cm) -- (120:0.75cm) --(0,0) node[green,fill opacity=1] at (0cm,0.6cm) {} node[black,fill opacity=1, draw opacity = 1] at (0cm,0.45cm) {} node[green,fill opacity=1] at (0cm,0.3cm) {};
[-2,-2,-1]
\filldraw [draw=black, thick,fill = Gainsboro, shift={(5.25,-1.29904)}](0,0) -- (60:0.75cm) -- (120:0.75cm) --(0,0) node[green,fill opacity=1] at (0cm,0.6cm) {} node[black,fill opacity=1, draw opacity = 1] at (0cm,0.45cm) {} node[green,fill opacity=1] at (0cm,0.3cm) {};
[-2,-2,0]
\filldraw [draw=black,thick,fill opacity = .5, fill = Gainsboro,shift={(5.250000,-1.29904)}](0,0) -- (60:0.75cm) -- (0:0.75cm) --(0,0) node[blue] at (0.3cm,0.15cm) {} node[black] at (0.3cm,0.3cm) {} node[blue] at (0.3cm,0.45cm) {};
[-1,-2,0]
\filldraw [draw=black, thick,fill = Gainsboro, shift={(6,-1.29904)}](0,0) -- (60:0.75cm) -- (120:0.75cm) --(0,0) node[green,fill opacity=1] at (0cm,0.6cm) {} node[black,fill opacity=1, draw opacity = 1] at (0cm,0.45cm) {} node[green,fill opacity=1] at (0cm,0.3cm) {};
[-1,-2,1]
\filldraw [draw=black,thick,fill opacity = .5, fill = Gainsboro,shift={(6.000000,-1.29904)}](0,0) -- (60:0.75cm) -- (0:0.75cm) --(0,0) node[blue] at (0.3cm,0.15cm) {} node[black] at (0.3cm,0.3cm) {} node[blue] at (0.3cm,0.45cm) {};
[0,-2,1]
\filldraw [draw=black, thick,fill = Gainsboro, shift={(6.75,-1.29904)}](0,0) -- (60:0.75cm) -- (120:0.75cm) --(0,0) node[green,fill opacity=1] at (0cm,0.6cm) {} node[black,fill opacity=1, draw opacity = 1] at (0cm,0.45cm) {} node[green,fill opacity=1] at (0cm,0.3cm) {};
[0,-2,2]
\filldraw [draw=black,thick,fill opacity = .5, fill = Gainsboro,shift={(6.750000,-1.29904)}](0,0) -- (60:0.75cm) -- (0:0.75cm) --(0,0) node[blue] at (0.3cm,0.15cm) {} node[black] at (0.3cm,0.3cm) {} node[blue] at (0.3cm,0.45cm) {};
[1,-2,2]
\filldraw [draw=black, thick,fill = Gainsboro, shift={(7.5,-1.29904)}](0,0) -- (60:0.75cm) -- (120:0.75cm) --(0,0) node[green,fill opacity=1] at (0cm,0.6cm) {} node[black,fill opacity=1, draw opacity = 1] at (0cm,0.45cm) {} node[green,fill opacity=1] at (0cm,0.3cm) {};
[2,-2,3]
\filldraw [draw=black, thick,fill = Gainsboro, shift={(8.25,-1.29904)}](0,0) -- (60:0.75cm) -- (120:0.75cm) --(0,0) node[green,fill opacity=1] at (0cm,0.6cm) {} node[black,fill opacity=1, draw opacity = 1] at (0cm,0.45cm) {} node[green,fill opacity=1] at (0cm,0.3cm) {};
[-2,-1,-2]
\filldraw [draw=black, thick,fill = Gainsboro, shift={(4.875,-0.649519)}](0,0) -- (60:0.75cm) -- (120:0.75cm) --(0,0) node[green,fill opacity=1] at (0cm,0.6cm) {} node[black,fill opacity=1, draw opacity = 1] at (0cm,0.45cm) {} node[green,fill opacity=1] at (0cm,0.3cm) {};
[-2,-1,-1]
\filldraw [draw=black,thick,fill opacity = .5, fill = Gainsboro,shift={(4.875000,-0.649519)}](0,0) -- (60:0.75cm) -- (0:0.75cm) --(0,0) node[blue] at (0.3cm,0.15cm) {} node[black] at (0.3cm,0.3cm) {} node[blue] at (0.3cm,0.45cm) {};
[-1,-1,-1]
\filldraw [draw=black, thick,fill = Gainsboro, shift={(5.625,-0.649519)}](0,0) -- (60:0.75cm) -- (120:0.75cm) --(0,0) node[green,fill opacity=1] at (0cm,0.6cm) {} node[black,fill opacity=1, draw opacity = 1] at (0cm,0.45cm) {} node[green,fill opacity=1] at (0cm,0.3cm) {};
[-1,-1,0]
\filldraw [draw=black,thick,fill opacity = .5, fill = Gainsboro,shift={(5.625000,-0.649519)}](0,0) -- (60:0.75cm) -- (0:0.75cm) --(0,0) node[blue] at (0.3cm,0.15cm) {} node[black] at (0.3cm,0.3cm) {} node[blue] at (0.3cm,0.45cm) {};
[0,-1,0]
\filldraw [draw=black, thick,fill = Gainsboro, shift={(6.375,-0.649519)}](0,0) -- (60:0.75cm) -- (120:0.75cm) --(0,0) node[green,fill opacity=1] at (0cm,0.6cm) {} node[black,fill opacity=1, draw opacity = 1] at (0cm,0.45cm) {} node[green,fill opacity=1] at (0cm,0.3cm) {};
[0,-1,1]
\filldraw [draw=black,thick,fill opacity = .5, fill = Gainsboro,shift={(6.375000,-0.649519)}](0,0) -- (60:0.75cm) -- (0:0.75cm) --(0,0) node[blue] at (0.3cm,0.15cm) {} node[black] at (0.3cm,0.3cm) {} node[blue] at (0.3cm,0.45cm) {};
[1,-1,1]
\filldraw [draw=black, thick,fill = Gainsboro, shift={(7.125,-0.649519)}](0,0) -- (60:0.75cm) -- (120:0.75cm) --(0,0) node[green,fill opacity=1] at (0cm,0.6cm) {} node[black,fill opacity=1, draw opacity = 1] at (0cm,0.45cm) {} node[green,fill opacity=1] at (0cm,0.3cm) {};
[1,-1,2]
\filldraw [draw=black,thick,fill opacity = .5, fill = Gainsboro,shift={(7.125000,-0.649519)}](0,0) -- (60:0.75cm) -- (0:0.75cm) --(0,0) node[blue] at (0.3cm,0.15cm) {} node[black] at (0.3cm,0.3cm) {} node[blue] at (0.3cm,0.45cm) {};
[2,-1,2]
\filldraw [draw=black, thick,fill = Gainsboro, shift={(7.875,-0.649519)}](0,0) -- (60:0.75cm) -- (120:0.75cm) --(0,0) node[green,fill opacity=1] at (0cm,0.6cm) {} node[black,fill opacity=1, draw opacity = 1] at (0cm,0.45cm) {} node[green,fill opacity=1] at (0cm,0.3cm) {};
[-2,0,-2]
\filldraw [draw=black,thick,fill opacity = .5, fill = Gainsboro,shift={(4.500000,0)}](0,0) -- (60:0.75cm) -- (0:0.75cm) --(0,0) node[blue] at (0.3cm,0.15cm) {} node[black] at (0.3cm,0.3cm) {} node[blue] at (0.3cm,0.45cm) {};
[-1,0,-2]
\filldraw [draw=black, thick,fill = Gainsboro, shift={(5.25,0)}](0,0) -- (60:0.75cm) -- (120:0.75cm) --(0,0) node[green,fill opacity=1] at (0cm,0.6cm) {} node[black,fill opacity=1, draw opacity = 1] at (0cm,0.45cm) {} node[green,fill opacity=1] at (0cm,0.3cm) {};
[-1,0,-1]
\filldraw [draw=black,thick,fill opacity = .5, fill = Gainsboro,shift={(5.250000,0)}](0,0) -- (60:0.75cm) -- (0:0.75cm) --(0,0) node[blue] at (0.3cm,0.15cm) {} node[black] at (0.3cm,0.3cm) {} node[blue] at (0.3cm,0.45cm) {};
[0,0,-1]
\filldraw [draw=black, thick,fill = Gainsboro, shift={(6,0)}](0,0) -- (60:0.75cm) -- (120:0.75cm) --(0,0) node[green,fill opacity=1] at (0cm,0.6cm) {} node[black,fill opacity=1, draw opacity = 1] at (0cm,0.45cm) {} node[green,fill opacity=1] at (0cm,0.3cm) {};
[0,0,0]
\filldraw [draw=black,thick,fill opacity = .5, fill = Goldenrod,shift={(6.000000,0)}](0,0) -- (60:0.75cm) -- (0:0.75cm) --(0,0) node[blue] at (0.3cm,0.15cm) {} node[black] at (0.3cm,0.3cm) {} node[blue] at (0.3cm,0.45cm) {};
[1,0,0]
\filldraw [draw=black, thick,fill = Goldenrod, shift={(6.75,0)}](0,0) -- (60:0.75cm) -- (120:0.75cm) --(0,0) node[green,fill opacity=1] at (0cm,0.6cm) {} node[black,fill opacity=1, draw opacity = 1] at (0cm,0.45cm) {} node[green,fill opacity=1] at (0cm,0.3cm) {};
[1,0,1]
\filldraw [draw=black,thick,fill opacity = .5, fill = Goldenrod,shift={(6.750000,0)}](0,0) -- (60:0.75cm) -- (0:0.75cm) --(0,0) node[blue] at (0.3cm,0.15cm) {} node[black] at (0.3cm,0.3cm) {} node[blue] at (0.3cm,0.45cm) {};
[2,0,1]
\filldraw [draw=black, thick,fill = blue, shift={(7.5,0)}](0,0) -- (60:0.75cm) -- (120:0.75cm) --(0,0) node[green,fill opacity=1] at (0cm,0.6cm) {} node[black,fill opacity=1, draw opacity = 1] at (0cm,0.45cm) {} node[green,fill opacity=1] at (0cm,0.3cm) {};
[2,0,2]
\filldraw [draw=black,thick,fill opacity = .5, fill = blue,shift={(7.500000,0)}](0,0) -- (60:0.75cm) -- (0:0.75cm) --(0,0) node[blue] at (0.3cm,0.15cm) {} node[black] at (0.3cm,0.3cm) {} node[blue] at (0.3cm,0.45cm) {};
[-2,1,-3]
\filldraw [draw=black,thick,fill opacity = .5, fill = Gainsboro,shift={(4.125000,0.649519)}](0,0) -- (60:0.75cm) -- (0:0.75cm) --(0,0) node[blue] at (0.3cm,0.15cm) {} node[black] at (0.3cm,0.3cm) {} node[blue] at (0.3cm,0.45cm) {};
[-1,1,-2]
\filldraw [draw=black,thick,fill opacity = .5, fill = Gainsboro,shift={(4.875000,0.649519)}](0,0) -- (60:0.75cm) -- (0:0.75cm) --(0,0) node[blue] at (0.3cm,0.15cm) {} node[black] at (0.3cm,0.3cm) {} node[blue] at (0.3cm,0.45cm) {};
[0,1,-2]
\filldraw [draw=black, thick,fill = Gainsboro, shift={(5.625,0.649519)}](0,0) -- (60:0.75cm) -- (120:0.75cm) --(0,0) node[green,fill opacity=1] at (0cm,0.6cm) {} node[black,fill opacity=1, draw opacity = 1] at (0cm,0.45cm) {} node[green,fill opacity=1] at (0cm,0.3cm) {};
[0,1,-1]
\filldraw [draw=black,thick,fill opacity = .5, fill = Gainsboro,shift={(5.625000,0.649519)}](0,0) -- (60:0.75cm) -- (0:0.75cm) --(0,0) node[blue] at (0.3cm,0.15cm) {} node[black] at (0.3cm,0.3cm) {} node[blue] at (0.3cm,0.45cm) {};
[1,1,-1]
\filldraw [draw=black, thick,fill = Gainsboro, shift={(6.375,0.649519)}](0,0) -- (60:0.75cm) -- (120:0.75cm) --(0,0) node[green,fill opacity=1] at (0cm,0.6cm) {} node[black,fill opacity=1, draw opacity = 1] at (0cm,0.45cm) {} node[green,fill opacity=1] at (0cm,0.3cm) {};
[1,1,0]
\filldraw [draw=black,thick,fill opacity = .5, fill = Goldenrod,shift={(6.375000,0.649519)}](0,0) -- (60:0.75cm) -- (0:0.75cm) --(0,0) node[blue] at (0.3cm,0.15cm) {} node[black] at (0.3cm,0.3cm) {} node[blue] at (0.3cm,0.45cm) {};
[2,1,0]
\filldraw [draw=black, thick,fill = blue, shift={(7.125,0.649519)}](0,0) -- (60:0.75cm) -- (120:0.75cm) --(0,0) node[green,fill opacity=1] at (0cm,0.6cm) {} node[black,fill opacity=1, draw opacity = 1] at (0cm,0.45cm) {} node[green,fill opacity=1] at (0cm,0.3cm) {};
[2,1,1]
\filldraw [draw=black,thick,fill opacity = .5, fill = Goldenrod,shift={(7.125000,0.649519)}](0,0) -- (60:0.75cm) -- (0:0.75cm) --(0,0) node[blue] at (0.3cm,0.15cm) {} node[black] at (0.3cm,0.3cm) {} node[blue] at (0.3cm,0.45cm) {};
[3,1,2]
\filldraw [draw=black,thick,fill opacity = .5, fill = blue,shift={(7.875000,0.649519)}](0,0) -- (60:0.75cm) -- (0:0.75cm) --(0,0) node[blue] at (0.3cm,0.15cm) {} node[black] at (0.3cm,0.3cm) {} node[blue] at (0.3cm,0.45cm) {};
[-2,2,-4]
\filldraw [draw=black,thick,fill opacity = .5, fill = Gainsboro,shift={(3.750000,1.29904)}](0,0) -- (60:0.75cm) -- (0:0.75cm) --(0,0) node[blue] at (0.3cm,0.15cm) {} node[black] at (0.3cm,0.3cm) {} node[blue] at (0.3cm,0.45cm) {};
[-1,2,-3]
\filldraw [draw=black,thick,fill opacity = .5, fill = Gainsboro,shift={(4.500000,1.29904)}](0,0) -- (60:0.75cm) -- (0:0.75cm) --(0,0) node[blue] at (0.3cm,0.15cm) {} node[black] at (0.3cm,0.3cm) {} node[blue] at (0.3cm,0.45cm) {};
[0,2,-2]
\filldraw [draw=black,thick,fill opacity = .5, fill = Gainsboro,shift={(5.250000,1.29904)}](0,0) -- (60:0.75cm) -- (0:0.75cm) --(0,0) node[blue] at (0.3cm,0.15cm) {} node[black] at (0.3cm,0.3cm) {} node[blue] at (0.3cm,0.45cm) {};
[1,2,-2]
\filldraw [draw=black, thick,fill = Gainsboro, shift={(6,1.29904)}](0,0) -- (60:0.75cm) -- (120:0.75cm) --(0,0) node[green,fill opacity=1] at (0cm,0.6cm) {} node[black,fill opacity=1, draw opacity = 1] at (0cm,0.45cm) {} node[green,fill opacity=1] at (0cm,0.3cm) {};
[1,2,-1]
\filldraw [draw=black,thick,fill opacity = .5, fill = Gainsboro,shift={(6.000000,1.29904)}](0,0) -- (60:0.75cm) -- (0:0.75cm) --(0,0) node[blue] at (0.3cm,0.15cm) {} node[black] at (0.3cm,0.3cm) {} node[blue] at (0.3cm,0.45cm) {};
[2,2,-1]
\filldraw [draw=black, thick,fill = Gainsboro, shift={(6.75,1.29904)}](0,0) -- (60:0.75cm) -- (120:0.75cm) --(0,0) node[green,fill opacity=1] at (0cm,0.6cm) {} node[black,fill opacity=1, draw opacity = 1] at (0cm,0.45cm) {} node[green,fill opacity=1] at (0cm,0.3cm) {};
[2,2,0]
\filldraw [draw=black,thick,fill opacity = .5, fill = blue,shift={(6.750000,1.29904)}](0,0) -- (60:0.75cm) -- (0:0.75cm) --(0,0) node[blue] at (0.3cm,0.15cm) {} node[black] at (0.3cm,0.3cm) {} node[blue] at (0.3cm,0.45cm) {};
[3,2,1]
\filldraw [draw=black,thick,fill opacity = .5, fill = blue,shift={(7.500000,1.29904)}](0,0) -- (60:0.75cm) -- (0:0.75cm) --(0,0) node[blue] at (0.3cm,0.15cm) {} node[black] at (0.3cm,0.3cm) {} node[blue] at (0.3cm,0.45cm) {};
[4,2,2]
\filldraw [draw=black,thick,fill opacity = .5, fill = blue,shift={(8.250000,1.29904)}](0,0) -- (60:0.75cm) -- (0:0.75cm) --(0,0) node[blue] at (0.3cm,0.15cm) {} node[black] at (0.3cm,0.3cm) {} node[blue] at (0.3cm,0.45cm) {};
[2,3,-2]
\filldraw [draw=black, thick,fill = Gainsboro, shift={(6.375,1.94856)}](0,0) -- (60:0.75cm) -- (120:0.75cm) --(0,0) node[green,fill opacity=1] at (0cm,0.6cm) {} node[black,fill opacity=1, draw opacity = 1] at (0cm,0.45cm) {} node[green,fill opacity=1] at (0cm,0.3cm) {};
\path [draw = red, very thick,  shift = {(5.25,0)}](-1.5,-2.59808) -- (1.5,2.59808);
\path [draw = red,very thick, shift = {(6,-0.649519)}](-3,0) -- (3,0);
\path [draw = red, very thick,  shift = {(5.25,0)}](-1.5,2.59808) -- (1.5,-2.59808);
\path [draw = red, very thick,  shift = {(6,0)}](-1.5,-2.59808) -- (1.5,2.59808);
\path [draw = red,very thick, shift = {(6,0)}](-3,0) -- (3,0);
\path [draw = red, very thick,  shift = {(6,0)}](-1.5,2.59808) -- (1.5,-2.59808);
\path [draw = red, very thick,  shift = {(6.75,0)}](-1.5,-2.59808) -- (1.5,2.59808);
\path [draw = red,very thick, shift = {(6,0.649519)}](-3,0) -- (3,0);
\path [draw = red, very thick,  shift = {(6.75,0)}](-1.5,2.59808) -- (1.5,-2.59808);
\path [draw = red, very thick,  shift = {(7.5,0)}](-1.5,-2.59808) -- (1.5,2.59808);
\path [draw = red,very thick, shift = {(6,1.29904)}](-3,0) -- (3,0);
\path [draw = red, very thick,  shift = {(7.5,0)}](-1.5,2.59808) -- (1.5,-2.59808);
\fill[fill=black, fill opacity=1.](6,0.) circle (0.0675cm);\end{tikzpicture}
  \caption{\small 
$\m$-minimal alcoves in the $\m$-Shi arrangement for $\m=1$ ($m=2$).
Dominant alcoves are shaded yellow (and/or blue, respectively).
}
  \label{fig;onetwoshi}
\end{figure}
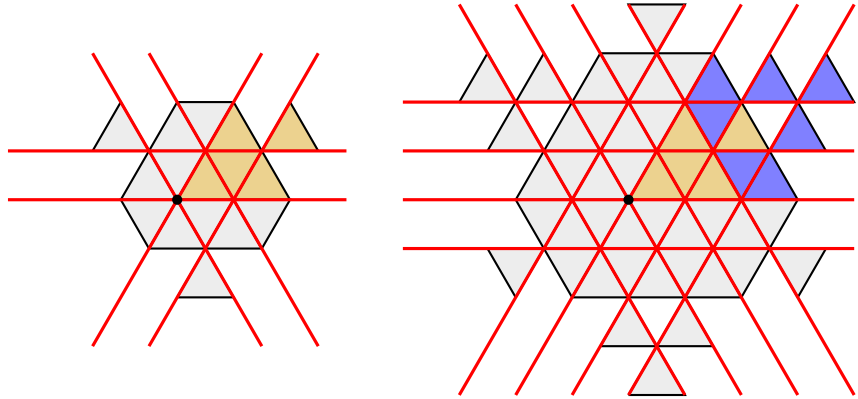

\begin{figure}[ht]

\begin{tikzpicture}[fill opacity=.5]
\tikzstyle{every node}=[font=\footnotesize]
[0,0,0]
\filldraw [draw=black,thick,fill opacity = .5, fill = Goldenrod,shift={(0.000000,0)}](0,0) -- (60:1cm) -- (0:1cm) --(0,0) node[blue] at (0.4cm,0.2cm) {} node[black] at (0.4cm,0.4cm) {} node[blue] at (0.4cm,0.6cm) {};
[1,0,0]
\filldraw [draw=black, thick,fill = Goldenrod, shift={(1,0)}](0,0) -- (60:1cm) -- (120:1cm) --(0,0) node[green,fill opacity=1] at (0cm,0.8cm) {} node[black,fill opacity=1, draw opacity = 1] at (0cm,0.6cm) {} node[green,fill opacity=1] at (0cm,0.4cm) {};
[1,0,1]
\filldraw [draw=black,thick,fill opacity = .5, fill = Goldenrod,shift={(1.000000,0)}](0,0) -- (60:1cm) -- (0:1cm) --(0,0) node[blue] at (0.4cm,0.2cm) {} node[black] at (0.4cm,0.4cm) {} node[blue] at (0.4cm,0.6cm) {};
[2,0,1]
\filldraw [draw=black, thick,fill = blue, shift={(2,0)}](0,0) -- (60:1cm) -- (120:1cm) --(0,0) node[green,fill opacity=1] at (0cm,0.8cm) {} node[black,fill opacity=1, draw opacity = 1] at (0cm,0.6cm) {} node[green,fill opacity=1] at (0cm,0.4cm) {};
[2,0,2]
\filldraw [draw=black,thick,fill opacity = .5, fill = blue,shift={(2.000000,0)}](0,0) -- (60:1cm) -- (0:1cm) --(0,0) node[blue] at (0.4cm,0.2cm) {} node[black] at (0.4cm,0.4cm) {} node[blue] at (0.4cm,0.6cm) {};
[1,1,0]
\filldraw [draw=black,thick,fill opacity = .5, fill = Goldenrod,shift={(0.500000,0.866025)}](0,0) -- (60:1cm) -- (0:1cm) --(0,0) node[blue] at (0.4cm,0.2cm) {} node[black] at (0.4cm,0.4cm) {} node[blue] at (0.4cm,0.6cm) {};
[2,1,0]
\filldraw [draw=black, thick,fill = blue, shift={(1.5,0.866025)}](0,0) -- (60:1cm) -- (120:1cm) --(0,0) node[green,fill opacity=1] at (0cm,0.8cm) {} node[black,fill opacity=1, draw opacity = 1] at (0cm,0.6cm) {} node[green,fill opacity=1] at (0cm,0.4cm) {};
[2,1,1]
\filldraw [draw=black,thick,fill opacity = .5, fill = Goldenrod,shift={(1.500000,0.866025)}](0,0) -- (60:1cm) -- (0:1cm) --(0,0) node[blue] at (0.4cm,0.2cm) {} node[black] at (0.4cm,0.4cm) {} node[blue] at (0.4cm,0.6cm) {};
[3,1,2]
\filldraw [draw=black,thick,fill opacity = .5, fill = blue,shift={(2.500000,0.866025)}](0,0) -- (60:1cm) -- (0:1cm) --(0,0) node[blue] at (0.4cm,0.2cm) {} node[black] at (0.4cm,0.4cm) {} node[blue] at (0.4cm,0.6cm) {};
[2,2,0]
\filldraw [draw=black,thick,fill opacity = .5, fill = blue,shift={(1.000000,1.73205)}](0,0) -- (60:1cm) -- (0:1cm) --(0,0) node[blue] at (0.4cm,0.2cm) {} node[black] at (0.4cm,0.4cm) {} node[blue] at (0.4cm,0.6cm) {};
[3,2,1]
\filldraw [draw=black,thick,fill opacity = .5, fill = blue,shift={(2.000000,1.73205)}](0,0) -- (60:1cm) -- (0:1cm) --(0,0) node[blue] at (0.4cm,0.2cm) {} node[black] at (0.4cm,0.4cm) {} node[blue] at (0.4cm,0.6cm) {};
[4,2,2]
\filldraw [draw=black,thick,fill opacity = .5, fill = blue,shift={(3.000000,1.73205)}](0,0) -- (60:1cm) -- (0:1cm) --(0,0) node[blue] at (0.4cm,0.2cm) {} node[black] at (0.4cm,0.4cm) {} node[blue] at (0.4cm,0.6cm) {};
\path [draw = red, very thick, draw opacity = 1, shift = {(0,0)}] (0,0) -- +(60:4);
\path [draw = red,very thick, draw opacity = 1] (0,0) -- +(4,0);
\path [draw = red,very thick,draw opacity = 1] (0,0) -- (0,0);
\path [draw = red, very thick, draw opacity = 1, shift = {(1,0)}] (0,0) -- +(60:4);
\path [draw = red,very thick, draw opacity = 1] (0.5,0.866025) -- +(4,0);
\path [draw = red,very thick,draw opacity = 1] (0.5,0.866025) -- (1,0);
\path [draw = red, very thick, draw opacity = 1, shift = {(2,0)}] (0,0) -- +(60:4);
\path [draw = red,very thick, draw opacity = 1] (1,1.73205) -- +(4,0);
\path [draw = red,very thick,draw opacity = 1] (1,1.73205) -- (2,0);
\begin{pgfonlayer}{foreground layer}
\node[black, shift={(0.592949,0.433013)}]{${\scriptscriptstyle e}$};
\end{pgfonlayer}{foreground layer}
\begin{pgfonlayer}{foreground layer}
\node[black, shift={(1.05,0.433013)}]{${\scriptscriptstyle 0}$};
\end{pgfonlayer}{foreground layer}
\begin{pgfonlayer}{foreground layer}
\node[black, shift={(1.54295,0.433013)}]{${\scriptscriptstyle 20}$};
\end{pgfonlayer}{foreground layer}
\begin{pgfonlayer}{foreground layer}
\node[black, shift={(2,0.433013)}]{${\scriptscriptstyle 120}$};
\end{pgfonlayer}{foreground layer}
\begin{pgfonlayer}{foreground layer}
\node[black, shift={(2.49295,0.433013)}]{${\scriptscriptstyle 0120}$};
\end{pgfonlayer}{foreground layer}
\begin{pgfonlayer}{foreground layer}
\node[black, shift={(1.04295,1.29904)}]{${\scriptscriptstyle 10}$};
\end{pgfonlayer}{foreground layer}
\begin{pgfonlayer}{foreground layer}
\node[black, shift={(1.5,1.29904)}]{${\scriptscriptstyle 210}$};
\end{pgfonlayer}{foreground layer}
\begin{pgfonlayer}{foreground layer}
\node[black, shift={(1.99295,1.29904)}]{${\scriptscriptstyle 1210}$};
\end{pgfonlayer}{foreground layer}
\begin{pgfonlayer}{foreground layer}
\node[black, shift={(2.94295,1.29904)}]{${\scriptscriptstyle 201210}$};
\end{pgfonlayer}{foreground layer}
\begin{pgfonlayer}{foreground layer}
\node[black, shift={(1.49295,2.16506)}]{${\scriptscriptstyle 0210}$};
\end{pgfonlayer}{foreground layer}
\begin{pgfonlayer}{foreground layer}
\node[black, shift={(2.44295,2.16506)}]{${\scriptscriptstyle 010210}$};
\end{pgfonlayer}{foreground layer}
\begin{pgfonlayer}{foreground layer}
\node[black, shift={(3.39295,2.16506)}]{${\scriptscriptstyle 12010210}$};
\end{pgfonlayer}{foreground layer}
\path [draw = lightgray, very thick,  shift = {(8,0)}](-1.5,-2.59808) -- (1.5,2.59808);
\path [draw = lightgray,very thick, shift = {(8,0)}](-3,0) -- (3,0);
\path [draw = lightgray, very thick,  shift = {(8,0)}](-1.5,2.59808) -- (1.5,-2.59808);
\path [draw = lightgray, very thick,  shift = {(9,0)}](-1.5,-2.59808) -- (1.5,2.59808);
\path [draw = lightgray,very thick, shift = {(8,0.866025)}](-3,0) -- (3,0);
\path [draw = lightgray, very thick,  shift = {(9,0)}](-1.5,2.59808) -- (1.5,-2.59808);
\path [draw = Goldenrod, very thick,  shift = {(8,0)}](0.5,2.59808) -- (2.5,-0.866025) -- (-1.5,-0.866025) -- cycle; 
\path [draw = blue, very thick,  shift = {(8,0)}](0.5,4.33013) -- (4,-1.73205) -- (-3,-1.73205) -- cycle; 
\fill[fill=Goldenrod, fill opacity=1.](8,0) circle (0.09cm) node [anchor = south] {$$};
\fill[fill=Goldenrod, fill opacity=1.](9.5,0.866025) circle (0.09cm) node [anchor = south] {$s_0$};
\fill[fill=Goldenrod, fill opacity=1.](9.5,-0.866025) circle (0.09cm) node [anchor = south] {$s_2s_0$};
\fill[fill=blue, fill opacity=1.](6.5,0.866025) circle (0.09cm)node [anchor = south] {$s_1s_2s_0$};
\fill[fill=blue, fill opacity=1.](9.5,2.59808) circle (0.09cm)node [anchor = south] {$s_0s_1s_2s_0$};
\fill[fill=Goldenrod, fill opacity=1.](8,1.73205) circle (0.09cm) node [anchor = south] {$s_1s_0$};
\fill[fill=blue, fill opacity=1.](8,-1.73205) circle (0.09cm)node [anchor = south] {$s_2s_1s_0$};
\fill[fill=Goldenrod, fill opacity=1.](6.5,-0.866025) circle (0.09cm) node [anchor = south] {$s_1s_2s_1s_0$};
\fill[fill=blue, fill opacity=1.](11,-1.73205) circle (0.09cm)node [anchor = south] {$s_2s_0s_1s_2s_1s_0$};
\fill[fill=blue, fill opacity=1.](11,0) circle (0.09cm)node [anchor = south] {$s_0s_2s_1s_0$};
\fill[fill=blue, fill opacity=1.](8,3.4641) circle (0.09cm)node [anchor = south] {$s_0s_1s_0s_2s_1s_0$};
\fill[fill=blue, fill opacity=1.](5,-1.73205) circle (0.09cm)node [anchor = south] {$s_1s_2s_0s_1s_0s_2s_1s_0$};
\draw[draw=black](8,0.) circle (0.11cm);
\end{tikzpicture}
  \caption{\small 
$w$ in the alcove  $w^{-1} \Afund$; and
the corresponding
$\beta = w (0, \ldots, 0)$
}
  \label{fig;rightleft}
\end{figure}


\section{
Narayana numbers
}

\label{sec-Narayana}
In this section we further  refine
the enumeration of $\n$-cores $\lambda$ which are also $(\m \n +1)$-cores.
We count by the number of residues $i$ such that $\lambda$ has
exactly $\m$  removable $i$-boxes. 
This refinement produces the $\m$-Narayana numbers, or
generalized Narayana numbers,  $N^m(k)$.
Hence we add another set to
Athanasiades'  list in Theorem 1.2 of \cite{A2005a} of combinatorial
objects counted by generalized Narayana numbers.
We recommend the reader see his Section 5.1 for an excellent discussion
of the $\m$-Shi arrangement in type $A$.

Recall
Equation \eqref{eq-wtlambda}
from Section \ref{sec-abacus} that
a partition has weight
${\mathrm wt}(\lambda) =
\Lambda_0 -  \sum_{(x,y) \in \lambda} \alpha_{(y-x) \, \mod n}.$

It is well-known that $s_i \lambda = \mu$ iff
$s_i {\mathrm wt}(\lambda) = {\mathrm wt} (\mu)$ 
where the action of $\affS$ on the weight lattice is given by
$$s_i (\gamma) = \gamma - \brac{\gamma}{ \alpha_i}\alpha_i.$$
We refer the reader to Chapters 5,6 of \cite{Kac}
for details on the affine weight lattice,
definition of $\Lambda_0$ and so on.
For computational purposes, all we need remind the reader of is that
$\brac{\Lambda_0}{\alpha_i} = \delta_{i,0}$ and
$\brac{\alpha_0}{\alpha_i}  = 2 \delta_{i,0} - \delta_{i,1} - \delta_{i,n-1}$.
\begin{remark}
In other words, Equation \eqref{eq-wtlambda} says that
if $s_i$ removes $k$ boxes (of residue $i$) from $\lambda$, or adds
$-k$ boxes to $\lambda$ to obtain $\mu$,
then ${\mathrm wt}(\mu) = 
s_i ({\mathrm wt}(\lambda)) = {\mathrm wt}(\lambda)  - k \alpha_i$.
\end{remark}

A straightforward rephrasing of Proposition \ref{prop;hyper} is then:

\begin{proposition}
\label{prop;narayana}
Let $\lambda$ be an $\n$-core, $k \in \Z_{>0}$, and $w \in \affS$ of 
minimal length such that $w \emptyset = \lambda$. 
Fix $0 \le i < n$.
The following are equivalent
\begin{enumerate}
\item
$s_i \lambda = \lambda \setminus\,  k $ boxes of residue $i$,
\item
$\brac{\vec \n (\lambda)}{\alpha_i} = -k$ for $i \neq 0$,
$\quad \brac{\vec \n (\lambda)}{\theta} = k+1$ for $i = 0$,
\item
$w^{-1} \Afund \subseteq \Hp{\alpha}{k},
\quad w^{-1} s_i \Afund \subseteq \Hn{\alpha}{k}$
where $w^{-1} (\alpha_i) = \alpha -k \delta$.
\end{enumerate}
\end{proposition}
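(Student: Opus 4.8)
The plan is to establish the three statements as equivalent by routing everything through (2), treated as a hub: I would prove (2)$\Leftrightarrow$(3) as an essentially verbatim transcription of Proposition \ref{prop;hyper}, and (1)$\Leftrightarrow$(2) via the weight-function description of the $\affS$-action on cores recorded in Section \ref{sec-abacus}.

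For (2)$\Leftrightarrow$(3), the key observation is that the two containments in (3) are exactly the hypothesis of Proposition \ref{prop;hyper} with $w$ there replaced by $w^{-1}$. Applying part (1) of that proposition to $w^{-1}$ produces the relation $w^{-1}(\alpha_i) = \alpha - k\delta$ already recorded in (3), and applying part (2) with $\beta = (w^{-1})^{-1}(0,\ldots,0) = w(0,\ldots,0) = \vec n(\lambda)$ yields $\brac{\vec n(\lambda)}{\alpha_i} = -k$. For $i \neq 0$ this is statement (2) verbatim. For $i = 0$ I would redo the short coordinate computation inside the proof of Proposition \ref{prop;hyper} in the case $\alpha_0 = \delta - \theta$: writing $w^{-1} = t_\gamma u$, one finds $w^{-1}(\alpha_0) = -u(\theta) + (1 + \brac{\gamma}{u(\theta)})\delta$, so that the defining level $k$ satisfies $\brac{\gamma}{u(\theta)} = -k-1$, whence $\brac{\vec n(\lambda)}{\theta} = -\brac{\gamma}{u(\theta)} = k+1$. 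The extra $\delta$ in $\alpha_0 = \delta - \theta$ is precisely the source of the ``$+1$'' shift, and it is the very same level-one $\theta$-bookkeeping already used in Corollaries \ref{cor;minimal} and \ref{cor;anderson}, so I would point to that rather than belabor it.

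For (1)$\Leftrightarrow$(2), I would invoke the Remark following Equation \eqref{eq-wtlambda}: $s_i$ removes exactly $k$ boxes of residue $i$ from $\lambda$ precisely when $s_i\,{\mathrm wt}(\lambda) = {\mathrm wt}(\lambda) - \brac{{\mathrm wt}(\lambda)}{\alpha_i}\alpha_i$ records the removal of $k$ such boxes, which pins $\brac{{\mathrm wt}(\lambda)}{\alpha_i} = -k$. It then remains to bridge the weight pairing to the $\vec n(\lambda)$ pairing. Since both $\lambda \mapsto {\mathrm wt}(\lambda)$ and $\lambda \mapsto \vec n(\lambda)$ are $\affS$-equivariant and are computed from the same $w$, I would verify the two identities $\brac{\vec n(\lambda)}{\alpha_i} = \brac{{\mathrm wt}(\lambda)}{\alpha_i}$ for $i \neq 0$ and $\brac{\vec n(\lambda)}{\theta} = 1 - \brac{{\mathrm wt}(\lambda)}{\alpha_0}$ for $i = 0$, the ``$+1$'' in the latter coming from the normalization $\brac{\Lambda_0}{\alpha_i} = \delta_{i,0}$. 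Substituting $\brac{{\mathrm wt}(\lambda)}{\alpha_i} = -k$ then returns exactly the values $-k$ and $k+1$ demanded in (2).

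The main obstacle --- really the only place demanding care --- is the uniform handling of the index $i = 0$, where $\alpha_0 = \delta - \theta$ forces the discrepancy between the $\theta$-pairing and the other $\alpha_i$-pairings, and where I must keep the orientation conventions consistent across the three pictures: removal of boxes corresponding to $k > 0$ on the core side, the affine (non-linear) action of $s_0$ on $V$ on the weight and $\vec n$ sides, and the level-one $\theta$-wall of $\Afund$ on the alcove side. Once that single offset is isolated and matched against the established bookkeeping of the earlier corollaries, every remaining step is formal.
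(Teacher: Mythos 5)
Your argument is correct and is exactly the route the paper intends: the paper offers no written proof beyond declaring the proposition ``a straightforward rephrasing of Proposition \ref{prop;hyper}'' (together with the Remark after Equation \eqref{eq-wtlambda}), and your proposal simply fills in that rephrasing --- (2)$\Leftrightarrow$(3) by applying Proposition \ref{prop;hyper} to $w^{-1}$ with $\beta = w(0,\ldots,0) = \vec n(\lambda)$, and (1)$\Leftrightarrow$(2) via $\wt(\lambda)$ and the normalization $\brac{\Lambda_0}{\alpha_i} = \delta_{i,0}$. Your explicit treatment of the $i=0$ case, where $\alpha_0 = \delta - \theta$ produces the shift $\brac{\vec n(\lambda)}{\theta} = k+1$, is in fact more careful than the paper's own statement of Proposition \ref{prop;hyper}(2), which glosses over exactly that offset.
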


When we rephrase Corollary \ref{cor;minimal} in this context, it says:

Suppose $\lambda = w \emptyset$ is the $\n$-core associated to the dominant
alcove $\A = w^{-1} \Afund$.
Then $\A$ is $\m$-minimal iff whenever $\lambda$ has exactly $k$ removable
boxes of residue $i$ then $k \le m$.  In this case, $\lambda$ is also
an $(\m\n + 1)$-core.

\begin{example}
\label{ex;hyp_inv_two}
Here we continue the example of Example \ref{ex;hyp_inv_one}.
Consider the $3$-core $\lambda = (5,3,2,2,1,1) = w \emptyset$ for
$w = s_1 s_2 s_0 s_1 s_2 s_1 s_0 
 =s_2 t_{(-2,0,2)}= t_{(-2,2,0)} s_2$.
Let $\A = w^{-1} \Afund$, which is pictured in Figure \ref{fig;hyper_inv},
and recall
$w^{-1}(\alpha_0) = {\color{Blue} -\alpha_1 + 3 \delta}$
corresponding to 
$\A \subseteq {\color{Blue} \Hp{\alpha_1}{2}}$ and 
$\A \subseteq {\color{Blue} \Hp{\alpha_1}{1}}$, but $\A \subseteq \Hn{\alpha_1}{3}$.
Also
$w^{-1}(\alpha_1) = {\color{Red}  \theta - 4 \delta}$
corresponding to
$\A \subseteq {\color{Red} \Hp{\theta}{k}}$ for
$1\le k \le 4$, and
$w^{-1}(\alpha_2) = {\color{LimeGreen} -\alpha_2+ 2 \delta}$
corresponds to
$\A \subseteq  {\color{LimeGreen} \Hp{\alpha_2}{1}}.$

This data  also 
corresponds to $\lambda$'s
$3$ addable ${\color{Blue} 0}$-boxes,
$4$ removable ${\color{Red} 1}$-boxes,
$2$ addable ${\color{LimeGreen} 2}$-boxes, 
as explained in Proposition \ref{prop;narayana}.

\begin{figure}[ht] 
\label{fig;lambda-one}
\begin{center}
\parbox[t]{2in}{
$\lambda = $
\tableau{\mbox{0} & \mbox{1} & \mbox{2} & \mbox{0} & {\color{Red} \mbox{1} }\\
         \mbox{2} & \mbox{0}& {\color{Red}  \mbox{1} } \\
         \mbox{1} & \mbox{2} \\
         \mbox{0} &{\color{Red}  \mbox{1} }\\
         \mbox{2} \\
         {\color{Red} \mbox{1} }\\
} }
\end{center}
\end{figure}
\end{example}
\begin{example}
\label{ex;hyp_inv_four}
Consider the $4$-core $\lambda = (5,2,2,1,1,1) = w \emptyset$ for
$w = s_3 s_0 s_1 s_2 s_3 s_2 s_1 s_0$.
Let $\A = w^{-1} \Afund$.
Note $w^{-1} = s_3 t_{(2,0,-2,0)}$ and $\vec n (\lambda) = (2,0,-2,0)$.
Then 
$$\A \subseteq \Hp{\alpha_3}{1} \cap \Hn{\alpha_2+\alpha_3}{2}
\cap \Hn{\alpha_1}{2} \cap \Hp{\alpha_1 +\alpha_2}{2},$$
which corresponds to $\lambda$'s $1$ removable $0$-box,
$2$ addable $1$-boxes, $2$ addable $2$-boxes, $2$ removable $3$-boxes. 
Note $w^{-1}(\alpha_0) = \alpha_3-\delta,$
$w^{-1}(\alpha_1) =-(\alpha_2+\alpha_3) + 2 \delta$,
$w^{-1}(\alpha_2) =-\alpha_1 + 2 \delta$,
$w^{-1}(\alpha_3) =(\alpha_1+\alpha_2) - 2 \delta$.

\begin{figure}[ht] 
\label{fig;lambda-two}
\begin{center}
\parbox[t]{2in}{
$\lambda = $
\tableau{\mbox{0} & \mbox{1} & \mbox{2} & \mbox{3} & \mbox{0} \\
         \mbox{3} & \mbox{0} \\
         \mbox{2} & \mbox{3} \\
         \mbox{1} \\
         \mbox{0} \\
         \mbox{3} \\
} }
\end{center}
\end{figure}
\end{example}

We could conversely construct $\lambda$ from
the number of removable/addable boxes of each residue,
since this data describes $\wt(\lambda)$ and hence determines $\lambda$.
However, in practice executing the bijection of Section \ref{sec-bij},
one may as well find $w$, for instance, as follows.
Given an $\n$-core $\lambda$, determine $\vec n (\lambda)$ as described
in Section \ref{sec-abacus}.  Let $u \in \Sn$ be of minimal length such that
$u(-\vec n (\lambda))$ is dominant.  Then $w = u^{-1} t_{\vec n (\lambda)}$
and $w^{-1} \Afund = t_{-\vec n (\lambda)} u \Afund
\subseteq
\Hn{u^{-1}(\theta)}{1- \brac{\vec n (\lambda)}{\theta}} \cap
\bigcap_{i=1}^{n-1} \Hp{u^{-1}(\alpha_i)}{-\brac{\vec n (\lambda)}{\alpha_i}}$
which describes where $w^{-1} \Afund$ is located in $V$.
This process gives a slightly more constructive version of the bijection
in Theorem \ref{thm;main}.

We also note that one can easily read off the coordinates $k(w^{-1}, \alpha)$
that Shi uses in \cite{Shi} from the data above,
describing where $w^{-1} \Afund$ is located in $V$ with respect to the (affine)
root hyperplanes.

\subsection{A refinement}
Proposition \ref{prop;narayana} thus gives us
another combinatorial way to count the $\m$-Narayana numbers, as in
\cite{A2005a}. 
Recall the $k^{th}$  {\em $\m$-Narayana number}
 $N^m(k)$ counts how many dominant regions
of the $\m$-Shi arrangement
have exactly $k$ hyperplanes $\Hak{\alpha}{m}$
separating them from $\Afund$ such that $\Hak{\alpha}{m}$ contains a wall
of the region.

In other words, for fixed $k$, we count how many $\m$-minimal alcoves $\A 
= w^{-1} \Afund$
satisfy that for exactly $k$ positive roots $\alpha$, there exists an $i$ 
such that  $w^{-1} \Afund  \subseteq \Hp{\alpha}{m}$
but $w^{-1} s_i \Afund  \subseteq \Hn{\alpha}{m}$.
It is clear that
$$\sum_{k\ge 0} N^m(k) = \m \text{-Catalan number} 
$$
since each dominant $\m$-minimal alcove gets counted once.

By Proposition \ref{prop;narayana} above, $N^m(k)$ equivalently counts
how many $\n$-cores $\lambda$ that are also $(\m\n + 1)$-cores
 have exactly $k$ distinct residues $i$ such that $\lambda$
has precisely $\m$  removable $i$-boxes.
See Example \ref{ex;hyp_inv_two} below.

\begin{corollary}
\label{cor;narayana}
Let $N^m(k)$ denote the $\m$-Narayana number of type $A_{n-1}$.
Then 
\begin{align*}
N^m(k)  =  
|  \{
& \lambda \mid \text{$\lambda$ is an $\n$-core  and
$(\m\n + 1)$-core and $\exists K \subseteq \Z/\n \Z$} \\
& \text{
with $|K| = k$ such that $\lambda$ has exactly $\m$ removable boxes} \\
&\text{
of residue $i$ iff $i \in K$} \} |.
\end{align*}
\end{corollary}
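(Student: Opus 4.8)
The plan is to deduce the corollary from Proposition~\ref{prop;narayana} together with the main bijection of Theorem~\ref{thm;main}. By Theorem~\ref{thm;main} the map $\Phi\colon w\emptyset\mapsto w^{-1}\Afund$ (with $w$ of minimal length in $w\Sn$) is a bijection from the $\n$-cores that are also $(\m\n+1)$-cores onto the dominant $\m$-minimal alcoves. So it is enough to show that, for each such $\lambda=w\emptyset$, the number of positive roots $\alpha$ for which $w^{-1}\Afund$ has a wall on $\Hak{\alpha}{\m}$ (in the sense used to define $N^m(k)$) equals $|K|$, where $K=\{\,i\mid\lambda\text{ has exactly }\m\text{ removable }i\text{-boxes}\,\}$. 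Granting this, an alcove (equivalently a core, via $\Phi$) is counted by $N^m(k)$ exactly when that common value is $k$, which is the assertion.

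First I would record the local dictionary between walls and residues. The alcove $w^{-1}\Afund$ is a simplex with exactly $\n$ walls, one for each $0\le i<\n$, the $i$-th being the facet separating $w^{-1}\Afund$ from $w^{-1}s_i\Afund$; by Proposition~\ref{prop;hyper} this wall lies on $\Hak{\alpha}{k}$ where $w^{-1}(\alpha_i)=\alpha-k\delta$. These $\n$ hyperplanes are pairwise distinct, so a hyperplane $\Hak{\alpha}{\m}$ carrying a wall determines a unique index $i$. Taking $k=\m$ in the equivalence (1)$\Leftrightarrow$(3) of Proposition~\ref{prop;narayana}, the statement ``$\lambda$ has exactly $\m$ removable $i$-boxes'' is literally the statement ``$w^{-1}\Afund\subseteq\Hp{\alpha}{\m}$ and $w^{-1}s_i\Afund\subseteq\Hn{\alpha}{\m}$ with $w^{-1}(\alpha_i)=\alpha-\m\delta$''.

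The one real point to verify is the sign of $\alpha$, since $N^m(k)$ counts \emph{positive} roots whereas Proposition~\ref{prop;narayana} allows $\alpha\in\roots$. Here I would use dominance: as $w^{-1}\Afund$ lies in the dominant chamber, $\brac{v}{\beta}\ge0$ for every $v\in w^{-1}\Afund$ and every $\beta\in\pos$. Hence $w^{-1}\Afund\subseteq\Hp{\alpha}{\m}$ with $\m>0$ is impossible for $\alpha<0$, as it would force $\brac{v}{-\alpha}\le-\m<0$ with $-\alpha\in\pos$. Thus every removable-box wall at level $\m$ has $\alpha>0$, and conversely any positive-root wall at level $\m$ produces, through Proposition~\ref{prop;narayana}, a residue with exactly $\m$ removable boxes. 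The assignment $i\mapsto\alpha$ (where $w^{-1}(\alpha_i)=\alpha-\m\delta$) is then a bijection from $K$ onto the set of positive roots carrying a level-$\m$ wall: it is injective because distinct simple roots $\alpha_i$ have distinct images under the bijection $w^{-1}$, and surjective by the unique-index observation above.

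Combining these steps, for each dominant $\m$-minimal alcove the number of positive roots with a level-$\m$ wall equals $|K|$, so the defining count of $N^m(k)$ matches the count of cores with $|K|=k$ on the nose, and Theorem~\ref{thm;main} transports one count to the other. I expect the only genuine obstacle to be this sign bookkeeping; the potentially awkward residue $i=0$ needs no separate treatment, since it is already folded into Proposition~\ref{prop;narayana} via the condition $\brac{\vec\n(\lambda)}{\theta}=k+1$.
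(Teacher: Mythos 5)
Your proposal is correct and follows essentially the same route as the paper: the paper also deduces the corollary by combining the bijection of Theorem~\ref{thm;main} with the dictionary of Proposition~\ref{prop;narayana} between level-$\m$ walls of the $\m$-minimal alcove and residues with exactly $\m$ removable boxes. Your explicit check that dominance forces $\alpha>0$ and that $i\mapsto\alpha$ is a bijection onto the level-$\m$ walls only spells out bookkeeping the paper leaves implicit.
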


\begin{example}
\label{ex;narayana}
For $\n=3$, $\m=2$, the $\m$-Catalan number is $12 = 5+6+1$. 
\begin{align*}
N^2(0) =& 5 =
| \{ \emptyset, (1), (2), (1,1), (3,1,1) \} |
\\
N^2(1) =& 6 =
| \{  (3,1), (2,1,1), (2,2,1,1), (4,2), (5,3,1,1), (4,2,2,1,1) \} |
\\
N^2(2) =& 1 =
| \{ 
(6,4,2,2,1,1) \} |
\end{align*}
\end{example}

\bibliographystyle{plain}

\end{document}